\newcommand{\Title}[1]{\bigskip\bigskip\centerline{\bf #1}\bigskip}
\newcommand{\Author}[1]{\medskip\centerline{ \it #1}}
\newcommand{\Affiliation}[1]{\medskip\centerline{#1}}
\newcommand{\Email}[1]{\medskip\centerline{#1}\bigskip}
\begin{document}

\newcommand{\N}{\mbox {$\mathbb N $}}
\newcommand{\Z}{\mbox {$\mathbb Z $}}
\newcommand{\Q}{\mbox {$\mathbb Q $}}
\newcommand{\R}{\mbox {$\mathbb R $}}
\newcommand{\lo }{\longrightarrow }
\newcommand{\ul}{\underleftarrow }
\newcommand{\rl}{\underrightarrow }
\newcommand{\rs }{\rightsquigarrow }
\newcommand{\ra }{\rightarrow } 
\newcommand{\dd }{\rightsquigarrow } 
\newcommand{\rars }{\Leftrightarrow }
\newcommand{\ol }{\overline }
\newcommand{\la }{\langle }
\newcommand{\tr }{\triangle }
\newcommand{\xr }{\xrightarrow }
\newcommand{\de }{\delta }
\newcommand{\pa }{\partial }
\newcommand{\LR }{\Longleftrightarrow }
\newcommand{\Ri }{\Rightarrow }
\newcommand{\va }{\varphi }
\newcommand{\Den}{{\rm Den}\,}
\newcommand{\Ker}{{\rm Ker}\,}
\newcommand{\Reg}{{\rm Reg}\,}
\newcommand{\Fix}{{\rm Fix}\,}
\newcommand{\Sup}{{\rm Sup}\,}
\newcommand{\Inf}{{\rm Inf}\,}
\newcommand{\Img}{{\rm Im}\,}
\newcommand{\Id}{{\rm Id}\,}
\newcommand{\ord}{{\rm ord}\,}

\newtheorem{theorem}{Theorem}[section]
\newtheorem{lemma}[theorem]{Lemma}
\newtheorem{proposition}[theorem]{Proposition}
\newtheorem{corollary}[theorem]{Corollary}
\newtheorem{definition}[theorem]{Definition}
\newtheorem{example}[theorem]{Example}
\newtheorem{examples}[theorem]{Examples}
\newtheorem{xca}[theorem]{Exercise}
\theoremstyle{remark}
\newtheorem{remark}[theorem]{Remark}
\newtheorem{remarks}[theorem]{Remarks}
\numberwithin{equation}{section}

\def\leftmark{L.C. Ciungu}

\Title{IMPLICATIVE-ORTHOMODULAR LATTICES} 
\title[Implicative-orthomodular lattices]{}
                                                                           
\Author{\textbf{LAVINIA CORINA CIUNGU}}
\Affiliation{Department of Mathematics} 
\Affiliation{St Francis College}
\Affiliation{180 Remsen Street, Brooklyn Heights, NY 11201-4398, USA}
\Email{lciungu@sfc.edu}

\begin{abstract} 
Based on implicative involutive BE algebras, we redefine the orthomodular lattices, by introducing the 
notion of implicative-orthomodular lattices, and we study their properties. 
We characterize these algebras, proving that the implicative-orthomodular lattices are quantum-Wajsberg algebras.  
We also define and characterize the implicative-modular algebras as a subclass of implicative-orthomodular 
lattices. 
The orthomodular softlattices and orthomodular widelattices are also redefined, by introducing the notions of implicative-orthomodular softlattices and implicative-orthomodular widelattices.
Finally, we prove that the implicative-orthomodular softlattices are equivalent to implicative-orthomodular 
lattices and that the implicative-orthomodular widelattices are special cases of quantum-Wajsberg algebras. \\

\noindent
\textbf{Keywords:} {implicative-orthomodular lattice, implicative-orthomodular softlattice, implicative-orthomodular widelattice, implicative-orthomodular algebra, quantum-Wajsberg algebra, pre-Wajsberg algebra, meta-Wajsberg algebra} \\
\textbf{AMS classification (2020):} 06C15, 03G25, 06A06, 81P10
\end{abstract}

\maketitle

\section{Introduction} 

Beside the classical and non-classical logics, there exist the quantum logics.
As algebraic structures connected with quantum logics we mention the following algebras: bounded involutive lattices,  De Morgan algebras, ortholattices, orthomodular lattices, MV algebras, quantum-MV algebras. 
Orthomodular lattices (particular ortholattices) generalize the Boolean algebras.
They have arisen in the study of quantum logic, that is, the logic which supports quantum mechanics and which does not conform to classical logic (see \cite{Ior35}). \\
The connections between algebras of logic/algebras and quantum algebras were clarified by A. Iorgulescu in 
\cite{Ior30, Ior31, Ior35}: it was proved that the quantum algebras belong, in fact, to the ``world" of involutive unital commutative magmas. 
These connections were established by redefining equivalently the bounded involutive 
lattices and De Morgan algebras as involutive m-MEL algebras and the ortholattices, the MV, the quantum MV and 
the Boolean algebras as involutive m-BE algebras, verifying some properties, and then putting all of them on
the involutive ``big map". \\
The quantum-MV algebras (or QMV algebras) were introduced by R. Giuntini in \cite{Giunt1} 
as non-lattice generalizations of MV algebras and as non-idempotent generalizations of orthomodular lattices. 
These structures were intensively studied by R. Giuntini (\cite{Giunt2, Giunt3, Giunt4, Giunt5, Giunt6}), 
A. Dvure\v censkij and S. Pulmannov\'a (\cite{DvPu}), R. Giuntini and S. Pulmannov\'a (\cite{Giunt7}) and by 
A. Iorgulescu in \cite{Ior30, Ior31, Ior32, Ior33, Ior34, Ior35, Ior37}. 
An extensive study on the orthomodular structures as quantum logics can be found in \cite{Ptak}. \\ 
Based on the involutive m-BE algebras, A. Iorgulescu redefined the quantum-MV algebras, and she generalized 
these algebras, by introducing three new algebras: orthomodular, pre-MV and meta-MV algebras. 
The implicative-ortholattices (also particular involutive BE algebras, introduced and studied in 
\cite{Ior30, Ior35}) are definitionally equivalent to ortholattices (redefined as particular involutive m-BE algebras in \cite{Ior30, Ior35}), and the implicative-Boolean algebras (introduced by A. Iorgulescu in 2009, also as particular involutive BE algebras, namely as particular involutive BCK algebras) are definitionally equivalent to Boolean algebras (redefined as particular involutive m-BE algebras, namely as particular (involutive) m-BCK algebras in \cite{Ior35}). 
The orthomodular lattices were also intensively studied by A. Iorgulescu (\cite{Ior32}) based on involutive m-BE algebras, and two generalizations of orthomodular lattices were introduced, orthomodular softlattices and 
orthomodular widelattices. \\
We redefined in \cite{Ciu78} the quantum-MV algebras as involutive BE algebras, by introducing and studying the 
notion of quantum-Wajsberg algebras. It was proved that the quantum-Wajsberg algebras are equivalent to quantum-MV algebras and that the Wajsberg algebras are both quantum-Wajsberg algebras and commutative quantum-B algebras 
(see also \cite{Ciu79, Ciu80}). 
We also redefined in \cite{Ciu81} the orthomodular algebras, by defining the implicative-orthomodular algebras, 
proving that the quantum-Wajsberg algebras are particular cases of these structures. 
We characterized the implicative-orthomodular algebras and we gave conditions for implicative-orthomodular algebras 
to be quantum-Wajsberg algebras. 
We also introduced and studied in \cite{Ciu82} the pre-Wajsberg algebras and the meta-Wajsberg algebras as  generalizations of quantum-Wajsberg algebras. \\
In this paper, we define and study the implicative-orthomodular lattices, we give characterizations of implicative-orthomodular lattices, and we show that they are quantum-Wajsberg algebras and implicative-orthomodular algebras. 
We also introduce the implicative-modular algebras, and we prove that they form a proper subclass of implicative-orthomodular lattices. 
Based on involutive BE algebras, we redefine the orthomodular softlattices and orthomodular widelattices, 
by introducing the notions of implicative-orthomodular softlattices and implicative-orthomodular widelattices. 
We prove that the implicative-orthomodular softlattices are equivalent to implicative-orthomodular lattices, 
and that  the implicative-orthomodular widelattices are special cases of quantum-Wajsberg algebras.  
%We also show that the implicative-orthomodular softlattices are defitionally equivalent to orthomodular 
%softlattices, and the implicative-orthomodular widelattices are defitionally equivalent to orthomodular 
%widelattices. 
Additionally, we study and characterize the implicative involutive BE algebras, and based on these algebras 
we define the implicative-orthosoftlattices and implicative-orthowidelattices, and we investigate 
their relationships with the implicative-ortholattices.

$\vspace*{1mm}$

\section{Preliminaries}

In this section, we recall some basic notions and results regarding lattices, BE algebras and m-BE algebras, 
as well as  quantum-Wajsberg, implicative-orthomodular, pre-Wajsberg and meta-Wajsberg algebras. 

\begin{definition} \label{iol-10} \emph{(\cite{PadRud}) (see also \cite{Ior30})} \\
\emph{
$(1)$ A \emph{lattice} is an algebra $(X,\wedge,\vee)$ such that the following properties hold, for all $x,y,z\in X$:\\
$(L_1)$ $x\wedge x=x$ and $x\vee x=x$,                  $\hspace*{5.00cm}$ (idempotency) \\
$(L_2)$ $x\wedge y=y\wedge x$ and $x\vee y=y\vee x$,    $\hspace*{3.80cm}$ (commutativity) \\
$(L_3)$ $x\wedge (y\wedge z)=(x\wedge y)\wedge z$ and $x\vee (y\vee z)=(x\vee y)\vee z$, 
                                                                           $\hspace*{0.1cm}$ (associativity) \\ 
$(L_4)$ $x\wedge (x\vee y)=x$ and $x\vee (x\wedge y)=x$. $\hspace*{3.15cm}$ (absorption) \\
$(2)$ A \emph{bounded lattice} is an algebra $(X,\wedge,\vee,0,1)$ such that $(X,\wedge,\vee)$ is a lattice 
and the following holds for any $x\in X$: \\
$(L_5)$ $1\wedge x=x$ and $0\vee x=x$. \\               %$\hspace*{5.10cm}$ (nullary operations) \\
$(3)$ An \emph{orthollattice} (OL for short) is an algebra $(X,\wedge,\vee,^{'},0,1)$ such that $(X,\wedge,\vee,0,1)$ 
is a bounded lattice and the unary operation $^{'}$ satisfies the following properties, for all $x,y\in X$: \\
$(L_6)$ $(x^{'})^{'}=x$,                                    $\hspace*{7.50cm}$ (double negation) \\
$(L_7)$ $(x\vee y)^{'}=(x^{'}\wedge y^{'})$ and $(x\wedge y)^{'}=(x^{'}\vee y^{'})$,     
                                                                           $\hspace*{1.80cm}$ (De Morgan laws) \\
$(L_8)$ $x\wedge x^{'}=0$,                         $\hspace*{7.35cm}$ (noncontradiction principle) \\
$(L_9)$ $x\vee x^{'}=1$.                           $\hspace*{7.30cm}$ (excluded midle principle) 
}
\end{definition}

\begin{remarks} \label{iol-10-10}
$(1)$ It suffices to postulate only one of the De Morgan laws. 
For instance, if the first De Morgan law holds, then by the double negation law we have: 
$(x\wedge y)^{'}=((x^{'})^{'}\wedge (y^{'})^{'})^{'}=((x^{'}\vee y^{'})^{'})^{'}=x^{'}\vee y^{'}$, i.e. the second De Morgan law is also satisfied. \\
$(2)$ According to \cite{PadRud}, it was Dedekind who proved that the idempotency laws are consequences of 
absorption laws: 
$x\vee x=x\vee (x\wedge (x\vee x))=x$ and $x\wedge x=x\wedge (x\vee (x\wedge x))=x$. \\
$(3)$ For any $x\in X$, $1\wedge x=x$ $\rars$ $1\vee x=1$ and $0\vee x=x$ $\rars$ $0\wedge x=0$. \\
Indeed, if $1\wedge x=x$, then by absorption law, $1\vee x=1\vee (1\wedge x)=1$, and  
$1\vee x=1$ implies $1\wedge x=1\wedge (1\vee x)=1$. 
Similarly, from $0\vee x=x$ we get $0\wedge x=0\wedge (0\vee x)=0$ and 
$0\wedge x=0$ implies $0\vee x=0\vee (0\wedge x)=0$. \\
$(4)$ A. Iorgulescu proved in \cite[Th. 3.2]{Ior37} that the system of axioms $\{L_1, L_2, L_3,L_4\}$ 
for a lattice is equivalent to the system of axioms $\{L_1, L_2, L_3,L_4^{'}\}$, where: \\
$(L_4^{'})$ $x\wedge (x\vee x\vee y)=x$ and $x\vee (x\wedge x\wedge y)=x$. $\hspace*{1.80cm}$ 
(independent absorption) 
\end{remarks}

A. Iorgulescu introduced and studied in \cite{Ior37} the notions of softlattices, orthosoftlattices, widelattices and 
orthowidelattices as well as equivalent definitions for orthosoftlattices and orthowidelattices: \\
$-$ A \emph{softlattice} is an algebra $(X,\wedge,\vee)$ satisfying axioms $(L_1)$, $(L_2)$, $(L_3)$. 
A \emph{bounded softlattice} is an algebra $(X,\wedge,\vee,0,1)$ such that $(X,\wedge,\vee)$ is a softlattice and 
the elements $0$ and $1$ satisfy axiom $(L_5)$ (\cite[Def. 3.3]{Ior37}); \\
$-$ An \emph{orthosoftlattice} is an algebra $(X,\wedge,\vee,^*,0,1)$ such that 
$(X,\wedge,\vee,0,1)$ is a bounded softlattice satisfying axioms $(L_6)$-$(L_9)$ (\cite[Def. 5.1]{Ior37}); \\ 
$-$ A \emph{widelattice} is an algebra $(X,\wedge,\vee)$ satisfying axioms $(L_2)$, $(L_3)$, $(L_4^{'})$. 
A \emph{bounded widelattice} is an algebra $(X,\wedge,\vee,0,1)$ such that $(X,\wedge,\vee)$ is a widelattice and 
the elements $0$ and $1$ satisfy axiom $(L_5)$ (\cite[Def. 3.9]{Ior37}); \\
$-$ An \emph{orthowidelattice} (OWL for short) is an algebra $(X,\wedge,\vee,^*,0,1)$ such that 
$(X,\wedge,\vee,0,1)$ is a bounded widelattice satisfying axioms $(L_6)$-$(L_9)$ (\cite[Def. 5.6]{Ior37}). \\
An involutive m-BE algebra $(X,\odot,^*,1)$ is called: \\
$-$ an \emph{orthosoftlattice} (OSL for short), if it satisfies axiom: for any $x\in X$, \\
$(G)$ $x\odot x=x$ (\cite[Def. 5.3]{Ior37}); \\
$-$ an \emph{orthowidelattice} (OWL for short), if it satisfies axiom: for any $x,y\in X$, \\ 
$(m$-$Pabs$-$i)$ $x\odot (x\oplus x\oplus y)=x$ (\cite[Def. 5.8]{Ior37}). \\

\emph{BE algebras} were introduced in \cite{Kim1} as algebras $(X,\ra,1)$ of type $(2,0)$ satisfying the 
following conditions, for all $x,y,z\in X$: 
$(BE_1)$ $x\ra x=1;$ 
$(BE_2)$ $x\ra 1=1;$ 
$(BE_3)$ $1\ra x=x;$ 
$(BE_4)$ $x\ra (y\ra z)=y\ra (x\ra z)$. 
A relation $\le$ is defined on $X$ by $x\le y$ iff $x\ra y=1$. 
A BE algebra $X$ is \emph{bounded} if there exists $0\in X$ such that $0\le x$, for all $x\in X$. 
In a bounded BE algebra $(X,\ra,0,1)$ we define $x^*=x\ra 0$, for all $x\in X$. 
A bounded BE algebra $X$ is called \emph{involutive} if $x^{**}=x$, for any $x\in X$. 

\begin{lemma} \label{qbe-10} $\rm($\cite{Ciu78}$\rm)$ 
Let $(X,\ra,1)$ be a BE algebra. The following hold for all $x,y,z\in X$: \\
$(1)$ $x\ra (y\ra x)=1;$ 
$(2)$ $x\le (x\ra y)\ra y$. \\
If $X$ is bounded, then: \\
$(3)$ $x\ra y^*=y\ra x^*;$ 
$(4)$ $x\le x^{**}$. \\
If $X$ is involutive, then: \\
$(5)$ $x^*\ra y=y^*\ra x;$ 
$(6)$ $x^*\ra y^*=y\ra x;$ 
$(7)$ $(x\ra y)^*\ra z=x\ra (y^*\ra z);$ \\
$(8)$ $x\ra (y\ra z)=(x\ra y^*)^*\ra z;$    
$(9)$ $(x^*\ra y)^*\ra (x^*\ra y)=(x^*\ra x)^*\ra (y^*\ra y)$.  
\end{lemma}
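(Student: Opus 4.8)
The plan is to establish the nine identities in order, bootstrapping each from the BE axioms and the previously proved parts; the workhorse throughout is the exchange axiom $(BE_4)$, assisted by $(BE_1)$, $(BE_2)$ and the definition $x^*=x\ra 0$. None of the parts needs a clever construction: every one collapses by a short rewriting once the earlier parts are in hand.

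For $(1)$ I would apply $(BE_4)$ to turn $x\ra (y\ra x)$ into $y\ra (x\ra x)$, then use $(BE_1)$ and $(BE_2)$ to reduce it to $y\ra 1=1$. For $(2)$, unfolding $x\le (x\ra y)\ra y$ into the equation $x\ra ((x\ra y)\ra y)=1$, I would swap antecedents by $(BE_4)$ to reach $(x\ra y)\ra (x\ra y)$, which is $1$ by $(BE_1)$. In the bounded case, $(3)$ is just $(BE_4)$ after writing $y^*=y\ra 0$ and $x^*=x\ra 0$, since both sides become $x\ra (y\ra 0)=y\ra (x\ra 0)$; and $(4)$ is the instance $y=0$ of $(2)$, because $(x\ra 0)\ra 0=x^{**}$.

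Once involutivity is available, $(5)$ and $(6)$ follow from $(3)$ by inserting double stars and using $z^{**}=z$: for $(6)$ take $(3)$ with the second argument $y$ to get $x^*\ra y^*=y\ra x^{**}=y\ra x$, and for $(5)$ write $y=(y^*)^*$ and apply $(3)$ to $x^*\ra (y^*)^*$. For $(7)$ I would chain $(5)$, then $(BE_4)$, then $(5)$ again: $(x\ra y)^*\ra z=z^*\ra (x\ra y)=x\ra (z^*\ra y)=x\ra (y^*\ra z)$. Part $(8)$ is then simply the substitution $y\mapsto y^*$ in $(7)$ together with $(y^*)^*=y$.

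The only identity combining several earlier facts is $(9)$, which I expect to be the main obstacle—but it dissolves after applying $(7)$ to each side and showing both reduce to the same expression. On the left, $(7)$ with antecedent $x^*$ rewrites $(x^*\ra y)^*\ra (x^*\ra y)$ as $x^*\ra (y^*\ra (x^*\ra y))$, and $(BE_4)$ turns the inner factor $y^*\ra (x^*\ra y)$ into $x^*\ra (y^*\ra y)$, yielding $x^*\ra (x^*\ra (y^*\ra y))$. On the right, $(7)$ rewrites $(x^*\ra x)^*\ra (y^*\ra y)$ directly as $x^*\ra (x^*\ra (y^*\ra y))$. Since the two reductions coincide, the identity follows, completing the proof.
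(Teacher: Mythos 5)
Your proof is correct: each part follows as you describe, with $(7)$ obtained from $(5)$ and $(BE_4)$, $(8)$ as the substitution $y\mapsto y^*$ in $(7)$, and $(9)$ reducing both sides to $x^*\ra (x^*\ra (y^*\ra y))$. The paper itself states this lemma without proof, citing \cite{Ciu78}, so there is no in-paper argument to compare against; your derivation is the standard one from the BE axioms and fills that gap correctly.
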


\noindent
In a BE algebra $X$, we define the additional operation: \\
$\hspace*{3cm}$ $x\Cup y=(x\ra y)\ra y$. \\
If $X$ is involutive, we define the operations: \\
$\hspace*{3cm}$ $x\Cap y=((x^*\ra y^*)\ra y^*)^*$, 
                $x\odot y=(x\ra y^*)^*=(y\ra x^*)^*$, \\
and the relation $\le_Q$ by: \\
$\hspace*{3cm}$ $x\le_Q y$ iff $x=x\Cap y$. 

\begin{proposition} \label{qbe-20} $\rm($\cite{Ciu78}$\rm)$ Let $X$ be an involutive BE algebra. 
Then the following hold for all $x,y,z\in X$: \\
$(1)$ $x\le_Q y$ implies $x=y\Cap x$ and $y=x\Cup y;$ \\
$(2)$ $\le_Q$ is reflexive and antisymmetric; \\
$(3)$ $x\Cap y=(x^*\Cup y^*)^*$ and $x\Cup y=(x^*\Cap y^*)^*;$ \\ 
$(4)$ $x\le_Q y$ implies $x\le y;$ \\
$(5)$ $0\le_Q x \le_Q 1;$ \\
$(6)$ $0\Cap x=x\Cap 0=0$ and $1\Cap x=x\Cap 1=x;$ \\
$(7)$ $(x\Cap y)\ra z=(y\ra x)\ra (y\ra z);$ \\
$(8)$ $z\ra (x\Cup y)=(x\ra y)\ra (z\ra y);$ \\
$(9)$ $x\Cap y\le x,y\le x\Cup y;$ \\
$(10)$ $x\Cap (y\Cap x)=y\Cap x$ and $x\Cap (x\Cap y)=x\Cap y$. 
\end{proposition}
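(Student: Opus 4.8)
The plan is to derive all ten items from the definitions of $\Cap,\Cup,\odot$ and $\le_Q$, together with the BE-algebra axioms $(BE_1)$--$(BE_4)$ and the identities of Lemma \ref{qbe-10}; involutivity ($x^{**}=x$) will be used constantly. I would record item $(3)$ first, since it is almost immediate and serves as the workhorse afterward: $x^*\Cup y^*=(x^*\ra y^*)\ra y^*$, so $(x^*\Cup y^*)^*=x\Cap y$ by definition, and dually $x^*\Cap y^*=((x\ra y)\ra y)^*=(x\Cup y)^*$ after using $x^{**}=x$, whence $x\Cup y=(x^*\Cap y^*)^*$. I would also isolate the order-reversal fact that in an involutive BE algebra $u\le v\Leftrightarrow v^*\le u^*$, which is just item $(6)$ of Lemma \ref{qbe-10} read as $v^*\ra u^*=u\ra v$. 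The duality $(3)$ and this reversal let me convert every $\Cap$-statement into a $\Cup$-statement, where the operation $(x\ra y)\ra y$ is far easier to handle.

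A large block of items is then pure computation. For $(6)$ I would evaluate each expression using $0^*=1$, $1^*=0$, $(BE_2)$, $(BE_3)$ and $0\le x^*$; for instance $0\Cap x=((1\ra x^*)\ra x^*)^*=(x^*\ra x^*)^*=1^*=0$ and $1\Cap x=((0\ra x^*)\ra x^*)^*=(1\ra x^*)^*=x^{**}=x$, and similarly for $x\Cap 0$ and $x\Cap 1$. Item $(5)$ is then immediate, being the instances $0\Cap x=0$ and $x\Cap 1=x$ of $(6)$ read through the definition of $\le_Q$. Item $(8)$ is nothing but $(BE_4)$ applied to $z\ra((x\ra y)\ra y)$. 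For $(7)$ I would write $x\Cap y=w^*$ with $w=(x^*\ra y^*)\ra y^*$ and apply item $(7)$ of Lemma \ref{qbe-10} (the identity $(a\ra b)^*\ra c=a\ra(b^*\ra c)$) to $w^*\ra z$, then replace $x^*\ra y^*$ by $y\ra x$ via $(6)$. Item $(10)$ follows the same pattern: with $w=x\Cap y$ the key is $x^*\ra w^*=(x^*\ra y^*)\ra(x^*\ra y^*)=1$ by $(BE_4)$ and $(BE_1)$, whence $x\Cap w=((x^*\ra w^*)\ra w^*)^*=(1\ra w^*)^*=w$; the first equality of $(10)$ is analogous, with $v=y\Cap x$ and $x^*\ra v^*=(y^*\ra x^*)\ra 1=1$. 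Finally, reflexivity in $(2)$ is $x\Cap x=(1\ra x^*)^*=x$.

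The genuinely structural step is item $(9)$, which I regard as the main obstacle, because $\Cap$ is defined ``inside-out'' and does not submit to a direct inequality computation. I would prove the $\Cup$-halves directly: $x\le x\Cup y$ is item $(2)$ of Lemma \ref{qbe-10}, and $y\le x\Cup y$ follows from $y\ra((x\ra y)\ra y)=(x\ra y)\ra 1=1$. Then the $\Cap$-halves come by duality: from $x^*\le x^*\Cup y^*$ and $y^*\le x^*\Cup y^*$, the order-reversal fact and $(3)$ give $x\Cap y=(x^*\Cup y^*)^*\le x$ and $\le y$. Once $(9)$ is in hand the rest cascades. Item $(4)$ is $x=x\Cap y\le y$. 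Item $(1)$ then becomes easy: $x\le_Q y$ gives $x\le y$, i.e. $x\ra y=1$, so $x\Cup y=1\ra y=y$, and $y\Cap x=((y^*\ra x^*)\ra x^*)^*=((x\ra y)\ra x^*)^*=(1\ra x^*)^*=x$. If one prefers to respect the stated ordering and prove $(1)$ before $(9)$, the same conclusion follows by noting that $x=x\Cap y$ is equivalent to $x^*=x^*\Cup y^*$, which forces $y^*\le x^*$ since $y^*\le x^*\Cup y^*$ holds always. Lastly, antisymmetry in $(2)$ follows from $(1)$: if $x\le_Q y$ and $y\le_Q x$, then $x=y\Cap x$ by $(1)$ while $y=y\Cap x$ by hypothesis, so $x=y$. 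The only real care needed throughout is bookkeeping the complements and the repeated appeals to $x^{**}=x$.
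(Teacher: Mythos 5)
Your proof is correct; every step checks out against the definitions of $\Cap$, $\Cup$, $\le_Q$ and the items of Lemma \ref{qbe-10}, and the order of deduction (duality $(3)$ and the computational items first, then $(9)$ by proving the $\Cup$-inequalities and dualizing, then $(4)$, $(1)$ and antisymmetry) is the natural one. Note that the paper itself gives no proof of this proposition --- it is recalled from \cite{Ciu78} --- so there is no in-paper argument to compare against; your derivation is exactly the standard unfolding one would expect that reference to contain.
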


\begin{proposition} \label{qbe-30} $\rm($\cite{Ciu81}$\rm)$ Let $X$ be an involutive BE algebra. 
Then the following hold for all $x,y,z\in X$: \\
$(1)$ $x, y\le_Q z$ and $z\ra x=z\ra y$ imply $x=y;$ \emph{(cancellation law)} \\   
$(2)$ $(x\ra (y\ra z))\ra x^*=((y\ra z)\Cap x)^*;$ \\
$(3)$ $x\ra ((y\ra x^*)^*\Cup z)=y\Cup (x\ra z);$ \\
$(4)$ $((y\ra x)\Cap z)\ra x=y\Cup (z\ra x);$ \\
%$(5)$ $z\ra ((x\ra y)\ra (x^*\Cap y^*))=((z\ra y)\Cap (x\ra y))^*;$ \\
$(5)$ $x\le_Q y$ implies $(y\ra x)\odot y=x;$ \\ 
$(6)$ $x\ra (z\odot y^*)=((z\ra y)\odot x)^*;$ \\
$(7)$ $(x\Cup y)\Cap y=y$ and $(x\Cap y)\Cup y=y;$ \\
$(8)$ $(x\ra (x\ra y)^*)^*=y\Cap x;$ \\
$(9)$ $(x\Cap (y\Cap z))^*=((z\ra x)\Cap (z\ra y))\ra z^*;$ \\ 
$(10)$ $(x\Cap y)^*\ra (y\ra x)^*=y\Cup (y\ra x)^*$. 
\end{proposition}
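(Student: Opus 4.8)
The plan is to establish each of the ten identities by a direct computation, in every case unfolding the derived operations $\Cap$, $\Cup$, $\odot$ into the primitive implication $\ra$ and the involution $^*$, and then reducing with the arithmetic of Lemma~\ref{qbe-10} — principally the exchange laws $(3)$ $x\ra y^*=y\ra x^*$ and $(6)$ $x^*\ra y^*=y\ra x$ — together with the $\Cap$-transfer law Proposition~\ref{qbe-20}$(7)$, namely $(x\Cap y)\ra z=(y\ra x)\ra(y\ra z)$, and the basic axioms $(BE_3)$, $(BE_4)$. A computation I will reuse throughout is that, by the definition of $\Cap$ and Lemma~\ref{qbe-10}$(6)$, one has $(w\Cap x)^*=(x\ra w)\ra x^*$.

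The identities that are essentially definitional I would dispatch first. For $(2)$ and $(8)$ the reused computation $(w\Cap x)^*=(x\ra w)\ra x^*$ (with $w=y\ra z$, resp. combined with Lemma~\ref{qbe-10}$(3)$) already matches the implicational side. For $(6)$ I note $z\odot y^*=(z\ra y)^*$ and apply Lemma~\ref{qbe-10}$(3)$; for $(10)$ I rewrite $(x\Cap y)^*=(y\ra x)\ra y^*$ and then use $(3)$ to turn $((y\ra x)\ra y^*)\ra(y\ra x)^*$ into the $\Cup$-form $y\Cup(y\ra x)^*$. The order-theoretic and cancellation items rest on the definition of $\le_Q$: in $(1)$, expanding $x=x\Cap z$ and applying $(6)$ gives $x^*=(z\ra x)\ra z^*$, and likewise $y^*=(z\ra y)\ra z^*$, so the hypothesis $z\ra x=z\ra y$ forces $x^*=y^*$, whence $x=y$; the same expansion proves $(5)$, since $x\le_Q y$ yields $x^*=(y\ra x)\ra y^*$, hence $(y\ra x)\odot y=((y\ra x)\ra y^*)^*=x$. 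For $(7)$ I invoke the order facts Proposition~\ref{qbe-20}$(9)$, $x\Cap y\le y\le x\Cup y$: from $(x\Cap y)\ra y=1$ and $(BE_3)$ I get $(x\Cap y)\Cup y=y$, and the companion identity follows symmetrically (or by the De~Morgan law Proposition~\ref{qbe-20}$(3)$).

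The identities $(3)$ and $(4)$ are the ``$\Cup$-shaped'' ones, and I treat them in parallel. After rewriting the inner term with Lemma~\ref{qbe-10}$(7)$ for $(3)$ (so that $(y\ra x^*)^*\ra z=y\ra(x\ra z)$) or with Proposition~\ref{qbe-20}$(7)$ for $(4)$, a single application of $(BE_4)$ brings both sides to the common normal form $(y\ra u)\ra u=y\Cup u$, with $u=x\ra z$ in $(3)$ and $u=z\ra x$ in $(4)$.

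The step I expect to be the real work is $(9)$, $(x\Cap(y\Cap z))^*=((z\ra x)\Cap(z\ra y))\ra z^*$, since it nests $\Cap$ on the left and pairs two $\Cap$'s on the right. My strategy is to reduce both sides to the single expression $((z\ra y)\ra(z\ra x))\ra((z\ra y)\ra z^*)$. On the right this is immediate from Proposition~\ref{qbe-20}$(7)$ with $a=z\ra x$, $b=z\ra y$, $c=z^*$. On the left I first apply the reused formula $(x\Cap w)^*=(w\ra x)\ra w^*$ with $w=y\Cap z$, then evaluate the two pieces separately: $w\ra x=(y\Cap z)\ra x=(z\ra y)\ra(z\ra x)$ by Proposition~\ref{qbe-20}$(7)$, and $w^*=(y\Cap z)^*=(z\ra y)\ra z^*$ by Lemma~\ref{qbe-10}$(6)$. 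Matching the two normal forms closes the proof; the only delicate point is the bookkeeping of the involutions so that the two independent applications of Proposition~\ref{qbe-20}$(7)$ land on the identical argument string.
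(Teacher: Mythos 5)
Your proof is correct: the key identity $(w\Cap x)^*=(x\ra w)\ra x^*$ follows from the definition of $\Cap$ and Lemma \ref{qbe-10}$(6)$, and each of the ten items then reduces as you describe, with the computations for $(3)$, $(4)$ and $(9)$ (via $(BE_4)$ and Proposition \ref{qbe-20}$(7)$) checking out in detail. Note that the paper states Proposition \ref{qbe-30} without proof, citing \cite{Ciu81}, so there is no in-paper argument to compare against; your derivation is a valid, self-contained reconstruction from Lemma \ref{qbe-10} and Proposition \ref{qbe-20}.
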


A \emph{(left-)m-BE algebra} (\cite{Ior30}) is an algebra $(X,\odot,^{*},1)$ of type $(2,1,0)$ satisfying the 
following properties, for all $x,y,z\in X$:  
(PU) $1\odot x=x=x\odot 1;$ 
(Pcomm) $x\odot y=y\odot x;$ 
(Pass) $x\odot (y\odot z)=(x\odot y)\odot z;$  
(m-L) $x\odot 0=0;$ 
(m-Re) $x\odot x^{*}=0$, 
where $0:=1^*$. 

\begin{remark} \label{qmv-30-10} 
According to \cite[Cor. 17.1.3]{Ior35}, the involutive (left-)BE algebras $(X,\ra,^*,1)$ are definitionally equivalent to involutive (left-)m-BE algebras $(X,\odot,^*,1)$, by the mutually inverse transformations 
(\cite{Ior30, Ior35}): \\ 
$\hspace*{3cm}$ $\Phi:$\hspace*{0.2cm}$ x\odot y:=(x\ra y^*)^*$ $\hspace*{0.1cm}$ and  
                $\hspace*{0.1cm}$ $\Psi:$\hspace*{0.2cm}$ x\ra y:=(x\odot y^*)^*$. 
\end{remark}

\noindent 
A \emph{quantum-Wajsberg algebra} (\emph{QW algebra, for short}) (\cite{Ciu78}) $(X,\ra,^*,1)$ is an 
involutive BE algebra $(X,\ra,^*,1)$ satisfying the following condition: for all $x,y,z\in X$, \\
(QW) $x\ra ((x\Cap y)\Cap (z\Cap x))=(x\ra y)\Cap (x\ra z)$. \\
It was proved in \cite{Ciu78} that condition (QW) is equivalent to the following conditions: \\
$(QW_1)$ $x\ra (x\Cap y)=x\ra y;$ \\ 
$(QW_2)$ $x\ra (y\Cap (z\Cap x))=(x\ra y)\Cap (x\ra z)$. \\
For more details on quantum-Wajsberg algebras we refer the reader to \cite{Ciu78}. \\
An involutive BE algebra $X$ is called (\cite{Ciu82}): \\
- a \emph{pre-Wajsberg algebra} (preW algebra for short), if it verifies $(QW_1);$ \\
- an \emph{implicative-orthomodular algebra} (IOM algebra for short), if it verifies $(QW_2);$ \\
- a \emph{meta-Wajsberg algebra} (metaW algebra for short), if it verifies: \\
$(QW_3)$ $(x\Cap y)\ra (y\Cap x)=1$. \\
Condition $(QW_3)$ is equivalent to condition: for all $x,y\in X$, \\
$(QW_3^{'})$ $(x\Cup y)\ra (y\Cup x)=1$. \\
According to \cite{Ciu78}, a quantum-Wajsberg algebra satisfies condition $(QW_3)$. 

\noindent
Denoting by $\mathbf{QW}$, $\mathbf{IOM}$, $\mathbf{preW}$ and $\mathbf{metaW}$ the classes of quantum-Wajsberg, 
implicative-orthomodular, pre-Wajsberg and meta-Wajsberg algebras, respectively, we have: \\
$\hspace*{2cm}$ $\mathbf{QW}=\mathbf{preW}\bigcap \mathbf{IOM}=\mathbf{metaW}\bigcap \mathbf{IOM}$, 
$\mathbf{preW}\subset \mathbf{metaW}$. \\

$\vspace*{1mm}$

\section{Implicative BE algebras}

In this section, we define the implicative BE algebras, and we study and characterize the implicative involutive 
BE algebras.  
We also introduce the notions of implicative-orthosoftlattices and implicative-orthowidelattices and we investigate 
their relationships with the implicative-ortholattices. 

\begin{definition} \label{iol-20} 
\emph{
A BE algebra is called \emph{implicative} if it satisfies the following condition: for all $x,y\in X$, \\
$(Impl)$ $(x\ra y)\ra x=x$. 
}
\end{definition}

\begin{lemma} \label{iol-30} Let $(X,\ra,^*,1)$ be an implicative involutive BE algebra. 
Then $X$ verifies the following axioms: for all $x,y\in X$, \\
$(iG)$       $x^*\ra x=x$, or equivalently, $x\ra x^*=x^*;$ \\
$(Iabs$-$i)$ $(x\ra (x\ra y))\ra x=x;$ \\
$(Pimpl)$    $x\ra (x\ra y)=x\ra y$.    
\end{lemma}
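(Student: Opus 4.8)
The plan is to derive the three identities in the order $(iG)$, $(Pimpl)$, $(Iabs\text{-}i)$, each one feeding into the next, so that the whole lemma follows from $(Impl)$ together with the involutive BE-algebra identities collected in Lemma~\ref{qbe-10}.

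First I would prove $(iG)$ by simply instantiating the defining axiom $(Impl)$ at $y=0$. Since $X$ is involutive it is in particular bounded, so $0=1^*$ is available and $x^*=x\ra 0$ by definition. Substituting $y=0$ into $(x\ra y)\ra x=x$ gives at once $(x\ra 0)\ra x=x$, that is, $x^*\ra x=x$. The equivalent form $x\ra x^*=x^*$ then comes for free: applying the identity just obtained to $x^*$ in place of $x$ and using involutivity $x^{**}=x$, the statement $(x^*)^*\ra x^*=x^*$ reads $x\ra x^*=x^*$.

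Next, for $(Pimpl)$ the key step is to rewrite the nested implication by means of Lemma~\ref{qbe-10}(8), namely $x\ra (y\ra z)=(x\ra y^*)^*\ra z$. Taking $y:=x$ turns $x\ra (x\ra z)$ into $(x\ra x^*)^*\ra z$. At this point $(iG)$ collapses the left factor: since $x\ra x^*=x^*$ we have $(x\ra x^*)^*=x^{**}=x$, and therefore $x\ra (x\ra z)=x\ra z$. Relabelling $z$ as $y$ yields $(Pimpl)$. I expect this to be the only genuinely non-mechanical step, as it hinges on recognizing that identity $(8)$ is precisely the one that exposes the subterm $x\ra x^*$ so that $(iG)$ becomes applicable.

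Finally, $(Iabs\text{-}i)$ is an immediate consequence of the two preceding identities: by $(Pimpl)$ we have $x\ra (x\ra y)=x\ra y$, whence $(x\ra (x\ra y))\ra x=(x\ra y)\ra x$, and the right-hand side equals $x$ by $(Impl)$. No further computation is required, and in particular the argument uses nothing beyond the involutive BE-algebra axioms and the implicative condition.
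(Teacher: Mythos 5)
Your proof is correct and follows essentially the same route as the paper: $(iG)$ is obtained identically by setting $y:=0$ in $(Impl)$, and $(Pimpl)$ by collapsing $x\ra x^*$ to $x^*$ inside a rewriting of $x\ra(x\ra y)$ (you use Lemma~\ref{qbe-10}(8) where the paper uses Lemma~\ref{qbe-10}(6) and $(BE_4)$, which is immaterial). Your derivation of $(Iabs$-$i)$ directly from $(Pimpl)$ and $(Impl)$ is a slight simplification of the paper's argument, which instead combines $(iG)$ with Lemma~\ref{qbe-10}(7), but the substance is the same.
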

\begin{proof} 
Let $X$ be an implicative involutive BE algebra, so that it satisfies axiom $(Impl)$. \\
$(iG)$ Taking $y:=0$ in $(Impl)$ we have $x^*\ra x=x$, and replacing $x$ by $x^*$ in this identity, 
we get $x\ra x^*=x^*$, that is $(iG)$. \\ 
$(Iabs$-$i)$ By $(iG)$ and $(Impl)$, we have $((x\ra x^*)^*\ra y)\ra x=x$. 
Since by Lemma \ref{qbe-10}$(7)$, $(x\ra x^*)^*\ra y=x\ra (x\ra y)$, it follows that $X$ verifies $(Iabs$-$i)$. \\
$(Pimpl)$ Using $(iG)$ and Lemma \ref{qbe-10}$(6)$, we have: \\ 
$\hspace*{2cm}$ $x\ra (x\ra y)=x\ra (y^*\ra x^*)=y^*\ra (x\ra x^*)=y^*\ra x^*=x\ra y$. 
\end{proof}

\begin{lemma} \label{iol-30-05} Let $(X,\ra,^*,1)$ be an implicative involutive BE algebra. 
The following hold for all $x,y\in X$: \\
$(1)$ $x\ra (y\ra x)^*=x^*;$ \\
$(2)$ $(y\ra x^*)\ra x=x;$ \\
$(3)$ $(x\ra (x^*\ra y)^*)^*=x;$ \\
$(4)$ $x^*\ra (x\ra y^*)^*=x;$ \\
$(5)$ $x\ra (y\ra x^*)=y\ra x^*$.    
\end{lemma}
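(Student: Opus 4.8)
The plan is to derive each of the five identities in Lemma \ref{iol-30-05} directly from the implicative axiom $(Impl)$, the involutivity of $X$, the basic BE-algebra identities collected in Lemma \ref{qbe-10}, and the three derived axioms $(iG)$, $(Iabs\mbox{-}i)$, $(Pimpl)$ established in Lemma \ref{iol-30}. Each part is a short equational manipulation, so the strategy is to rewrite one side until it matches the other, choosing at each step the single tool that collapses the expression. The recurring workhorses will be Lemma \ref{qbe-10}$(3)$ and $(5)$--$(6)$ (the various contraposition-type rules $x\ra y^*=y\ra x^*$, $x^*\ra y=y^*\ra x$, $x^*\ra y^*=y\ra x$) together with $(iG)$ in the form $x^*\ra x=x$.

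First I would prove $(1)$. The natural move is to turn $x\ra(y\ra x)^*$ into something absorbable: by Lemma \ref{qbe-10}$(3)$ we have $x\ra(y\ra x)^*=(y\ra x)\ra x^*$, and then I expect to apply $(iG)$ or $(Impl)$ (after recognizing the shape $(\,\cdot\,\ra x^*)$) to reduce this to $x^*$. Part $(2)$, the identity $(y\ra x^*)\ra x=x$, is essentially $(Impl)$ with the first implication's consequent written as $x^*$; the plan is to set $z:=y\ra x^*$ and observe that $(Impl)$ gives $(z\ra x)\ra x=x$ only after one checks that $z\ra x$ already has the required form, or more directly to note that $(y\ra x^*)\ra x$ matches the left side of $(iG)$-style reasoning once we use Lemma \ref{qbe-10}$(5)$ to rewrite $y\ra x^*$ as $x\ra y^*$ and fold it into $(Impl)$.

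For $(3)$ and $(4)$ I would exploit the evident duality between them: part $(4)$ is obtained from part $(3)$ by substituting $x\mapsto x^*$ and using $x^{**}=x$ together with a contraposition identity, so it suffices to prove one of them carefully and deduce the other. To establish $(3)$, namely $(x\ra(x^*\ra y)^*)^*=x$, the plan is to simplify the inner term $(x^*\ra y)^*$ first: applying Lemma \ref{qbe-10}$(3)$ or $(6)$ rewrites $x\ra(x^*\ra y)^*$ into a form whose outer negation is handled by $(iG)$. Finally, for $(5)$ the target $x\ra(y\ra x^*)=y\ra x^*$ is exactly the $(Pimpl)$ phenomenon relocated through a contraposition: I would use Lemma \ref{qbe-10}$(4)$/$(BE_4)$ to swap $x$ and $y$, producing $y\ra(x\ra x^*)$, and then collapse $x\ra x^*=x^*$ by $(iG)$, yielding $y\ra x^*$ as required.

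The main obstacle I anticipate is not conceptual but bookkeeping: each identity hinges on selecting the correct one of the several near-identical contraposition rules in Lemma \ref{qbe-10}, and an off-by-one choice of $(3)$ versus $(5)$ versus $(6)$ will produce a stalled expression. In particular, parts $(1)$ and $(3)$ require keeping precise track of which variable carries the star before and after each rewrite, since a single misplaced involution $x^{**}$ can silently break the cancellation by $(iG)$. The safeguard is to reduce every intermediate expression to the canonical shape $(\text{something}\ra x^*)\ra x$ or $x^*\ra x$, at which point $(Impl)$ or $(iG)$ terminates the computation immediately.
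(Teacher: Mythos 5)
Your plan is correct and follows essentially the same route as the paper's proof: each identity is obtained from $(Impl)$ together with the contraposition rules of Lemma \ref{qbe-10} and, for part $(5)$, $(BE_4)$ plus $(iG)$. The only (harmless) differences are bookkeeping ones — the paper derives $(2)$ from $(1)$ and proves $(3)$ and $(4)$ each directly from $(Impl)$, whereas you derive $(2)$ directly and obtain $(4)$ from $(3)$ by the substitution $x\mapsto x^*$, $y\mapsto y^*$; and in $(3)$ the terminating step is really $(Impl)$ instantiated at $x^*$ rather than $(iG)$, which your closing paragraph already anticipates.
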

\begin{proof} 
$(1)$ By $(Impl)$, we get: \\
$\hspace*{1cm}$ $(x\ra y)\ra x=x$ iff $x^*\ra (x\ra y)^*=x$ iff $x^*\ra (y^*\ra x^*)^*=x$, \\
and replacing $x$ by $x^*$ and $y$ by $y^*$, it follows that $x\ra (y\ra x)^*=x^*$. \\
$(2)$ Applying $(1)$, we have $(y\ra x^*)\ra x=x^*\ra (y\ra x^*)^*=x$. \\
$(3)$ By $(Impl)$, $(x^*\ra y)\ra x^*=x^*$, hence $x\ra (x^*\ra y)^*=x^*$, so that $(x\ra (x^*\ra y)^*)^*=x$. \\
$(4)$ Using $(Impl)$, we get $x^*\ra (x\ra y^*)^*=(x\ra y^*)\ra x=x$. \\
$(5)$ Applying $(iG)$, we get $x\ra (y\ra x^*)=y\ra (x\ra x^*)=y\ra x^*$.  
\end{proof}

\begin{corollary} \label{iol-30-10} Let $(X,\ra,^*,1)$ be an involutive BE algebra. The following are equivalent: \\
$(a)$ $X$ is implicative; \\
$(b)$ $X$ verifies axioms $(iG)$ and $(Iabs$-$i)$; \\ 
$(c)$ $X$ verifies axioms $(Pimpl)$ and $(Iabs$-$i)$. 
\end{corollary}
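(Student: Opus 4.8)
The plan is to establish the cycle of implications $(a)\Rightarrow(b)\Rightarrow(c)\Rightarrow(a)$, since most of the work for the first arrow is already packaged in Lemma~\ref{iol-30}. The arrow $(a)\Rightarrow(b)$ is then immediate: if $X$ is implicative, Lemma~\ref{iol-30} asserts that $X$ verifies both $(iG)$ and $(Iabs$-$i)$, which is exactly statement $(b)$.

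The crux is $(b)\Rightarrow(c)$, where I must manufacture $(Pimpl)$ out of $(iG)$ together with involutivity, while $(Iabs$-$i)$ is assumed and carries over unchanged. The key ingredient is the identity of Lemma~\ref{qbe-10}$(7)$: substituting $y:=x^*$ there and using $x^{**}=x$, one obtains the purely involutive identity $(x\ra x^*)^*\ra y=x\ra(x\ra y)$, valid in any involutive BE algebra regardless of $(iG)$. Now I invoke $(iG)$ in the form $x\ra x^*=x^*$, so that $(x\ra x^*)^*=x^{**}=x$ and hence $(x\ra x^*)^*\ra y=x\ra y$. Comparing the two evaluations of $(x\ra x^*)^*\ra y$ yields $x\ra(x\ra y)=x\ra y$, i.e. $(Pimpl)$; combined with the hypothesis $(Iabs$-$i)$, this delivers $(c)$.

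Finally, $(c)\Rightarrow(a)$ is a one-line substitution: $(Iabs$-$i)$ reads $(x\ra(x\ra y))\ra x=x$, and rewriting the inner term $x\ra(x\ra y)$ as $x\ra y$ via $(Pimpl)$ gives $(x\ra y)\ra x=x$, which is precisely $(Impl)$, so $X$ is implicative. This closes the cycle and proves the three conditions equivalent.

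I expect no serious obstacle in this argument; the only non-mechanical observation is that $(iG)$ collapses $(x\ra x^*)^*$ to $x$, and that pairing this with the involutive identity extracted from Lemma~\ref{qbe-10}$(7)$ is exactly what produces $(Pimpl)$ from $(iG)$. Everything else reduces to direct substitution, so the main care is simply to route the implications through the cycle rather than trying to prove all six arrows separately.
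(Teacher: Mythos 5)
Your argument is correct and rests on the same ingredients as the paper's proof: the identity $(x\ra x^*)^*\ra y=x\ra(x\ra y)$ extracted from Lemma \ref{qbe-10}$(7)$, the collapse $(x\ra x^*)^*=x$ given by $(iG)$, and the one-line substitution of $(Pimpl)$ into $(Iabs$-$i)$ to recover $(Impl)$. The only difference is organizational --- you route $(b)\Rightarrow(c)\Rightarrow(a)$ as a cycle, whereas the paper proves $(b)\Rightarrow(a)$ and $(c)\Rightarrow(a)$ directly --- which is immaterial.
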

\begin{proof}
If $X$ is implicative, then by Lemma \ref{iol-30} it verifies axioms $(iG)$, $(Pimpl)$ and $(Iabs$-$i)$. 
Hence $(a)\Rightarrow (b)$ and $(a)\Rightarrow (c)$. 
Conversely, we see that, by Lemma \ref{qbe-10}$(7)$, axiom $(Iabs$-$i)$ is equivalent to axiom: for all $x,y\in X$, \\
$(Iabs$-$i^{'})$ $((x\ra x^*)^*\ra y)\ra x=x$. \\ 
Since by $(iG)$, $x\ra x^*=x^*$, it follows from $(Iabs$-$i^{'})$ that $(x\ra y)\ra x=x$. 
Hence $X$ satisfies axiom $(Impl)$, that is $(b)\Rightarrow (a)$.  
Moreover, if $X$ satisfies $(Iabs$-$i)$ and $(Pimpl)$, we have $x=(x\ra (x\ra y))\ra x=(x\ra y)\ra x$. 
It follows that $X$ verifies $(Impl)$, so that $(c)\Rightarrow (a)$.  
\end{proof}

\begin{proposition} \label{iol-30-20} Let $(X\ra,^*,1)$ be an involutive BE algebra satisfying axiom $(QW_1)$.  
Then axioms $(Impl)$ and $(Pimpl)$ are equivalent. 
\end{proposition}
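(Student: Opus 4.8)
The plan is to prove the two implications separately, noting that only the converse genuinely uses $(QW_1)$. The implication $(Impl)\Ri (Pimpl)$ is already available: by Lemma~\ref{iol-30}, every implicative involutive BE algebra satisfies $(Pimpl)$, and that argument never invokes $(QW_1)$. So the entire content of the proposition lies in proving $(Pimpl)\Ri (Impl)$ under the hypothesis $(QW_1)$.

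Before starting, I would record why the obvious shortcut through Corollary~\ref{iol-30-10} is circular. Given $(Pimpl)$, axiom $(Iabs$-$i)$ reads $(x\ra(x\ra y))\ra x=x$, and since $x\ra(x\ra y)=x\ra y$ this collapses to $(x\ra y)\ra x=x$, i.e. to $(Impl)$ itself. Thus $(Iabs$-$i)$ and $(Impl)$ coincide in the presence of $(Pimpl)$, so one cannot bootstrap $(Impl)$ from the corollary alone. The hypothesis $(QW_1)$ must be used essentially, and no separate derivation of $(iG)$ is needed.

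My key step is to recast $(QW_1)$ in a purely implicative form. Starting from $x\ra(x\Cap y)=x\ra y$ and unfolding $x\Cap y=((x^*\ra y^*)\ra y^*)^*$, I would use Lemma~\ref{qbe-10}$(3)$ to rewrite the left-hand side as $((x^*\ra y^*)\ra y^*)\ra x^*$ and Lemma~\ref{qbe-10}$(6)$ to rewrite the right-hand side as $y^*\ra x^*$. Applying the substitution $x\mapsto x^*$, $y\mapsto y^*$, which is a bijection on $X$ since $X$ is involutive, then turns $(QW_1)$ into the equivalent identity
\[
((x\ra y)\ra y)\ra x=y\ra x,\qquad\text{i.e.}\qquad (x\Cup y)\ra x=y\ra x .
\]

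Once this reformulation is in hand, the conclusion is a one-line substitution. In $((x\ra y)\ra y)\ra x=y\ra x$ I would replace $y$ by $x\ra y$, obtaining $((x\ra(x\ra y))\ra(x\ra y))\ra x=(x\ra y)\ra x$. By $(Pimpl)$ the inner term $x\ra(x\ra y)$ equals $x\ra y$, so the left-hand side becomes $((x\ra y)\ra(x\ra y))\ra x$; then $(BE_1)$ gives $(x\ra y)\ra(x\ra y)=1$ and $(BE_3)$ gives $1\ra x=x$, whence $(x\ra y)\ra x=x$, which is exactly $(Impl)$. The main obstacle is locating the right equivalent form of $(QW_1)$; after that, pushing everything through the involution to read $x\Cap y$ via its dual $x\Cup y$, the substitution $y:=x\ra y$ combined with $(Pimpl)$ does all the remaining work.
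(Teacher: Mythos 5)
Your proof is correct and is essentially the paper's argument in dual form: the paper shows $x^*\Cap(x\ra y)^*=(x\Cup(x\ra y))^*=0$ via $(Pimpl)$ and then applies $(QW_1)$ at the pair $(x^*,(x\ra y)^*)$ to get $x=x^*\ra(x\ra y)^*=(x\ra y)\ra x$, which is exactly your instance $((x\ra(x\ra y))\ra(x\ra y))\ra x=(x\ra y)\ra x$ of the dualized identity $((x\ra y)\ra y)\ra x=y\ra x$. The reformulation step and the observation about the circularity of Corollary~\ref{iol-30-10} are sound, so this is the same proof presented in a slightly different order.
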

\begin{proof} 
By Lemma \ref{iol-30}, axiom $(Impl)$ implies axiom $(Pimpl)$. 
Conversely, assume that $X$ satisfies $(Pimpl)$ and let $x,y\in X$. Since $x\ra (x\ra y)=x\ra y$, we have: \\
$\hspace*{2.00cm}$ $x^*\Cap (x\ra y)^*=(x\Cup (x\ra y))^*=((x\ra (x\ra y))\ra (x\ra y))^*$ \\
$\hspace*{4.50cm}$ $=((x\ra y)\ra (x\ra y))^*=1^*=0$. \\
Using $(QW_1)$ and Lemma \ref{qbe-10}$(5)$, we get: \\
$\hspace*{2.00cm}$ $x=(x^*)^*=x^*\ra 0=x^*\ra (x^*\Cap (x\ra y)^*)=x^*\ra (x\ra y)^*=(x\ra y)\ra x$. \\ 
Hence axiom $(Impl)$ is satisfied. 
\end{proof}

\begin{proposition} \label{gioml-90-10} Let $(X,\ra,^*,1)$ be an involutive BE algebra. 
If $X$ satisfies axioms $(QW_1)$ and $(iG)$, then $X$ is implicative. 
\end{proposition}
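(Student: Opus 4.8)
The plan is to reduce the statement to two results already proved in the excerpt, namely Proposition \ref{iol-30-20} and the $(Pimpl)$-part of Lemma \ref{iol-30}. Note that Corollary \ref{iol-30-10} is not directly applicable here, since using it would still require establishing $(Iabs$-$i)$; instead I will produce $(Impl)$ along the chain $(iG)\Rightarrow(Pimpl)$ followed by $(QW_1)+(Pimpl)\Rightarrow(Impl)$.

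First I would verify that the hypothesis $(iG)$ by itself already forces $(Pimpl)$. This is precisely the computation carried out for $(Pimpl)$ inside the proof of Lemma \ref{iol-30}: one rewrites the inner implication by Lemma \ref{qbe-10}$(6)$ as $x\ra y=y^*\ra x^*$, commutes the outer arguments by $(BE_4)$, and finally applies $(iG)$ in the form $x\ra x^*=x^*$, obtaining
\[
x\ra(x\ra y)=x\ra(y^*\ra x^*)=y^*\ra(x\ra x^*)=y^*\ra x^*=x\ra y .
\]
The essential point is that this derivation never invokes $(Impl)$; it uses only $(iG)$ together with identities valid in every involutive BE algebra. Hence, under the standing hypotheses, $X$ satisfies $(Pimpl)$.

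Second, with $(Pimpl)$ now available and $(QW_1)$ assumed by hypothesis, I would invoke the converse direction of Proposition \ref{iol-30-20}: in an involutive BE algebra satisfying $(QW_1)$, axioms $(Impl)$ and $(Pimpl)$ are equivalent. Consequently $(Pimpl)$ yields $(Impl)$, so that $X$ satisfies $(Impl)$, i.e. $X$ is implicative, which is exactly the desired conclusion.

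I do not anticipate a real obstacle in this argument; the only thing demanding care is checking that the two cited ingredients apply under the present, weaker hypotheses — that the $(Pimpl)$-part of Lemma \ref{iol-30} does not covertly depend on implicativity (so that it may legitimately be quoted from $(iG)$ alone), and that the converse half of Proposition \ref{iol-30-20} relies only on $(QW_1)$ and $(Pimpl)$. Both of these are the case, so the reduction is clean.
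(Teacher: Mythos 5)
Your proof is correct and follows essentially the same route as the paper's: both arguments first derive $(Pimpl)$ from $(iG)$ alone by the identical computation $x\ra(x\ra y)=x\ra(y^*\ra x^*)=y^*\ra(x\ra x^*)=y^*\ra x^*=x\ra y$, and then obtain $(Impl)$ from $(QW_1)$ together with $(Pimpl)$. The only difference is that you cite the converse half of Proposition \ref{iol-30-20} for the second step, whereas the paper re-derives that implication by an inline computation; your version is the more economical of the two and the citation is legitimate, since Proposition \ref{iol-30-20} precedes this statement and its converse direction uses only $(QW_1)$ and $(Pimpl)$.
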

\begin{proof}
Let $X$ be an involutive BE algebra satisfying axioms $(QW_1)$ and $(iG)$, and let $x,y\in X$. 
Using $(iG)$, we have: $x\ra (x\ra y)=x\ra (y^*\ra x^*)=y^*\ra (x\ra x^*)=y^*\ra x^*=x\ra y$. 
Hence $X$ satisfies condition $(Pimpl)$. 
Moreover, we have succesively: \\
$\hspace*{2.00cm}$ $x\ra (x\Cap y)=x\ra y$ (by $(QW_1)$), \\
$\hspace*{2.00cm}$ $(x\Cap y)^*\ra x^*=y^*\ra x^*$ (Lemma \ref{qbe-10}$(6)$), \\
$\hspace*{2.00cm}$ $((x^*\ra y^*)\ra y^*)\ra x^*=y^*\ra x^*$, \\
$\hspace*{2.00cm}$ $((x^*\ra (x^*\ra y^*))\ra (x^*\ra y^*))\ra x^*=(x^*\ra y^*)\ra x^*$ \\
$\hspace*{8.00cm}$                           (replacing $y$ by $(x^*\ra y^*)^*$), \\
$\hspace*{2.00cm}$ $((x^*\ra y^*)\ra (x^*\ra y^*))\ra x^*=(x^*\ra y^*)\ra x^*$ (by $(Pimpl)$), \\
$\hspace*{2.00cm}$ $1\ra x^*=(x^*\ra y^*)\ra x^*$, \\
$\hspace*{2.00cm}$ $x^*=(x^*\ra y^*)\ra x^*$. \\
Hence $(x^*\ra y^*)\ra x^*=x^*$, and replacing $x,y$ by $x^*,y^*$, respectively, we get $(x\ra y)\ra x=x$. 
It follows that $X$ satisfies axiom $(Impl)$, so that it is implicative. 
\end{proof}

\begin{definition} \label{iol-40} \emph{(\cite[Def. 3.29]{Ior30})} 
\emph{
An implicative involutive BE algebra $(X,\ra,^*,1)$ is called an \emph{implicative-ortholattice} (IOL for short). 
}
\end{definition}

\begin{proposition} \label{iol-50} The implicative-ortholattices are defitionally equivalent to ortholattices. 
\end{proposition}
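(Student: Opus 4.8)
The plan is to exhibit a pair of mutually inverse, term-definable transformations between ortholattices and implicative involutive BE algebras, and then to check that each transformation lands in the intended class. I would build everything on top of the already-established equivalence between involutive BE algebras and involutive m-BE algebras (Remark \ref{qmv-30-10}), so that the mutual inversion comes almost for free and only the ``ortholattice versus implicative'' part needs genuine work. The guiding observation is that in an \emph{arbitrary} involutive BE algebra the derived product $\odot$ is already commutative, associative, bounded, and orthocomplemented, and it is precisely the implicative axiom $(Impl)$ that supplies idempotency and absorption, thereby promoting $\odot$ to a lattice meet.

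Going from an implicative involutive BE algebra $(X,\ra,^*,1)$ to an ortholattice, I would set $x\wedge y:=x\odot y=(x\ra y^*)^*$, $x\vee y:=(x^*\odot y^*)^*=x^*\ra y$, $x^{'}:=x^*$, $0:=1^*$, and keep $1$. First I would record what holds in \emph{any} involutive BE algebra: $\odot$ is commutative and associative (it is the m-BE product of Remark \ref{qmv-30-10}, satisfying (Pcomm), (Pass)), $1\odot x=x$ and $x\odot 0=0$ follow from $(BE_3)$ and $(BE_2)$, and $x\odot x^*=(x\ra x)^*=1^*=0$. These already give the bound axiom $(L_5)$, the De Morgan laws $(L_7)$ (by the very definition of $\vee$ as the $^*$-dual of $\wedge$), the noncontradiction law $(L_8)$, and, dually, the excluded-middle law $(L_9)$; the double-negation law $(L_6)$ is just involutivity. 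The only axioms that use implicativeness are idempotency and absorption: $(L_1)$ follows from $(iG)$ of Lemma \ref{iol-30} since $x\odot x=(x\ra x^*)^*=(x^*)^*=x$, while the absorption law $x\vee(x\wedge y)=x^*\ra(x\ra y^*)^*=x$ is exactly Lemma \ref{iol-30-05}$(4)$, its dual $x\wedge(x\vee y)=x$ following by $^*$-duality. Hence $(X,\wedge,\vee,^{'},0,1)$ satisfies $(L_1)$--$(L_9)$ and is an ortholattice.

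Conversely, from an ortholattice $(X,\wedge,\vee,^{'},0,1)$ I would define $x^*:=x^{'}$ and $x\ra y:=x^{'}\vee y$. Checking the BE axioms is routine: $(BE_1)$ is $x^{'}\vee x=1$ (excluded middle), $(BE_2)$ is $x^{'}\vee 1=1$, $(BE_3)$ is $1^{'}\vee x=0\vee x=x$, and $(BE_4)$ reduces to commutativity and associativity of $\vee$; involutivity is $(L_6)$. The implicative axiom is then a one-line absorption computation, $(x\ra y)\ra x=(x^{'}\vee y)^{'}\vee x=(x\wedge y^{'})\vee x=x$, using a De Morgan law and $(L_4)$. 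So this assignment yields an implicative involutive BE algebra.

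Finally I would verify that the two assignments are mutually inverse, which reduces to the transformations $\Phi,\Psi$ of Remark \ref{qmv-30-10}: starting from $\ra$, since $x\odot y^*=(x\ra(y^*)^*)^*=(x\ra y)^*$ the recovered implication $(x\odot y^*)^*$ equals $x\ra y$; starting from an ortholattice, the recovered meet $(x\ra y^*)^*=(x^{'}\vee y^{'})^{'}=x\wedge y$, with $\vee$ and $^{'}$ recovered dually. The step I expect to carry the real content is the forward direction's absorption law; everything else is either inherited from the general involutive BE/m-BE machinery or is a short orthocomplement manipulation. The conceptual point worth stressing is that implicativeness, routed through $(iG)$ and Lemma \ref{iol-30-05}$(4)$, is exactly the increment that turns the ambient bounded orthocomplemented commutative associative structure into a genuine lattice, which is why implicative-ortholattices coincide with ortholattices.
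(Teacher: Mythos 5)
Your proposal is correct and follows essentially the same route as the paper: both directions use the same mutually inverse transformations $x\ra y:=(x\wedge y^{'})^{'}=x^{'}\vee y$ and $x\wedge y:=(x\ra y^*)^*$, $x\vee y:=x^*\ra y$, with $(Impl)$ and absorption playing the decisive roles in each direction. The only cosmetic difference is that you route the routine lattice axioms through the m-BE equivalence of Remark \ref{qmv-30-10} and cite Lemma \ref{iol-30-05}$(4)$ for absorption, whereas the paper verifies the same identities by direct computation from $(BE_1)$--$(BE_4)$, $(Impl)$ and Lemma \ref{qbe-10}.
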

\begin{proof}
Let $(X,\wedge,\vee,^{'},0,1)$ be an ortholattice. 
Define $x\ra y:=(x\wedge y^{'})^{'}$, and we can see that $x\ra y=x^{'}\vee y$ and 
$x^*:=x\ra 0=(x\wedge 0^{'})^{'}=(x\wedge 1)^{'}=x^{'}$, for all $x,y\in X$. 
Axioms $(BE_1)$-$(BE_4)$ follow immediately from $(L_1)$-$(L_9)$. 
For example, to prove $(BE_4)$, using $(L_3)$, $(L_2)$ we have: \\
$\hspace*{2.00cm}$ $x\ra (y\ra z)=x\ra (y\wedge z^{'})^{'}=(x\wedge (y\wedge z^{'}))^{'}
                                 =x^{'}\vee (y\wedge z^{'})^{'}$ \\
$\hspace*{4.25cm}$ $=x^{'}\vee (y^{'}\vee z)=y^{'}\vee (x^{'}\vee z)=(y\wedge (x\wedge z^{'}))^{'}$ \\
$\hspace*{4.25cm}$ $=y\ra (x\wedge z^{'})^{'}=y\ra (x\ra z)$. \\
Moreover, by $(L_2)$ and $(L_4)$, we have: \\
$\hspace*{1.00cm}$  $(x\ra y)\ra x=(x\wedge y^{'})\ra x=((x\wedge y^{'})^{'}\wedge x^{'})^{'}
                                  =(x\wedge y^{'})\vee x=x\vee (x\wedge y^{'})=x$, \\
that is axiom $(Impl)$ is verified. 
Since the double negation axiom holds, $(X,\ra,^*,1)$ is an implicative involutive BE algebra. \\
Conversely, let $(X,\ra,^*,1)$ be an implicative involutive BE algebra, and define: 
$0:=1^*$, $x\wedge y:=(x\ra y^*)^*$ and $x\vee y:=x^*\ra y=(x^*\wedge y^*)^*$. 
We have $x^{'}=(x\wedge 1)^{'}=(x\wedge 0^{'})^{'}=x\ra 0=x^*$. 
Axioms $(L_1)$-$(L_9)$ can be easily proved using axioms $(BE_1)$-$(BE_4)$, $(Impl)$, involution and 
Lemma \ref{qbe-10}.  
For example, to prove $(L_4)$, using Lemma \ref{qbe-10}$(5)$ and $(Impl)$ we have: 
$x\wedge (x\vee y)=(x\ra (x\vee y)^*)^*=(x\ra (x^*\ra y)^*)=((x^*\ra y)\ra x^*)^*=(x^*)^*=x$. 
Hence $(X,\wedge,\vee,^*,0,1)$ is an ortholattice. 
\end{proof}

\noindent
In what follows we define the notions of implicative-orthosoftlattices and implicative-orthowidelattices. 

\begin{definition} \label{iol-90}  
\emph{
An involutive BE algebra $(X,\ra,^*,1)$ is called: \\
$-$ an \emph{implicative-orthosoftlattice} (IOSL for short) if it satisfies axiom: \\ 
$(iG)$ $x^*\ra x=x$, or equivalently, $x\ra x^*=x^*;$ \\
$-$ an \emph{implicative-orthowidelattice} (IOWL for short) if it satisfies axiom: \\ 
$(Iabs$-$i)$ $(x\ra (x\ra y))\ra x=x$.   
}
\end{definition}

\begin{proposition} \label{iol-100} The following hold: \\
$(1)$ the implicative-orthosoftlattices are defitionally equivalent to orthosoftlattices; \\
$(2)$ the implicative-orthowidelattices are defitionally equivalent to orthowidelattices. 
\end{proposition}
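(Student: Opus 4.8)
The plan is to factor each equivalence through the definitional equivalence between involutive BE algebras and involutive m-BE algebras recorded in Remark \ref{qmv-30-10}. By the results of \cite{Ior37} quoted above, the orthosoftlattices and orthowidelattices admit the equivalent presentations as involutive m-BE algebras $(X,\odot,^*,1)$ satisfying $(G)$, respectively $(m$-$Pabs$-$i)$. Since the mutually inverse maps $\Phi$ and $\Psi$ already identify the bare involutive BE and involutive m-BE structures, it suffices to verify that $\Phi$ (equivalently $\Psi$) carries the defining axiom $(iG)$ of an implicative-orthosoftlattice onto $(G)$, and the defining axiom $(Iabs$-$i)$ of an implicative-orthowidelattice onto $(m$-$Pabs$-$i)$; the two equivalences then follow by composing with the equivalences of \cite{Ior37}.

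For part $(1)$ I would simply translate $(G)$ across $\Phi$. Since $x\odot x=(x\ra x^*)^*$, the axiom $x\odot x=x$ holds iff $x\ra x^*=x^*$, which is exactly the second form of $(iG)$; dually, under $\Psi$ the identity $x\ra x^*=(x\odot x)^*$ shows $(iG)\Leftrightarrow(G)$. Hence $\Phi$ and $\Psi$ restrict to mutually inverse bijections between the implicative-orthosoftlattices and the m-BE orthosoftlattices, which gives $(1)$.

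For part $(2)$ the work lies in rewriting $(Iabs$-$i)$ in terms of $\odot$ together with its dual $x\oplus y:=(x^*\odot y^*)^*$. From $\Phi$ one obtains $x\ra y=x^*\oplus y$, whence $x\ra(x\ra y)=x^*\oplus x^*\oplus y$, so $(Iabs$-$i)$ becomes $(x^*\oplus x^*\oplus y)^*\oplus x=x$. Applying the De Morgan law $a^*\oplus x=(a\odot x^*)^*$, this is equivalent to $(x^*\oplus x^*\oplus y)\odot x^*=x^*$, and after commutativity of $\odot$ and the substitution $x\mapsto x^*$ (valid by involutivity) it reads $x\odot(x\oplus x\oplus y)=x$, which is $(m$-$Pabs$-$i)$. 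Since every step is reversible, $\Phi$ and $\Psi$ restrict to mutually inverse bijections between the implicative-orthowidelattices and the m-BE orthowidelattices, yielding $(2)$.

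The only real obstacle is this last rewriting: one must keep the involution and the $\odot$--$\oplus$ duality, together with the associativity and commutativity of $\oplus$, aligned through the computation so that the final substitution $x\mapsto x^*$ lands precisely on the stated form of $(m$-$Pabs$-$i)$. Everything else is a routine consequence of Remark \ref{qmv-30-10} and the equivalences of \cite{Ior37}.
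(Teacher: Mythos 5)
Your proposal is correct and follows exactly the route the paper takes: both reduce to the definitional equivalence of Remark \ref{qmv-30-10} via $\Phi$ and $\Psi$ and then check that $(iG)$ translates to $(G)$ and $(Iabs$-$i)$ to $(m$-$Pabs$-$i)$. The paper merely asserts these translations can "easily" be shown, whereas you carry out the computations (correctly, including the use of associativity of $\oplus$ and the substitution $x\mapsto x^*$), so your write-up is a fleshed-out version of the same argument.
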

\begin{proof}
By Remark \ref{qmv-30-10}, the involutive BE algebras are defitionally equivalent to involutive m-BE algebras. 
Using the mutually inverse transformations (\cite{Ior30, Ior35}): \\ 
$\hspace*{3cm}$ $\Phi:$\hspace*{0.2cm}$ x\odot y:=(x\ra y^*)^*$ $\hspace*{0.1cm}$ and  
                $\hspace*{0.1cm}$ $\Psi:$\hspace*{0.2cm}$ x\ra y:=(x\odot y^*)^*$, \\
and the relation $x\oplus y=(x^*\odot y^*)^*$, we can easily show that axiom $(iG)$ is equivalent to $(G)$, and 
axiom $(Iabs$-$i)$ is equivalent to $(m$-$Pabs$-$i)$. 
\end{proof}

\begin{remark} \label{iol-110}
Denote by $\mathbf{IOL}$, $\mathbf{ISOL}$ and $\mathbf{IWOL}$ the classes of implicative-ortholattice, 
implicative-orthosoftlattice and implicative-orthowideolattice, respectively. 
As a consequence of Corollary \ref{iol-30-10}, we have $\mathbf{IOL}=\mathbf{ISOL}\bigcap \mathbf{IWOL}$. 
\end{remark}

$\vspace*{1mm}$

\section{Implicative-orthomodular lattices} 

Based on implicative involutive BE algebras, we define and study the implicative-orthomodular lattices, 
proving that they are defitionally equivalent to orthomodular lattices. 
We give certain characterizations of implicative-orthomodular lattices, and we show that they are quantum-Wajsberg algebras and implicative-orthomodular algebras. 
The implicative-modular algebras are also introduced, and is proved that they form a proper subclass of implicative-orthomodular lattices. 

\begin{definition} \label{ioml-10} \emph{(\cite{PadRud}, \cite[Def. 3.1]{Ior32})} 
\emph{
An \emph{orthomodular lattice} (OML for short) is an ortholattice $(X,\wedge,\vee,^{'},0,1)$ verifying: 
for all $x,y\in X$,\\
$(OM)$ $(x\wedge y)\vee ((x\wedge y)^{'}\wedge x)=x$.  
}
\end{definition}

\begin{remark} \label{ioml-10-10}
$(1)$ Axiom $(OM)$ is equivalent to axiom: for all $x,y\in X$, \\ 
$(OM^{'})$ $x\le y$ implies $x\vee (x^{'}\wedge y)=y$ (where $x\le y$ iff $x=x\wedge y$) (see \cite{Burris}). \\
$(2)$ It was proved in \cite[Th. 2.3.9]{DvPu} that any orthomodular lattice $(X,\wedge,\vee,^{'},0,1)$ is a quantum-MV algebra $(X,\oplus,^{*},0,1)$, taking $\oplus$ as as the supremum $\vee$ and $^{*}$ as the orthocomplement $^{'}$. 
Conversely, if an ortholattice $(X,\wedge,\vee,^{'},0,1)$ determines a quantum-MV algebra taking $\oplus=\vee$ and 
$^*=^{'}$, then $X$ is an orthomodular lattice. 
\end{remark}

\begin{definition} \label{ioml-20} 
\emph{
An \emph{implicative-orthomodular lattice} (IOML for short) is an implicative involutive BE algebra $(X,\ra,^*,1)$  
verifying: for all $x,y,z\in X$:\\
($QW_2$) $x\ra (y\Cap (z\Cap x))=(x\ra y)\Cap (x\ra z)$.  
}
\end{definition}

\noindent
In the other words, an implicative-orthomodular lattice is an implicative-ortholattice satisfying 
condition $(QW_2)$ or, equivalently, an involutive BE algebra satisfying conditions $(Impl)$ and $(QW_2)$.  
Obviously, the implicative-orthomodular lattices are implicative-orthomodular algebras. 

\begin{lemma} \label{ioml-30} Let $X$ be an involutive BE algebra. 
The following are equivalent for all $x,y\in X$: \\
$(IOM)$ $x\Cap (y\ra x)=x;$ \\
$(IOM^{'})$ $x\Cap (x^*\ra y)=x;$ $\hspace*{5cm}$ \\
$(IOM^{''})$ $x\Cup (x\ra y)^*=x$. 
\end{lemma}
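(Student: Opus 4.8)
The plan is to establish the two equivalences $(IOM)\LR(IOM')$ and $(IOM')\LR(IOM'')$ separately, in each case exploiting that the map $x\mapsto x^*$ is a bijection on $X$ (by involutivity), so that a universally quantified identity remains equivalent after substituting a variable by its complement. This turns the whole lemma into a sequence of formal rewrites using the identities of Lemma \ref{qbe-10} and Proposition \ref{qbe-20}.

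For $(IOM)\LR(IOM')$, I would first rewrite the inner term of $(IOM')$ by means of Lemma \ref{qbe-10}$(5)$, namely $x^*\ra y=y^*\ra x$. This recasts $(IOM')$ as $x\Cap(y^*\ra x)=x$. Since this is asked to hold for all $y$ and $^*$ is a bijection, it is equivalent to the statement $x\Cap(y\ra x)=x$ for all $y$, which is precisely $(IOM)$. (The reverse direction may alternatively be read off from Lemma \ref{qbe-10}$(6)$, $x^*\ra y^*=y\ra x$, after substituting $y$ by $y^*$ in $(IOM)$.)

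For $(IOM')\LR(IOM'')$, the crucial step is to convert the $\Cup$ appearing in $(IOM'')$ into a $\Cap$ via Proposition \ref{qbe-20}$(3)$, which gives $x\Cup w=(x^*\Cap w^*)^*$. Taking $w:=(x\ra y)^*$, so that $w^*=x\ra y$ by involutivity, yields $x\Cup(x\ra y)^*=(x^*\Cap(x\ra y))^*$. Hence $(IOM'')$ is equivalent, after applying $^*$ and using $x^{**}=x$, to $x^*\Cap(x\ra y)=x^*$. On the other hand, substituting $x$ by $x^*$ in $(IOM')$ (again using that $^*$ is a bijection and $x^{**}=x$) produces exactly $x^*\Cap(x\ra y)=x^*$. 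Comparing the two gives $(IOM')\LR(IOM'')$, completing the chain.

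I do not expect a genuine obstacle here; the only points demanding care are keeping track of the universal quantifiers when replacing a variable by its complement, and choosing the argument $w=(x\ra y)^*$ in Proposition \ref{qbe-20}$(3)$ so that the double complement collapses cleanly. Every other manipulation is a direct application of involutivity together with the cited identities.
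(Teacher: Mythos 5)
Your proof is correct. The paper itself gives no argument here (it dismisses the lemma as ``straightforward''), and your chain of rewrites --- passing from $(IOM')$ to $(IOM)$ via $x^*\ra y=y^*\ra x$ (Lemma \ref{qbe-10}$(5)$) together with the bijectivity of $^*$, and from $(IOM'')$ to $(IOM')$ via $x\Cup w=(x^*\Cap w^*)^*$ (Proposition \ref{qbe-20}$(3)$) with $w=(x\ra y)^*$ followed by the substitution $x\mapsto x^*$ --- is exactly the kind of routine verification the author intended, carried out correctly and with the quantifier bookkeeping handled properly.
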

\begin{proof}
The proof is straightforward. 
\end{proof}

\begin{proposition} \label{ioml-40} Let $X$ be an involutive BE algebra satisfying the equivalent conditions 
$(IOM)$, $(IOM^{'})$, $(IOM^{''})$. Then the following hold for all $x,y,z\in X$: \\
$(1)$ $x\Cap (y\Cup x)=x$ and $x\Cup (y\Cap x)=x$. \\
If $x\le_Q y$, then: \\
$(2)$ $y\Cup x=y$ and $y^*\le_Q x^*;$ \\ 
$(3$ $y\ra z\le_Q x\ra z$ and $z\ra x\le_Q z\ra y;$ \\
$(4)$ $x\Cap z\le_Q y\Cap z$ and $x\Cup z\le_Q y\Cup z$. 
\end{proposition}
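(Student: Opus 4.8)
The plan is to prove the four statements in order, bootstrapping each from the previous ones together with the orthomodularity axioms $(IOM)$, $(IOM')$, $(IOM'')$ and the identities of Lemma~\ref{qbe-10} and Propositions~\ref{qbe-20}, \ref{qbe-30}. For part $(1)$ I would read the first identity directly off $(IOM)$: since $y\Cup x=(y\ra x)\ra x$ has the shape $u\ra x$ with $u=y\ra x$, axiom $(IOM)$ applied with $y\ra x$ in place of $y$ gives $x\Cap(y\Cup x)=x\Cap((y\ra x)\ra x)=x$. The second identity then follows by De~Morgan duality: using Proposition~\ref{qbe-20}$(3)$ twice, $x\Cup(y\Cap x)=(x^*\Cap(y\Cap x)^*)^*=(x^*\Cap(y^*\Cup x^*))^*$, and the first identity applied to $x^*,y^*$ collapses the inner term to $x^*$, whence $x\Cup(y\Cap x)=x$.

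Part $(2)$ is the linchpin, and I expect the derivation of $y\Cup x=y$ to be the main obstacle; the idea is to manufacture the correct instance of $(IOM'')$. Starting from $x\le_Q y$, Proposition~\ref{qbe-30}$(5)$ gives $(y\ra x)\odot y=x$; unfolding $\odot$ and applying Lemma~\ref{qbe-10}$(3)$ turns this into the key identity $y\ra(y\ra x)^*=x^*$. Feeding $b:=(y\ra x)^*$ and $a:=y$ into $(IOM'')$ makes $(a\ra b)^*=(y\ra(y\ra x)^*)^*=x$, so $(IOM'')$ literally reads $y\Cup x=y$. The relation $y^*\le_Q x^*$ is then immediate: by Proposition~\ref{qbe-20}$(3)$, $y^*\Cap x^*=(y\Cup x)^*=y^*$.

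For part $(3)$ I would prove the monotone-in-the-second-argument statement $z\ra x\le_Q z\ra y$ first. Part $(2)$ lets me write $y=(y\ra x)\ra x$, so by $(BE_4)$ we get $z\ra y=(y\ra x)\ra(z\ra x)$, which is exactly the shape $b\ra(z\ra x)$ needed to invoke $(IOM)$; hence $(z\ra x)\Cap(z\ra y)=z\ra x$, i.e. $z\ra x\le_Q z\ra y$. Recognizing $z\ra y$ as an $(IOM)$-instance is the clever step here. The companion statement $y\ra z\le_Q x\ra z$ follows through the involution: from $y^*\le_Q x^*$ (part $(2)$) the just-proved monotonicity gives $z^*\ra y^*\le_Q z^*\ra x^*$, and Lemma~\ref{qbe-10}$(6)$ rewrites these as $y\ra z$ and $x\ra z$.

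Finally, part $(4)$ is a routine iteration of part $(3)$. For $x\Cup z\le_Q y\Cup z$ I apply the antitone-in-the-first-argument statement of part $(3)$ twice: $x\le_Q y$ yields $y\ra z\le_Q x\ra z$, and a second application yields $(x\ra z)\ra z\le_Q(y\ra z)\ra z$, i.e. $x\Cup z\le_Q y\Cup z$. For $x\Cap z\le_Q y\Cap z$ I combine part $(2)$ (giving $y^*\le_Q x^*$) with two applications of part $(3)$ to obtain $(y^*\ra z^*)\ra z^*\le_Q(x^*\ra z^*)\ra z^*$, and conclude by the antitonicity of $^*$ from part $(2)$, since $((x^*\ra z^*)\ra z^*)^*=x\Cap z$ and $((y^*\ra z^*)\ra z^*)^*=y\Cap z$. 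The only genuinely delicate points are the two manufacturing steps — producing $y\ra(y\ra x)^*=x^*$ in part $(2)$ and recasting $z\ra y$ as an $(IOM)$-instance in part $(3)$ — with everything else being substitution into the cited identities.
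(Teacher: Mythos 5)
Your proposal is correct, and its overall architecture — read part $(1)$ off $(IOM)$ by shape-recognition, dualize with $\Cap/\Cup$, use $(2)$ to rewrite $y$ and invoke $(BE_4)$ in part $(3)$, and iterate $(3)$ together with the antitonicity of $^*$ for part $(4)$ — is essentially the paper's. The one genuine divergence is in part $(2)$: you manufacture the identity $y\ra(y\ra x)^*=x^*$ from Proposition \ref{qbe-30}$(5)$ and then feed it into $(IOM^{''})$, whereas the paper gets $y\Cup x=y\Cup(x\Cap y)=y$ in one line as an instance of the second identity of part $(1)$ (with the roles of $x$ and $y$ swapped); your route is valid but leans on an external lemma where a direct specialization suffices, so the step you flag as ``the main obstacle'' is actually immediate. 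In part $(3)$ you prove $z\ra x\le_Q z\ra y$ first and obtain $y\ra z\le_Q x\ra z$ by involution via Lemma \ref{qbe-10}$(6)$, while the paper proves both halves directly (the first using Proposition \ref{qbe-20}$(7)$ to expand $(x\Cap y)\ra z$, the second exactly as you do, modulo passing through $(IOM^{'})$ instead of $(IOM)$); your version is marginally more economical. No gaps.
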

\begin{proof}
$(1)$ Using $(IOM)$, we get: \\
$\hspace*{1.50cm}$ $x\Cap (y\Cup x)=x\Cap ((y\ra x)\ra x)=x;$ \\ 
$\hspace*{1.50cm}$ $x\Cup (y\Cap x)=x\Cup((y^*\ra x^*)\ra x^*)^*=(x^*\Cap ((y\ra x^*)\ra x^*))^*=(x^*)^*=x$. \\
$(2)$ From $x=x\Cap y$, using $(1)$ we get $y\Cup x=y\Cup (x\Cap y)=y$. 
Hence $y^*=(y\Cup x)^*=y^*\Cap x^*$, so that $y^*\le_Q x^*$. \\
$(3)$ Since by $(IOM^{'})$, $y\ra z\le_Q (y\ra z)^*\ra (y\ra x)^*$,  we have: \\
$\hspace*{2.00cm}$ $(y\ra z)\Cap (x\ra z)=(y\ra z)\Cap ((x\Cap y)\ra z)$ \\
$\hspace*{5.15cm}$ $=(y\ra z)\Cap(((x^*\ra y^*)\ra y^*)^*\ra z)$ \\
$\hspace*{5.15cm}$ $=(y\ra z)\Cap (z^*\ra ((x^*\ra y^*)\ra y^*))$ \\
$\hspace*{5.15cm}$ $=(y\ra z)\Cap ((x^*\ra y^*)\ra (z^*\ra y^*))$ \\
$\hspace*{5.15cm}$ $=(y\ra z)\Cap ((y\ra x)\ra (y\ra z))$ \\
$\hspace*{5.15cm}$ $=(y\ra z)\Cap ((y\ra z)^*\ra (y\ra x)^*)$ \\
$\hspace*{5.15cm}$ $=y\ra z$. \\
Thus that $y\ra z\le_Q x\ra z$. 
Applying $(2)$, $x\le_Q y$ implies $y=y\Cup x$, and by $(IOM^{'})$, 
$z\ra x\le_Q (z\ra x)^*\ra (y\ra x)^*$, hence: \\
$\hspace*{2.00cm}$ $(z\ra x)\Cap (z\ra y)=(z\ra x)\Cap (z\ra (y\Cup x))$ \\
$\hspace*{5.15cm}$ $=(z\ra x)\Cap (z\ra ((y\ra x)\ra x))$ \\
$\hspace*{5.15cm}$ $=(z\ra x)\Cap ((y\ra x)\ra (z\ra x))$ \\
$\hspace*{5.15cm}$ $=(z\ra x)\Cap ((z\ra x)^*\ra (y\ra x)^*)$ \\
$\hspace*{5.15cm}$ $=z\ra x$. \\
It follows that $z\ra x\le_Q z\ra y$. \\
$(4)$ Since $x\le_Q y$ implies $y^*\le_Q x^*$, using $(3)$ we get $x^*\ra z^*\le_Q y^*\ra z^*$ and 
$(y^*\ra z^*)\ra z^*\le_Q (x^*\ra z^*)\ra z^*$. 
Hence $((x^*\ra z^*)\ra z^*)^*\le_Q ((y^*\ra z^*)\ra z^*)^*$, that is $x\Cap z\le_Q y\Cap z$. 
Using again $(3)$, we also get $y\ra z\le_Q x\ra z$ and $(x\ra z)\ra z\le_Q (y\ra z)\ra z$. 
Thus $x\Cup z\le_Q y\Cup z$. 
\end{proof}

\begin{proposition} \label{ioml-50} Let $X$ be an involutive BE algebra satisfying the equivalent conditions 
$(IOM)$, $(IOM^{'})$, $(IOM^{''})$. Then the following hold for all $x,y,z\in X$: \\
$(1)$ $x\ra (y\Cap x)=x\ra y;$ \\  
$(2)$ $(x\Cup y)\ra (x\ra y)^*=y^*;$ \\ 
$(3)$ $x\Cap ((y\ra x)\Cap (z\ra x))=x;$ \\
$(4)$ $x\Cup ((y^*\ra x^*)^*\Cup (z^*\ra x^*))=x$.  
\end{proposition}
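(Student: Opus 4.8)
The plan is to prove the four identities one at a time, in each case reducing to one of the equivalent orthomodularity conditions $(IOM)$, $(IOM^{'})$, $(IOM^{''})$ of Lemma \ref{ioml-30}, combined with the implicative arithmetic of Lemmas \ref{qbe-10}, \ref{qbe-20} and \ref{qbe-30}. I would dispose of $(2)$ and $(3)$ first, since they are short, and reserve $(1)$ for last as the crux.

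For $(2)$, writing $p:=x\ra y$ so that $x\Cup y=p\ra y$, I would rewrite $(p\ra y)\ra p^{*}$ by $(BE_4)$ as $p\ra((p\ra y)\ra 0)=p\ra(p\ra y)^{*}$, and then apply Proposition \ref{qbe-30}$(8)$ in the form $x\ra(x\ra y)^{*}=(y\Cap x)^{*}$ to obtain $(x\Cup y)\ra(x\ra y)^{*}=(y\Cap(x\ra y))^{*}$. The claim then reduces to $y\Cap(x\ra y)=y$, which is exactly $(IOM)$ read with the roles of $x$ and $y$ exchanged. For $(3)$ I would avoid computation entirely: since $(IOM)$ gives $x\le_Q y\ra x$, the monotonicity of Proposition \ref{ioml-40}$(4)$ yields $x\Cap(z\ra x)\le_Q(y\ra x)\Cap(z\ra x)$; but $x\Cap(z\ra x)=x$ by $(IOM)$ again, so $x\le_Q(y\ra x)\Cap(z\ra x)$, which is precisely $(3)$.

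For $(1)$ I would first unfold the left-hand side: using Lemma \ref{qbe-10}$(3)$ and $(6)$ one gets $x\ra(y\Cap x)=((x\ra y)\ra x^{*})\ra x^{*}=p\Cup x^{*}$ with $p=x\ra y$. By the De Morgan law of Proposition \ref{qbe-20}$(3)$, $p\Cup x^{*}=(p^{*}\Cap x)^{*}$, so everything reduces to showing $p^{*}\le_Q x$. This is where $(IOM^{''})$ enters: it asserts $x\Cup p^{*}=x$, and I would combine it with the general fact that $u\Cup v=u$ implies $v\le_Q u$. That fact is itself a one-line consequence of $(IOM)$: putting $b:=u\ra v$, the hypothesis $u\Cup v=u$ reads $b\ra v=u$, so $v\Cap u=v\Cap(b\ra v)=v$ by $(IOM)$. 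Applied with $u=x$, $v=p^{*}$ this gives $p^{*}\Cap x=p^{*}$, whence $p\Cup x^{*}=(p^{*})^{*}=p=x\ra y$. I expect $(1)$ to be the main obstacle, precisely because $\Cap$ and $\Cup$ are noncommutative: one must rewrite the left-hand side in terms of $\Cup$, pass to the complementary form via Proposition \ref{qbe-20}$(3)$, and only then recognize $(IOM^{''})$ as the right tool, all the while keeping track of the order of the arguments.

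Finally, $(4)$ is the De Morgan dual of $(3)$: applying $^{*}$ to $(3)$ and using Proposition \ref{qbe-20}$(3)$ twice interchanges $\Cap$ with $\Cup$ and complements the two inner factors, after which the substitutions $x\mapsto x^{*}$, $y\mapsto y^{*}$, $z\mapsto z^{*}$ deliver the stated identity. The only work here is the bookkeeping of the involution $x^{**}=x$, which is routine.
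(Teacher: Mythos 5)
Your proof is correct, and for items $(2)$--$(4)$ it follows essentially the paper's own route: $(2)$ reduces to $y\Cap (x\ra y)=y$, an instance of $(IOM)$ (you pass through Proposition \ref{qbe-30}$(8)$ where the paper unfolds the definition of $\Cap$ directly, but the content is the same); $(3)$ is exactly the paper's argument via Proposition \ref{ioml-40}$(4)$; and $(4)$ is the same De Morgan dualization of $(3)$. Two remarks. First, your derivation of $(4)$, like the paper's own, actually yields $x\Cup ((y^*\ra x^*)^*\Cup (z^*\ra x^*)^*)=x$ with \emph{both} inner factors starred; the printed statement omits the star on $(z^*\ra x^*)$, and it is the fully starred version that is invoked later in the proof of Theorem \ref{ioml-60}, so this is a typo in the statement rather than a defect in your computation. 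Second, for $(1)$ your route genuinely differs from the printed proof and arguably improves on it: the paper complements $(IOM)$ and, after substitution, arrives at $(x\ra y)\ra (y\Cap x)=x$, which is not literally the claimed identity $x\ra (y\Cap x)=x\ra y$; you instead rewrite $x\ra (y\Cap x)=((x\ra y)\ra x^*)\ra x^*=(x\ra y)\Cup x^*=((x\ra y)^*\Cap x)^*$ and close with $(IOM^{''})$ combined with the observation (itself a one-line instance of $(IOM)$) that $u\Cup v=u$ forces $v\Cap u=v$. This lands exactly on the stated identity, keeps careful track of the noncommutativity of $\Cap$ and $\Cup$, and makes transparent which of the three equivalent orthomodularity conditions does the work at each step.
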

\begin{proof}
$(1)$ Applying (IOM), it follows that: \\
$\hspace*{2.00cm}$ $x^*=(x\Cap (y\ra x))^*=(x^*\ra (y\ra x)^*)\ra (y\ra x)^*$ \\
$\hspace*{2.50cm}$ $=((y\ra x)\ra x)\ra (y\ra x)^*=(y\Cup x)\ra (y\ra x)^*$ \\
$\hspace*{2.50cm}$ $=(y\ra x)\ra (y\Cup x)^*=(x^*\ra y^*)\ra (y^*\Cap x^*)$. \\
Replacing $x$ by $x^*$ and $y$ by $y^*$, we get $(x\ra y)\ra (y\Cap x)=x$. \\
$(2)$ Using (IOM), we have $y=y\Cap (x\ra y)=((y^*\ra (x\ra y)^*)\ra (x\ra y)^*)^*$. 
It follows that $y^*=(y^*\ra (x\ra y)^*)\ra (x\ra y)^*=((x\ra y)\ra y)\ra (x\ra y)^*=(x\Cup y)\ra (x\ra y)^*$. \\
$(3)$ Since by (IOM), $x\le_Q y\ra x, z\ra x$, applying Proposition \ref{ioml-40}$(4)$, we have: 
$x=x\Cap (z\ra x)\le_Q (y\ra x)\Cap (z\ra x)$, hence $x\Cap ((y\ra x)\Cap (z\ra x))=x$. \\
$(4)$ As a consequence of $(3)$, we have $x^*\Cup ((y\ra x)^*\Cup (z\ra x)^*))=x^*$. 
Replacing $x, y, z$ with $x^*, y^*, z^*$, respectively, we get $x\Cup ((y^*\ra x^*)^*\Cup (z^*\ra x^*))=x$. 
\end{proof}

\begin{theorem} \label{ioml-60} 
Let $(X,\ra,^*,1)$ be an involutive BE algebra. Then the following are equivalent: \\
$(a)$ $X$ is an implicative-orthomodular lattice; \\
$(b)$ $X$ is an implicative involutive BE satisfying one of conditions $(IOM)$, $(IOM^{'})$, $(IOM^{''})$. 
\end{theorem}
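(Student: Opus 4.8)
The plan is to notice that both $(a)$ and $(b)$ presuppose that $X$ is an implicative involutive BE algebra: by Definition \ref{ioml-20} the extra content of $(a)$ is exactly $(QW_2)$, and by Lemma \ref{ioml-30} the extra content of $(b)$ is any one of the equivalent conditions $(IOM)$, $(IOM^{'})$, $(IOM^{''})$. So the whole theorem reduces to proving that, under $(Impl)$ (equivalently, under $(iG)$ and $(Pimpl)$, by Lemma \ref{iol-30}), the conditions $(QW_2)$ and $(IOM)$ are equivalent. I would treat the two implications separately, using Lemma \ref{ioml-30} to pass between the three forms of the orthomodular identity whenever convenient.

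For $(b)\Rightarrow(a)$ I would assume $(IOM)$ and invoke the apparatus already developed under this hypothesis. Propositions \ref{ioml-40} and \ref{ioml-50} hold for any involutive BE algebra satisfying $(IOM)$, so I may use the $\le_Q$-monotonicity of $\ra$ and $\Cap$ (Proposition \ref{ioml-40}$(3)$,$(4)$) together with $x\ra(y\Cap x)=x\ra y$ (Proposition \ref{ioml-50}$(1)$) and $x\Cap((y\ra x)\Cap(z\ra x))=x$ (Proposition \ref{ioml-50}$(3)$). The idea is to bound each side of $(QW_2)$ by the other in the order $\le_Q$ and then appeal to the antisymmetry of $\le_Q$ (Proposition \ref{qbe-20}$(2)$) to conclude equality. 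This direction is essentially bookkeeping once the monotonicity statements are in hand.

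For $(a)\Rightarrow(b)$ I would assume $(QW_2)$ and aim at $(IOM)$. Setting $z:=x$ in $(QW_2)$ and using $x\Cap x=x$, $x\ra x=1$ and $1\Cap w=w$ (Proposition \ref{qbe-20}$(6)$) gives the one-sided identity $x\ra(y\Cap x)=x\ra y$, whose involutive dual (via Lemma \ref{qbe-10}$(6)$ and Proposition \ref{qbe-20}$(3)$) is $(y\Cup x)\ra x=y\ra x$; call it $(\star)$. Replacing $x,y,z$ by $x^{*},y^{*},z^{*}$ in $(QW_2)$ and simplifying with Lemma \ref{qbe-10} and Proposition \ref{qbe-20}$(3)$ produces the complemented form $(y\Cup(z\Cup x))\ra x=(y\ra x)\Cap(z\ra x)$; specializing $z:=y\ra x$ and collapsing the left-hand side to $1\ra x=x$ by means of $(Pimpl)$ and $(\star)$ yields $(y\ra x)\Cap(y\Cup x)=x$. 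Independently, Proposition \ref{qbe-30}$(8)$, Lemma \ref{qbe-10}$(3)$ and $(\star)$ give $x\Cap(y\ra x)=x\Cap(y\Cup x)$.

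The delicate point is that the naive simplifications are circular: $x\Cap(y\ra x)=x\Cap(y\Cup x)$ by the identity just noted, while $(y\Cup x)\Cap x=x$ holds in every involutive BE algebra (Proposition \ref{qbe-30}$(7)$), so what is missing is precisely a commutativity/absorption of $\Cap$ on comparable elements, the algebraic incarnation of orthomodularity. I would break the circle with the absorption lemma $(P\Cap Q)\Cap Q=P\Cap Q$, valid for all $P,Q$ under $(\star)$: writing $P\Cap Q=((P^{*}\ra Q^{*})\ra Q^{*})^{*}$ and expanding $((P\Cap Q)^{*}\ra Q^{*})\ra Q^{*}$ as $(N\Cup Q^{*})\ra Q^{*}$ with $N:=P^{*}\ra Q^{*}$, condition $(\star)$ collapses this to $N\ra Q^{*}=(P\Cap Q)^{*}$, which gives the claim. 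Applying the lemma with $P:=y\ra x$ and $Q:=y\Cup x$ and substituting $x=(y\ra x)\Cap(y\Cup x)$ turns $x\Cap(y\Cup x)$ into $((y\ra x)\Cap(y\Cup x))\Cap(y\Cup x)=(y\ra x)\Cap(y\Cup x)=x$; combined with $x\Cap(y\ra x)=x\Cap(y\Cup x)$ this is exactly $(IOM)$, and Lemma \ref{ioml-30} then supplies $(IOM^{'})$ and $(IOM^{''})$. I expect this absorption step, rather than either specialization of $(QW_2)$, to be the real obstacle, since it is what converts the one-sided information carried by $(QW_2)$ into the full orthomodular law.
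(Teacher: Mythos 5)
Your reduction of the theorem to the equivalence, under $(Impl)$, of $(QW_2)$ with the conditions of Lemma \ref{ioml-30} is the right reading of the statement, and your argument for $(a)\Rightarrow(b)$ does check out step by step, though it is far longer than necessary: the paper simply puts $y:=0$ in $(QW_2)$, which gives $x^*=x^*\Cap(x\ra z)$ at once (since $0\Cap w=0$ by Proposition \ref{qbe-20}$(6)$), and a starred substitution turns this into $(IOM^{'})$. Your detour through $(\star)$, the identity $(y\ra x)\Cap(y\Cup x)=x$ and the absorption lemma $(P\Cap Q)\Cap Q=P\Cap Q$ is valid but not needed for this direction.

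The genuine gap is in $(b)\Rightarrow(a)$, which is where the paper does all of its work and which you dismiss as ``essentially bookkeeping.'' Your plan is to show $L\le_Q R$ and $R\le_Q L$ for $L=x\ra(y\Cap(z\Cap x))$ and $R=(x\ra y)\Cap(x\ra z)$ and invoke antisymmetry. But at this stage of the development $\Cap$ is not known to be an infimum for $\le_Q$, so $a\le_Q b$ and $a\le_Q c$ do not combine to give $a\le_Q b\Cap c$, and the monotonicity facts of Proposition \ref{ioml-40}$(3)$,$(4)$ act on one argument at a time; to certify either bound you would have to produce $L=L\Cap R$ or $R=R\Cap L$ outright, which is no easier than the identity $L=R$ itself. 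The paper's proof of this direction is a direct equational chain: it rewrites $y\Cap(z\Cap x)$ as $(y^*\Cup(z^*\Cup x^*))^*$, pulls the inner term through $x\ra(\cdot)$ using Lemma \ref{qbe-10}$(5)$ and Proposition \ref{qbe-30}$(3)$ to reach $x\ra\bigl(x\ra((y^*\ra x^*)^*\Cup(z^*\ra x^*)^*)\bigr)^*$, then absorbs an outer $x\Cup(\cdots)$ by Proposition \ref{ioml-50}$(4)$ and collapses the result with Proposition \ref{ioml-50}$(2)$, i.e.\ $(x\Cup y)\ra(x\ra y)^*=y^*$. You cite Proposition \ref{ioml-50}$(1)$ and $(3)$ but not $(2)$, and it is $(2)$ together with $(4)$ that carries this direction; without an argument at that level of detail the implication $(b)\Rightarrow(a)$ is not established.
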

\begin{proof}
$(a)\Rightarrow (b)$ Let $X$ be an implicative-orthomodular lattice, so that $X$ is an involutive BE algebra 
satisfying axioms $(Impl)$ and $(QW_2)$. 
Applying $(QW_2)$ for $y:=0$, we get $x^*=x^*\Cap (x\ra z)$, and replacing $x$ with $x^*$ and $z$ with $y$,  
it follows that $x=x\Cap (x^*\ra y)$. Hence $X$ satisfies condition $(IOM^{'})$, and according to 
Lemma \ref{ioml-30}, conditions $(IOM)$ and $(IOM^{''})$ are also satisfied. \\  
$(b)\Rightarrow (a)$ Suppose that $X$ satisfies $(Impl)$ and the equivalent conditions $(IOM)$, $(IOM^{'})$, $(IOM^{''})$. 
It follows that: \\
$\hspace*{0.50cm}$ $x\ra (y\Cap (z\Cap x))=x\ra (y\Cap (z^*\Cup x^*)^*)$ \\
$\hspace*{3.50cm}$ $=x\ra (y^*\Cup (z^*\Cup x^*))^*$ \\ 
$\hspace*{3.50cm}$ $=x\ra (y^*\Cup ((z^*\ra x^*)\ra x^*))^*$ \\
$\hspace*{3.50cm}$ $=x\ra (y^*\Cup (x\ra (z^*\ra x^*)^*))^*$ (Lemma \ref{qbe-10}$(5)$) \\
$\hspace*{3.50cm}$ $=x\ra (x\ra ((y^*\ra x^*)^*\Cup (z^*\ra x^*)^*))^*$ (Prop. \ref{qbe-30}$(3)$) \\
$\hspace*{3.50cm}$ $=(x\Cup ((y^*\ra x^*)^*\Cup (z^*\ra x^*)^*))\ra (x\ra ((y^*\ra x^*)^*\Cup (z^*\ra x^*)^*))^*$ \\
                                                      $\hspace*{12.00cm}$ (Prop. \ref{ioml-50}$(4)$) \\
$\hspace*{3.50cm}$ $=((y^*\ra x^*)^*\Cup (z^*\ra x^*)^*)^*$ (Prop. \ref{ioml-50}$(2)$) \\
$\hspace*{3.50cm}$ $=(y^*\ra x^*)\Cap (z^*\ra x^*)=(x\ra y)\Cap (x\ra z)$. \\
Hence axiom $(QW_2)$ is satisfied, so that $X$ is an implicative-orthomodular lattice. 
\end{proof}

\begin{proposition} \label{ioml-70} 
The orthomodular lattices $(X,\wedge,\vee,^{'},0,1)$ are defitionally equivalent to implicative-orthomodular 
lattices $(X,\ra,^*,1)$, by the mutually inverse transformations \\
$\hspace*{3cm}$ $\varphi:$\hspace*{0.2cm}$ x\ra y:=(x\wedge y^{'})^{'}$ $\hspace*{0.1cm}$ and  
                $\hspace*{0.1cm}$ $\psi:$\hspace*{0.2cm}$ x\wedge y:=(x\ra y^*)^*$, \\
and the relation $x\vee y:=(x^{'}\wedge y^{'})^{'}=x^*\ra y$.                   
\end{proposition}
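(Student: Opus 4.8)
The plan is to avoid re-proving the ortholattice backbone and instead combine Proposition~\ref{iol-50} with Theorem~\ref{ioml-60}, so that only the extra axiom has to be matched. By Theorem~\ref{ioml-60}, an involutive BE algebra is an implicative-orthomodular lattice if and only if it is implicative and satisfies $(IOM)$: $x\Cap (y\ra x)=x$. By Proposition~\ref{iol-50}, the transformations $\varphi$ and $\psi$ already give a definitional equivalence between ortholattices and implicative involutive BE algebras, under which $x^*=x^{'}$, $x\ra y=x^{'}\vee y$, and the recovered meet and join coincide with the original $\wedge,\vee$. Hence the mutual-inverse property of $\varphi,\psi$ is inherited from Proposition~\ref{iol-50}, and it remains only to show that, under this correspondence, the orthomodularity axiom $(OM)$ is equivalent to condition $(IOM)$.

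First I would carry out the one preliminary computation that the whole argument rests on, namely expressing $\Cap$ in lattice terms. Using $x\ra y=x^{'}\vee y$ together with $x\Cap y=(x^*\Cup y^*)^*$ (Proposition~\ref{qbe-20}$(3)$) and the De Morgan laws, one gets $x\Cup y=(x\wedge y^{'})\vee y$ and dually $x\Cap y=(x\vee y^{'})\wedge y$; that is, $x\Cap y$ is the Sasaki projection $y\wedge (y^{'}\vee x)$. I would also record that, by Lemma~\ref{qbe-10}$(1)$, $x\le y\ra x=y^{'}\vee x$ always holds, and that, via Remark~\ref{ioml-10-10}$(1)$ and the De Morgan laws, orthomodularity can be rewritten in the form: $a\le b$ implies $b\wedge (b^{'}\vee a)=a$.

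With these facts in hand both implications become simple substitutions. For the direction from orthomodular lattices to implicative-orthomodular lattices, I would apply the last form of orthomodularity with $a:=x$ and $b:=y^{'}\vee x$; since $x\le y^{'}\vee x$, it yields $(y^{'}\vee x)\wedge ((y^{'}\vee x)^{'}\vee x)=x$, which is exactly $x\Cap (y\ra x)=x$, i.e. $(IOM)$. Conversely, starting from an implicative-orthomodular lattice, $(IOM)$ holds by Theorem~\ref{ioml-60}; given $a\le b$ in the associated ortholattice, I would instantiate $(IOM)$ at $x:=a$ and $y:=b^{'}$, so that $y\ra x=(b^{'})^{'}\vee a=b\vee a=b$, whereupon $(IOM)$ reads $a\Cap b=b\wedge (b^{'}\vee a)=a$. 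This is precisely orthomodularity, so $(OM)$ holds.

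The hard part will be the bookkeeping in the preliminary step: correctly unfolding $x\Cap y$ into the Sasaki projection $(x\vee y^{'})\wedge y$ through the involutive BE-algebra definitions and De Morgan laws, and confirming that $\varphi$ and $\psi$ genuinely round-trip, which is where I lean on Proposition~\ref{iol-50} rather than re-verify $(L_1)$--$(L_9)$. Once $\Cap$ is identified with the Sasaki projection and orthomodularity is put in the form $a\le b\Rightarrow b\wedge(b^{'}\vee a)=a$, the two implications are immediate and the equivalence $(OM)\Leftrightarrow(IOM)$ closes the proof.
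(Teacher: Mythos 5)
Your proof is correct, and it shares the paper's overall architecture --- lean on Proposition \ref{iol-50} for the ortholattice backbone and on Theorem \ref{ioml-60} to pass from one of the equivalent conditions $(IOM)$, $(IOM^{'})$, $(IOM^{''})$ to $(QW_2)$ --- but the middle step is genuinely different. The paper never leaves the arrow notation: it unfolds $(OM)$ through the transformations $\varphi,\psi$ and rewrites it equationally, step by step, into $x\Cup (x\ra y^{*})^{*}=x$, i.e. $(IOM^{''})$. You instead translate into lattice language, identify $x\Cap y$ with the Sasaki projection $y\wedge (y^{'}\vee x)$, replace $(OM)$ by its order-theoretic form $a\le b\Rightarrow b\wedge (b^{'}\vee a)=a$ (the De Morgan dual of $(OM^{'})$ from Remark \ref{ioml-10-10}), and match that against $(IOM)$ by the substitutions $a:=x$, $b:=y^{'}\vee x$ in one direction and $x:=a$, $y:=b^{'}$ in the other; I checked the Sasaki computation ($x\Cap y=((x^{*}\ra y^{*})\ra y^{*})^{*}=(x\vee y^{'})\wedge y$ under $x\ra y=x^{'}\vee y$) and both substitutions, and they go through. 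What your route buys is conceptual transparency --- it makes visible that $(IOM)$ is exactly the Sasaki-projection formulation of orthomodularity, which also connects nicely to the paper's closing remarks about the Sasaki projection --- at the cost of invoking the implicational form of orthomodularity from Remark \ref{ioml-10-10} (cited there to Burris--Sankappanavar rather than proved); the paper's equational chain is more self-contained and stays uniformly inside the BE-algebra calculus, landing on $(IOM^{''})$ rather than $(IOM)$, which by Lemma \ref{ioml-30} makes no difference.
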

\begin{proof} 
Using the transformations $\varphi$ and $\psi$, we have: \\
$\hspace*{2cm}$ $x^*=x\ra 0=(x\wedge 0^{'})^{'}=(x\wedge 1)^{'}=x^{'}$ and \\ 
$\hspace*{2cm}$ $x^{'}=(x\wedge 1)^{'}=(x\wedge 0^{'})^{'}=x\ra 0=x^*$, \\
for all $x\in X$. 
According to Proposition \ref{iol-50}, the ortholattices are defitionally equivalent to implicative-ortholattices.
We prove that axioms $(OM)$ and $(QW_2)$ are equivalent. 
By mutually inverse transformations $\varphi$ and $\psi$, we get: \\
$\hspace*{2.00cm}$ $(x\wedge y)\vee ((x\wedge y)^*\wedge x)=x$ iff \\
$\hspace*{2.00cm}$ $(x\wedge y)^*\ra ((x\wedge y)^*\ra x^*)^*=x$ iff \\
$\hspace*{2.00cm}$ $((x\wedge y)^*\ra x^*)\ra (x\wedge y)=x$ iff \\
$\hspace*{2.00cm}$ $((x\ra y^*)\ra x^*)\ra (x\ra y^*)^*=x$ iff \\
$\hspace*{2.00cm}$ $(x\ra (x\ra y^*)^*)\ra (x\ra y^*)^*=x$ iff \\
$\hspace*{2.00cm}$ $x\Cup (x\ra y^*)^*=x$ iff \\
$\hspace*{2.00cm}$ $(IOM^{''})$ iff \\
$\hspace*{2.00cm}$ $(QW_2)$ (Theorem \ref{ioml-60}). 
\end{proof}

\begin{proposition} \label{ioml-80} 
Let $(X,\ra,^*,1)$ be an implicative-orthomodular lattice and let $x,y\in X$. 
Then $x\le_Q y$ if and only if $y\ra x^*=x^*$. 
\end{proposition}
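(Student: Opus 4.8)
The plan is to prove the two implications separately, translating the condition $y\ra x^*=x^*$ back and forth against the definition $x\le_Q y$ iff $x=x\Cap y$, and exploiting that an implicative-orthomodular lattice satisfies $(Impl)$, $(iG)$ and, by Theorem \ref{ioml-60}, the orthomodularity condition $(IOM^{'})$.

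For the forward implication, I would start from $x\le_Q y$, i.e. $x=x\Cap y$, and pass to complements: by Proposition \ref{qbe-20}$(3)$ this reads $x^*=x^*\Cup y^*=(x^*\ra y^*)\ra y^*$. Then I compute $y\ra x^*$ directly. Substituting the above into the antecedent and applying $(BE_4)$ to swap $y$ past $x^*\ra y^*$ gives $y\ra x^*=(x^*\ra y^*)\ra(y\ra y^*)$, and since $(iG)$ yields $y\ra y^*=y^*$, the right-hand side collapses to $(x^*\ra y^*)\ra y^*=x^*$. Thus $y\ra x^*=x^*$, using only $(BE_4)$, $(iG)$ and the complement law.

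The reverse implication is where the real work lies. Assuming $y\ra x^*=x^*$, the goal is $x=x\Cap y$. The key auxiliary step I expect to need is the identity $x^*\ra y=y$: replacing the leading $x^*$ by $y\ra x^*$ (the hypothesis) rewrites $x^*\ra y$ as $(y\ra x^*)\ra y$, which equals $y$ by $(Impl)$. Once this is in hand, I invoke the orthomodularity axiom $(IOM^{'})$, namely $x\Cap(x^*\ra y)=x$ (available here by Theorem \ref{ioml-60}), and substitute $x^*\ra y=y$ to conclude $x\Cap y=x$, that is $x\le_Q y$.

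I expect the main obstacle to be spotting the auxiliary identity $x^*\ra y=y$ in the reverse direction; everything else is a short manipulation with $(BE_4)$, $(iG)$, $(Impl)$ and Proposition \ref{qbe-20}. As a backup I would keep in mind that the hypothesis can be symmetrized to $x\ra y^*=x^*$ via Lemma \ref{qbe-10}$(3)$, or recast as $x\odot y=x$, in case a cleaner route through the $\odot$-identities of Proposition \ref{qbe-30} is preferable, but the argument sketched above avoids needing these.
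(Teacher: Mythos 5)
Your proof is correct. The forward direction is essentially identical to the paper's: both pass from $x=x\Cap y$ to $x^*=(x^*\ra y^*)\ra y^*$, apply $(BE_4)$ to get $y\ra x^*=(x^*\ra y^*)\ra (y\ra y^*)$, and finish with $(iG)$. The reverse direction, however, is genuinely different and noticeably shorter than the paper's. The paper computes $x\Cap y$ directly by a chain of rewrites: it substitutes the hypothesis into $x^*\ra y^*$, uses Lemma \ref{qbe-10} to shuffle negations, recognizes $x\Cup y^*$, and finally invokes Proposition \ref{ioml-50}$(1)$ to collapse $y\ra (x^*\Cap y)$ to $y\ra x^*$. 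You instead observe that the hypothesis lets you rewrite $x^*\ra y$ as $(y\ra x^*)\ra y$, which is $y$ by $(Impl)$, and then a single application of $(IOM^{'})$ (available via Theorem \ref{ioml-60}) gives $x=x\Cap (x^*\ra y)=x\Cap y$. Both arguments are valid and rest on the same underlying facts (implicativity plus the orthomodularity conditions of Theorem \ref{ioml-60}); yours isolates the role of $(Impl)$ more cleanly and avoids Proposition \ref{ioml-50} entirely, at the cost of the small trick of spotting the identity $x^*\ra y=y$, which you correctly flag as the crux.
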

\begin{proof}
Assume $x\le_Q y$, that is $x\Cap y=x$. Since by Lemma \ref{iol-30}, $(Impl)$ implies $(iG)$, we have 
$y\ra y^*=y^*$. It follows that: \\
$\hspace*{2.00cm}$ $y\ra x^*=y\ra (x\Cap y)^*=y\ra ((x^*\ra y^*)\ra y^*)$ \\
$\hspace*{3.35cm}$ $=(x^*\ra y^*)\ra (y\ra y^*)=(x^*\ra y^*)\ra y^*=(x\Cap y)^*=x^*$. \\
Conversely, consider $x,y\in X$ such that $y\ra x^*=x^*$. 
Then we get: \\
$\hspace*{2.00cm}$ $x\Cap y=((x^*\ra y^*)\ra y^*)^*=(((y\ra x^*)\ra y^*)\ra y^*)^*$ \\
$\hspace*{2.95cm}$ $=(((x\ra y^*)\ra y^*)\ra y^*)^*=((x\Cup y^*)\ra y^*)^*$ \\
$\hspace*{2.95cm}$ $=(y\ra (x\Cup y^*)^*)^*=(y\ra (x^*\Cap y))^*=(y\ra x^*)^*$  (Prop. \ref{ioml-50}$(1)$) \\ 
$\hspace*{2.95cm}$ $=(x^*)^*=x$. \\
Hence $x\Cap y=x$, that is $x\le_Q y$. 
\end{proof}

\noindent
We prove the following result using an idea from \cite{Ciu82}. 
 
\begin{theorem} \label{ioml-90} Let $(X,\ra,^*,1)$ be an implicative involutive BE algebra. 
The following are equivalent: \\
$(a)$ $X$ satisfies axiom $(QW_1);$ \\
$(b)$ $X$ satisfies axiom $(QW_2);$ \\
$(c)$ $X$ satisfies axiom $(QW)$. 
\end{theorem}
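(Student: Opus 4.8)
The plan is to reduce the three-way equivalence to a single genuinely new fact and then to reuse the apparatus already assembled for implicative-orthomodular lattices. The backbone is the identity $(QW)\LR (QW_1)\wedge (QW_2)$, valid in \emph{every} involutive BE algebra (\cite{Ciu78}, recalled in Section~2). This costs almost nothing and does the bulk of the bookkeeping: $(c)\Ri (a)$ and $(c)\Ri (b)$ are immediate, and conversely $(QW_1)\wedge (QW_2)\Ri (QW)$ (substitute $x\Cap y$ for the first argument of $(QW_2)$ and simplify the resulting $x\ra (x\Cap y)$ by $(QW_1)$). Thus $(c)\LR (a)\wedge (b)$, and the theorem collapses to proving, under $(Impl)$, the two cross-implications $(a)\Ri (b)$ and $(b)\Ri (a)$: each of $(a),(b)$ will then force $(a)\wedge (b)$ and hence $(c)$. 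Since Theorem~\ref{ioml-60} identifies $(b)$ with the orthomodular law $(IOM)$, it suffices to show that, for an implicative $X$, axiom $(QW_1)$ is equivalent to $(IOM)$.

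For the direction $(b)\Ri (a)$, i.e. $(IOM)\Ri (QW_1)$, I would first put $(QW_1)$ into a workable shape. Using $x\Cap y=(x^*\Cup y^*)^*$ (Proposition~\ref{qbe-20}(3)) and the transfer identities $x\ra w^*=w\ra x^*$ and $x\ra y=y^*\ra x^*$ (Lemma~\ref{qbe-10}(3),(6)), one rewrites $x\ra (x\Cap y)=(x^*\Cup y^*)\ra x^*$, so that $(QW_1)$ becomes the equivalent identity $(u\Cup v)\ra u=v\ra u$ for all $u,v$. I would then derive this from the $\le_Q$-monotonicity and absorption results available for algebras satisfying $(IOM)$, namely Propositions~\ref{ioml-40} and~\ref{ioml-50} (in particular $x\Cap (y\Cup x)=x$ and the reversed identity $x\ra (y\Cap x)=x\ra y$), together with the cancellation law of Proposition~\ref{qbe-30}(1).

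For the reverse implication $(a)\Ri (b)$, i.e. $(QW_1)\Ri (IOM)$, I would aim at $(IOM)$ in the form $x\Cap (y\ra x)=x$. A purely formal manipulation using only Lemma~\ref{qbe-10}(3) yields the bridge identity $x\Cap (y\ra x)=((y\Cup x)\ra (y\ra x)^*)^*$, so that $(IOM)$ is equivalent to $(y\Cup x)\ra (y\ra x)^*=x^*$ (which is exactly the statement of Proposition~\ref{ioml-50}(2), but now to be extracted from $(QW_1)$ rather than from $(IOM)$). I would prove this last identity by feeding the implicative identities $(iG)$, $(Pimpl)$ and Lemma~\ref{iol-30-05} into a suitable substitution of $(QW_1)$ (equivalently, of its reformulation $(u\Cup v)\ra u=v\ra u$), reducing the left-hand side to $x^*$. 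Once $(IOM)$ is secured, the computation $(IOM)\Ri (QW_2)$ already carried out inside the proof of Theorem~\ref{ioml-60} delivers $(b)$, closing the loop.

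The hard part will be precisely $(QW_1)\Ri (IOM)$. This is where implicativity is indispensable: in a general involutive BE algebra the conditions $(QW_1)$ and $(QW_2)$ are independent (the classes $\mathbf{preW}$ and $\mathbf{IOM}$ are incomparable), so the passage from the single-variable orthomodular-free identity $(QW_1)$ up to the orthomodular law cannot be formal and must essentially invoke $(Impl)$ through $(iG)$ and the implicative absorption $(Iabs$-$i)$. The author's remark that the proof uses ``an idea from \cite{Ciu82}'' strongly suggests that the decisive move is exactly such a substitution into $(QW_1)$ combined with implicative absorption, which is the step I would expect to require the most care.
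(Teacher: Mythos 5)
Your high-level architecture is essentially the paper's: part $(c)$ is disposed of by the known decomposition $(QW)\LR (QW_1)\wedge (QW_2)$, and the theorem reduces, via Theorem \ref{ioml-60}, to showing that under $(Impl)$ axiom $(QW_1)$ is equivalent to the orthomodular identities $(IOM)$/$(IOM^{''})$. You also correctly locate the crux in $(QW_1)\Ri (IOM)$ and correctly predict that it must go through the implicative identities of Lemma \ref{iol-30-05}. However, that crux is exactly what you do not prove. The paper's argument for $(a)\Ri (b)$ is a twelve-line rewriting chain with two nonobvious substitutions ($y\mapsto y\ra x^*$ in $(QW_1)$, followed by $x\mapsto x\ra y^*$, $y\mapsto x^*$), each collapse enabled by a specific item of Lemma \ref{iol-30-05}; ``feeding the implicative identities into a suitable substitution of $(QW_1)$'' names the method but supplies neither the substitutions nor the reductions, so the proposal has a genuine gap at precisely the step that carries the mathematical content of the theorem.

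There is also a smaller unaddressed wrinkle in your route for $(b)\Ri (a)$. You plan to obtain $(QW_1)$ from Proposition \ref{ioml-50}$(1)$, i.e. $x\ra (y\Cap x)=x\ra y$, plus monotonicity and cancellation. But $\Cap$ is not commutative in this setting, so $x\ra (y\Cap x)=x\ra y$ does not formally yield $x\ra (x\Cap y)=x\ra y$; and to invoke the cancellation law of Proposition \ref{qbe-30}$(1)$ you would first need $x\Cap y\le_Q x$ and $y\Cap x\le_Q x$ in the $\le_Q$ sense, which is not among the listed consequences of $(IOM)$. The paper avoids this entirely: it derives $(QW_1)$ by expanding $x\ra (x\Cap y)$ and using Lemma \ref{iol-30-05}$(3)$ (which requires $(Impl)$) together with Lemma \ref{qbe-10}$(7)$ before applying $(IOM)$ once at the end. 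So this direction, too, genuinely uses implicativity, and your sketch as written does not close it.
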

\begin{proof}
By hypothesis, $X$ verifies condition $(Impl)$. \\
$(a)\Rightarrow (b)$ Assume that $X$ satisfies axiom $(QW_1)$, and we get succesively: \\
$\hspace*{2.00cm}$ $x\ra (x\Cap y)=x\ra y$ (by $(QW_1)$), \\
$\hspace*{2.00cm}$ $x\ra ((x^*\ra y^*)\ra y^*)^*=x\ra y$, \\ 
$\hspace*{2.00cm}$ $x\ra (y\ra (x^*\ra y^*)^*)^*=x\ra y$ (Lemma  \ref{qbe-10}$(6)$), \\ 
$\hspace*{2.00cm}$ $x\ra (y\ra (y\ra x)^*)^*=x\ra y$ (Lemma \ref{qbe-10}$(6)$) \\ 
$\hspace*{2.00cm}$ $x\ra ((y\ra x^*)\ra ((y\ra x^*)\ra x)^*)^*=x\ra (y\ra x^*)$ (replacing $y$ by $y\ra x^*$), \\ 
$\hspace*{2.00cm}$ $x\ra ((y\ra x^*)\ra x^*)^*=y\ra x^*$ (Lemma \ref{iol-30-05}$(2),(5)$), \\
$\hspace*{2.00cm}$ $x\ra (x\ra (y\ra x^*)^*)=y\ra x^*$ (Lemma  \ref{qbe-10}$(6)$), \\
$\hspace*{2.00cm}$ $(x\ra y^*)\ra ((x\ra y^*)\ra (x^*\ra (x\ra y^*)^*)^*)=x^*\ra (x\ra y^*)^*$, \\
$\hspace*{9.00cm}$ (replacing $x$ by $x\ra y^*$ and $y$ by $x^*$) \\
$\hspace*{2.00cm}$ $(x\ra y^*)\ra ((x\ra y^*)\ra x^*)^*=x$ (Lemma \ref{iol-30-05}$(4)$), \\
$\hspace*{2.00cm}$ $((x\ra y^*)\ra x^*)\ra (x\ra y^*)^*=x$ (Lemma  \ref{qbe-10}$(6)$), \\
$\hspace*{2.00cm}$ $(x\ra (x\ra y^*)^*)\ra (x\ra y^*)^*=x$ (Lemma  \ref{qbe-10}$(6)$), \\
$\hspace*{2.00cm}$ $x\Cup (x\ra y^*)^*=x$, \\
$\hspace*{2.00cm}$ $x\Cup (x\ra y)^*=x$ (replacing $y$ by $y^*$). \\
Hence $X$ satisfies condition $(IOM^{''})$, and by Theorem \ref{ioml-60}, it follows that 
$X$ satisfies axiom $(QW_2)$. \\
$(b)\Rightarrow (c)$ If $X$ satisfies axiom $(QW_2)$, then by Theorem\ref{ioml-60}, it satisfies conditions 
$(IOM)$, $(IOM^{'})$, $(IOM^{''})$. We have: \\
$\hspace*{2.00cm}$ $x\ra (x\Cap y)=x\ra ((x^*\ra y^*)\ra y^*)^*$ \\ 
$\hspace*{4.10cm}$ $=(x\ra (x^*\ra y)^*)^*\ra ((x^*\ra y^*)\ra y^*)^*$ (Lemma \ref{iol-30-05}$(3)$) \\ 
$\hspace*{4.10cm}$ $=x\ra ((x^*\ra y^*)\ra ((x^*\ra y^*)\ra y^*)^*)$ (Lemma \ref{qbe-10}$(7)$) \\
$\hspace*{4.10cm}$ $=x\ra (((x^*\ra y^*)\ra y^*)\ra (x^*\ra y^*)^*)$ (Lemma \ref{qbe-10}$(6)$) \\
$\hspace*{4.10cm}$ $=x\ra ((y\ra (x^*\ra y^*)^*)\ra (x^*\ra y^*)^*)$ (Lemma \ref{qbe-10}$(6)$) \\
$\hspace*{4.10cm}$ $=x\ra (y\Cup (x^*\ra y^*)^*)$ \\
$\hspace*{4.10cm}$ $=x\ra (y^*\Cap (x^*\ra y^*))^*$ \\
$\hspace*{4.10cm}$ $=x\ra (y^*)^*$ (by $(IOM)$) \\
$\hspace*{4.10cm}$ $=x\ra y$. \\
Hence $X$ verifies axiom $(QW_1)$. 
It follows that $X$ satisfies axioms $(QW_1)$ and $(QW_2)$, so that $X$ satisfies $(QW)$. \\
$(c)\Rightarrow (a)$ It is obvious, since $(QW)$ is equivalent to axioms $(QW_1)$ and $(QW_2)$.  
\end{proof}

\begin{corollary} \label{ioml-100} 
The implicative-orthomodular lattices are quantum-Wajsberg algebras, implicative-orthomodular algebras and 
pre-Wajsberg algebras.  
\end{corollary}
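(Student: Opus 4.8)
The plan is to derive the whole statement directly from Theorem~\ref{ioml-90}, which already carries all the substantive content; the corollary is then pure bookkeeping. By Definition~\ref{ioml-20}, an implicative-orthomodular lattice is exactly an implicative involutive BE algebra satisfying axiom $(QW_2)$. In the terminology of Theorem~\ref{ioml-90}, this is precisely hypothesis $(b)$ applied to our algebra $X$.

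First I would invoke the equivalence $(a)\Leftrightarrow(b)\Leftrightarrow(c)$ established in Theorem~\ref{ioml-90}. Since $X$ is implicative involutive and satisfies $(QW_2)$, condition $(b)$ holds, whence conditions $(a)$ and $(c)$ hold as well; that is, $X$ also satisfies $(QW_1)$ and $(QW)$.

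Then I would read off each claimed membership from the definitions recalled in the Preliminaries. An involutive BE algebra satisfying $(QW)$ is by definition a quantum-Wajsberg algebra, so $X$ is a QW algebra. An involutive BE algebra verifying $(QW_1)$ is a pre-Wajsberg algebra, so $X$ is a preW algebra. Finally, an involutive BE algebra verifying $(QW_2)$ is an implicative-orthomodular algebra; this is immediate from the hypothesis and was already observed right after Definition~\ref{ioml-20}, so $X$ is an IOM algebra.

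I do not expect any genuine obstacle here, since the difficult implication $(a)\Rightarrow(b)$ (and the chain to $(QW)$) was absorbed into Theorem~\ref{ioml-90}. The only point requiring care is matching each axiom to the correct target class, namely $(QW)\mapsto$ QW, $(QW_1)\mapsto$ preW, and $(QW_2)\mapsto$ IOM, so that none of the three membership claims is miscited.
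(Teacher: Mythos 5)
Your proposal is correct and matches the paper's intent exactly: the corollary is stated without proof precisely because it follows immediately from Theorem~\ref{ioml-90} together with the definitions of the classes $\mathbf{QW}$, $\mathbf{preW}$ and $\mathbf{IOM}$ via the axioms $(QW)$, $(QW_1)$ and $(QW_2)$. Your matching of each axiom to its class is the same bookkeeping the paper relies on.
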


\begin{proposition} \label{ioml-110} 
The implicative-orthomodular lattices are meta-Wajsberg algebras.  
\end{proposition}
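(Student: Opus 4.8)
The plan is to avoid any fresh computation and instead read the conclusion off the results already in hand. The defining condition of a meta-Wajsberg algebra is $(QW_3)$, namely $(x\Cap y)\ra (y\Cap x)=1$ for all $x,y\in X$, so it suffices to show that every implicative-orthomodular lattice satisfies $(QW_3)$.

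First I would invoke Corollary \ref{ioml-100}, which has just established that every implicative-orthomodular lattice is a quantum-Wajsberg algebra (and, in particular, a pre-Wajsberg algebra). From here there are two equally short routes. The preliminaries record, on the authority of \cite{Ciu78}, that every quantum-Wajsberg algebra satisfies $(QW_3)$; applying this to the quantum-Wajsberg algebra $X$ gives $(QW_3)$ directly, hence $X\in\mathbf{metaW}$. Alternatively, one may argue purely at the level of classes: the identity $\mathbf{QW}=\mathbf{metaW}\bigcap\mathbf{IOM}$ from the preliminaries yields $\mathbf{QW}\subseteq\mathbf{metaW}$, and since $X\in\mathbf{QW}$ we conclude $X\in\mathbf{metaW}$. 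A third variant uses that $X$ is a pre-Wajsberg algebra together with the recorded inclusion $\mathbf{preW}\subset\mathbf{metaW}$.

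There is essentially no obstacle here: all the substantive work was carried out in Theorem \ref{ioml-90} and Corollary \ref{ioml-100}, which place the implicative-orthomodular lattices inside $\mathbf{QW}$, and the passage from $\mathbf{QW}$ to $\mathbf{metaW}$ is a single recorded fact. If one preferred a self-contained derivation that does not cite the $(QW_3)$ property of quantum-Wajsberg algebras, the only nontrivial point would be to verify $x\Cap y\le y\Cap x$ directly, which one could attempt from the identities for $\Cap$ and $\Cup$ in Proposition \ref{qbe-20} and Proposition \ref{qbe-30}; but given Corollary \ref{ioml-100} this extra effort is unnecessary.
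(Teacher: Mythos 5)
Your proof is correct and follows the paper's argument exactly: cite Corollary \ref{ioml-100} to place the implicative-orthomodular lattice in $\mathbf{QW}$, then use the recorded fact from \cite{Ciu78} that every quantum-Wajsberg algebra satisfies $(QW_3)$. The alternative routes you sketch (via $\mathbf{QW}=\mathbf{metaW}\bigcap\mathbf{IOM}$ or $\mathbf{preW}\subset\mathbf{metaW}$) are equally valid but unnecessary.
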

\begin{proof}
Let $(X,\ra,^*,1)$ be an implicative-orthomodular lattice, that is an implicative involutive BE algebra satisfying 
axiom $(QW_2)$. By Corollary \ref{ioml-100}, $X$ is a quantum-Wajsberg algebra, and it was proved in 
\cite{Ciu78} that a quantum-Wajsberg algebra satisfies axiom $(QW_3)$. 
It follows that $X$ is a meta-Wajsberg algebra.  
\end{proof}

\begin{remark} \label{ioml-120} 
If we denote by $\mathbf{IOML}$ the class of implicative-orthomodular lattices, then by Definition \ref{ioml-20},  Theorem \ref{ioml-90} and Proposition \ref{ioml-110}, we have: \\
$\hspace*{2cm}$ $\mathbf{IOML}=\mathbf{IOM}\bigcap \mathbf{IOL}=\mathbf{preW}\bigcap \mathbf{IOL}
=\mathbf{QW}\bigcap \mathbf{IOL}$, \\    
$\hspace*{2cm}$ $\mathbf{IOML}\subset \mathbf{metaW}$. 
\end{remark}

\noindent
The following example follows from \cite[Ex. 13.3.2]{Ior35}. 

\begin{example} \label{ioml-130}
Let $X=\{0,a,b,c,d,e,f,g,h,1\}$ and let $(X,\ra,^*,1)$ be the involutive BE algebra with $\ra$ and the corresponding 
operation $\Cap$ given in the following tables: \\
\[
\begin{array}{c|cccccccccc}
\ra & 0 & a & b & c & d & e & f & g & h & 1 \\ \hline
0   & 1 & 1 & 1 & 1 & 1 & 1 & 1 & 1 & 1 & 1 \\ 
a   & b & 1 & b & 1 & 1 & h & f & f & h & 1 \\ 
b   & a & a & 1 & 1 & 1 & a & 1 & a & 1 & 1 \\ 
c   & d & 1 & 1 & 1 & d & 1 & 1 & 1 & 1 & 1 \\
d   & c & 1 & 1 & c & 1 & 1 & 1 & 1 & 1 & 1 \\
e   & f & 1 & f & 1 & 1 & 1 & f & f & 1 & 1 \\
f   & e & a & h & 1 & 1 & e & 1 & a & h & 1 \\
g   & h & 1 & h & 1 & 1 & h & 1 & 1 & h & 1 \\
h   & g & a & f & 1 & 1 & a & f & g & 1 & 1 \\
1   & 0 & a & b & c & d & e & f & g & h & 1
\end{array}
\hspace{10mm}
\begin{array}{c|cccccccccc}
\Cap & 0 & a & b & c & d & e & f & g & h & 1 \\ \hline
0    & 0 & 0 & 0 & 0 & 0 & 0 & 0 & 0 & 0 & 0 \\ 
a    & 0 & a & 0 & c & d & e & g & g & e & a \\ 
b    & 0 & 0 & b & c & d & 0 & b & 0 & b & b \\ 
c    & 0 & a & b & c & 0 & e & f & g & h & c \\
d    & 0 & a & b & 0 & d & e & f & g & h & d \\
e    & 0 & e & 0 & c & d & e & 0 & 0 & e & e \\
f    & 0 & g & b & c & d & 0 & f & g & b & f \\
g    & 0 & g & 0 & c & d & 0 & g & g & 0 & g \\
h    & 0 & e & b & c & d & e & b & 0 & h & h \\
1    & 0 & a & b & c & d & e & f & g & h & 1
\end{array}
.
\]

We can see that $(X,\ra,^*,1)$ is an implicative involutive BE algebra satisfying axiom $(QW_2)$, hence it is  
an implicative-orthomodular lattice. 
One can check that $X$ verifies axioms $(QW_1)$ and $(QW_3)$, that is $X$ is also a quantum-Wajsberg algebra, 
a pre-Wajsberg algebra and a meta-Wajsberg algebra. 
\end{example}

\noindent
Recall the following notions from \cite{PadRud} (see also \cite{Ior32}). \\
A lattice $(X,\wedge,\vee)$ is \emph{modular}, if for all $x,y,z\in X$, \\
$(Wmod)$ $x\wedge (y\vee (x\wedge z))=(x\wedge y)\vee (x\wedge z)$, \\
and, dually, the dual lattice $(X,\vee,\wedge)$ is \emph{modular}, if for all $x,y,z\in X$, \\
$(Vmod)$ $x\vee (y\wedge (x\vee z))=(x\vee y)\wedge (x\vee z)$. \\
A \emph{modular ortholattice} is an ortholattice $(X,\wedge,\vee,^{'},0,1)$ whose lattice $(X,\wedge,\vee)$ is 
modular. It was proved in \cite{PadRud} that a modular ortholattice is an orthomodular lattice. \\

A. Iorgulescu presented in \cite{Ior32} an equivalent definition of modular ortholattices, and she introduced 
the notion of modular algebras. \\
A \emph{modular ortholattice} (\cite[Def. 3.19]{Ior32}) is an involutive m-BE algebra $(X,\odot,^*,1)$ verifying: for all $x,y,z\in X$, \\
$(m$-$Pimpl)$ $((x\odot y^*)^*\odot x^*)^*=x$, \\
$(Pmod)$ $x\odot (y\oplus (x\odot z))=(x\odot y)\oplus (x\odot z)$. \\
A \emph{modular algebra} (MOD algebra for short) (\cite[Def. 3.20]{Ior32}) is an involutive m-BE algebra $(X,\odot,^*,1)$ verifying axiom $(Pmod)$. 

Now, we define the notion of implicative-modular algebras. 

\begin{definition} \label{ioml-150} 
\emph{
An \emph{implicative-modular algebra} (IMOD algebra for short) is an involutive BE algebra $(X,\ra,^*,1)$ 
verifying: for all $x,y,z\in X$, \\
$(Imod)$ $(x\ra (y\ra (x\ra z)^*)^*)^*=(x\ra y)\ra (x\ra z)^*$.  
}
\end{definition}

\begin{remark} \label{ioml-150-10}
Axiom $(Imod)$ is equivalent to the following axiom: for all $x,y,z\in X$, \\
$(Imod^{'})$ $((z\ra x)\ra y)\ra x=((y\ra x)\ra (z\ra x)^*)^*$. \\
Indeed, applying Lemma \ref{qbe-10}$(6)$, we have the following equivalences: \\
$\hspace*{2cm}$ $x\ra (y\ra (x\ra z)^*)^*=((x\ra y)\ra (x\ra z)^*)^*$ (by $(Imod)$) iff \\
$\hspace*{2cm}$ $(y\ra (x\ra z)^*)\ra x^*=((y^*\ra x^*)\ra (z^*\ra x^*)^*)^*$ iff \\
$\hspace*{2cm}$ $((z^*\ra x^*)\ra y^*)\ra x^*=((y^*\ra x^*)\ra (z^*\ra x^*)^*)^*$ iff \\
$\hspace*{2cm}$ $((z\ra x)\ra y)\ra x=((y\ra x)\ra (z\ra x)^*)^*$ \\
$\hspace*{6cm}$ (replacing $x$,$y$,$z$ by $x^*$,$y^*$,$z^*$, respectively).
\end{remark}

\begin{remark} \label{ioml-150-20}
Using the mutually inverse transformations $\varphi$ and $\psi$ defined in the proof of Proposition \ref{ioml-70}, 
we can easily prove that axioms $(Imod)$ and $(Wmod)$ are equivalent. Hence the implicative-modular algebras  
are defitionally equivalent to modular lattices. 
Similary, using the transformations $\Phi$ and $\Psi$ from Proposition \ref{iol-100}, it follows that axioms $(Imod)$ 
and $(Pmod)$ are also equivalent. Thus the implicative-modular algebras are defitionally equivalent to 
modular algebras in the second definition. 
\end{remark}

\begin{proposition} \label{ioml-160} Let $(X,\ra,^*,1)$ be an implicative-modular algebra. The following hold 
for all $x,y,z\in X$: \\
$(1)$ $x\ra (y\ra (x\ra z)^*)^*=x\ra (z\ra (x\ra y)^*)^*;$ \\
$(2)$ $x\ra (y\ra (x\ra (x\ra y)^*)^*)^*=x^*;$ \\
$(3)$ $(x\ra y^*)\ra (x\ra y)^*=(x\ra (x^*\Cap y^*))^*;$ \\
$(4)$ $X$ is an implicative involutive BE algebra.    
\end{proposition}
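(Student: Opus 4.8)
The plan is to obtain all four identities directly from the defining axiom $(Imod)$ together with the commutation rule of Lemma \ref{qbe-10}$(3)$, namely $p\ra q^*=q\ra p^*$. No appeal to implicativity is needed along the way; in fact part $(4)$ will \emph{produce} implicativity as a by-product, which is the conceptually interesting point (the modular law, read inside an involutive BE algebra, forces the ortholattice structure).

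For $(1)$ I would apply $(Imod)$ and take the $^*$ of both sides (using involution) to rewrite $x\ra (y\ra (x\ra z)^*)^*=((x\ra y)\ra (x\ra z)^*)^*$, and likewise, after interchanging $y$ and $z$, $x\ra (z\ra (x\ra y)^*)^*=((x\ra z)\ra (x\ra y)^*)^*$. It then remains to see that the two right-hand sides coincide, which is exactly Lemma \ref{qbe-10}$(3)$ with $p:=x\ra y$ and $q:=x\ra z$, giving $(x\ra y)\ra (x\ra z)^*=(x\ra z)\ra (x\ra y)^*$. Thus $(1)$ reduces to a single symmetry observation about the right-hand side of $(Imod)$.

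Part $(2)$ then follows by specialising $(1)$ at $z:=(x\ra y)^*$: the left-hand side becomes the asserted expression, while the right-hand side collapses to $x\ra \big((x\ra y)^*\ra (x\ra y)^*\big)^*=x\ra 1^*=x\ra 0=x^*$ by $(BE_1)$. For part $(3)$ I would instead invoke $(Imod)$ with the substitution $y\mapsto y^*$, $z\mapsto y$, which gives $(x\ra y^*)\ra (x\ra y)^*=\big(x\ra (y^*\ra (x\ra y)^*)^*\big)^*$. Simplifying the inner factor by Lemma \ref{qbe-10}$(3)$ yields $y^*\ra (x\ra y)^*=(x\ra y)\ra y$, and since the definition of $\Cap$ gives $x^*\Cap y^*=((x\ra y)\ra y)^*$, the right-hand side is precisely $(x\ra (x^*\Cap y^*))^*$, as required.

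The last part carries the real content but is short: setting $y:=0$ in $(Imod)$ makes the inner factor $0\ra (x\ra z)^*=1$, so the left-hand side reduces to $(x\ra 0)^*=x^{**}=x$, while the right-hand side becomes $x^*\ra (x\ra z)^*=(x\ra z)\ra x$ by Lemma \ref{qbe-10}$(3)$. Equating the two gives $(x\ra z)\ra x=x$, i.e. axiom $(Impl)$, so $X$ is an implicative involutive BE algebra. The only real hazard throughout is the bookkeeping of the deeply nested implications and the correct placement of the involution signs; once the three specialisations are spotted ($z:=(x\ra y)^*$ for $(2)$, $y\mapsto y^*,\,z\mapsto y$ for $(3)$, and $y:=0$ for $(4)$), each step is a mechanical application of $(Imod)$ and Lemma \ref{qbe-10}$(3)$.
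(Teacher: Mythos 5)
Your proposal is correct and follows essentially the same route as the paper: $(1)$ is two applications of $(Imod)$ linked by the commutation identity of Lemma \ref{qbe-10}, $(2)$ is the specialisation $z:=(x\ra y)^*$, $(3)$ is the substitution $y\mapsto y^*$, $z\mapsto y$ followed by rewriting $(x\ra y)\ra y$ as $x\Cup y=(x^*\Cap y^*)^*$, and $(4)$ is the specialisation $y:=0$ yielding $(Impl)$. The paper's proof makes exactly these moves, so there is nothing to add.
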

\begin{proof}
$(1)$ Let $(X,\ra,^*,1)$ be an implicative-modular algebra, that is it is an involutive BE algebra 
satisfying axiom $(Imod)$. 
Applying twice axiom $(Imod)$, we get: \\
$\hspace*{2.00cm}$ $x\ra (y\ra (x\ra z)^*)^*=((x\ra y)\ra (x\ra z)^*)^*=((x\ra z)\ra (x\ra y)^*)^*$ \\
$\hspace*{5.65cm}$ $=x\ra (z\ra (x\ra y)^*)^*$. \\
$(2)$ Replacing $z$ by $(x\ra y)^*$ in $(1)$, we have: \\
$x\ra (y\ra (x\ra (x\ra y)^*)^*)^*=x\ra ((x\ra y)^*\ra (x\ra y)^*)^*=x\ra 1^*=x\ra 0=x^*$. \\
$(3)$ Replacing $y$ by $y^*$ and $z$ by $y$ in $(Imod)$, it follows that: \\
$\hspace*{2.00cm}$ $(x\ra y^*)\ra (x\ra y)^*=(x\ra (y^*\ra (x\ra y)^*)^*)^*=(x\ra ((x\ra y)\ra y)^*)^*$ \\
$\hspace*{5.75cm}$ $=(x\ra (x\Cup y)^*)^*=(x\ra (x^*\Cap y^*))^*$. \\
$(4)$ Taking $y:=0$ and $z:=y$ in axiom $(Imod)$, we get: \\
$\hspace*{2.00cm}$ $x^*\ra (x\ra y)^*=(x\ra (0\ra (x\ra y)^*)^*)^*=(x\ra 1^*)^*=x$. \\
It follows that $(x\ra y)\ra x=x$, that is $(Impl)$, so that $X$ is an implicative involutive BE algebra. 
\end{proof}

\begin{theorem} \label{ioml-170} Any implicative-modular algebra is an implicative-orthomodular lattice. 
\end{theorem}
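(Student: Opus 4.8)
The plan is to reduce the claim to Theorem \ref{ioml-60}. By Proposition \ref{ioml-160}$(4)$ an implicative-modular algebra is already an implicative involutive BE algebra, so by Theorem \ref{ioml-60} it suffices to verify one of the equivalent conditions $(IOM)$, $(IOM^{'})$, $(IOM^{''})$. I would aim for $(IOM^{''})$, that is $x\Cup (x\ra y)^*=x$, since its left-hand side unfolds as $(x\ra (x\ra y)^*)\ra (x\ra y)^*$, which is close in shape to the terms produced by axiom $(Imod)$.

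First I would exploit Proposition \ref{ioml-160}$(2)$, namely $x\ra (y\ra (x\ra (x\ra y)^*)^*)^*=x^*$, and feed its left-hand side through axiom $(Imod)$. Writing $(Imod)$ in the form $x\ra (y\ra (x\ra z)^*)^*=((x\ra y)\ra (x\ra z)^*)^*$ and substituting $z:=(x\ra y)^*$ turns the left-hand side of Proposition \ref{ioml-160}$(2)$ exactly into $((x\ra y)\ra (x\ra (x\ra y)^*)^*)^*$. Combining the two identities gives $((x\ra y)\ra (x\ra (x\ra y)^*)^*)^*=x^*$, hence, taking complements, $(x\ra y)\ra (x\ra (x\ra y)^*)^*=x$.

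Next I would rewrite the left-hand side of this identity. Setting $u:=x\ra y$, it reads $u\ra (x\ra u^*)^*$, and Lemma \ref{qbe-10}$(3)$ (that $a\ra b^*=b\ra a^*$) turns it into $(x\ra u^*)\ra u^*=x\Cup u^*=x\Cup (x\ra y)^*$. Therefore $x\Cup (x\ra y)^*=x$, which is precisely $(IOM^{''})$. Since $X$ is an implicative involutive BE algebra satisfying $(IOM^{''})$, Theorem \ref{ioml-60} yields that $X$ is an implicative-orthomodular lattice.

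The main obstacle, and the only genuinely non-routine step, is spotting the substitution $z:=(x\ra y)^*$ that makes the output of $(Imod)$ coincide with the term already computed in Proposition \ref{ioml-160}$(2)$; once this match is seen, everything else is a mechanical application of the involution and Lemma \ref{qbe-10}$(3)$, together with recognizing the resulting expression as a $\Cup$.
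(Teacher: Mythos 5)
Your proof is correct and follows essentially the same route as the paper: establish implicativity via Proposition \ref{ioml-160}$(4)$, verify $(IOM^{''})$ by a substitution into $(Imod)$ that collapses an inner term to $1$, and conclude by Theorem \ref{ioml-60}. The paper gets $(IOM^{''})$ slightly more directly by substituting $y:=(x\ra y)^*$ and $z:=y$ into $(Imod)$, whose right-hand side is then literally $x\Cup(x\ra y)^*$ while the left-hand side collapses to $(x\ra 1^*)^*=x$, avoiding your detour through Proposition \ref{ioml-160}$(2)$ and the final rewrite via Lemma \ref{qbe-10}$(3)$.
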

\begin{proof}
Let $(X,\ra,^*,1)$ be an implicative-modular algebra, that is it is an involutive BE algebra 
satisfying axiom $(Imod)$. According to Proposition \ref{ioml-160}$(4)$, $X$ is implicative. 
Replacing $y$ by $(x\ra y)^*$ and $z$ by $y$ in condition $(Imod)$, we have: \\ 
$\hspace*{2.00cm}$ $x\Cup (x\ra y)^*=(x\ra (x\ra y)^*)\ra (x\ra y)^*$ \\
$\hspace*{4.25cm}$ $=(x\ra ((x\ra y)^*\ra (x\ra y)^*)^*)^*$ \\ 
$\hspace*{4.25cm}$ $=(x\ra 1^*)^*=x$. \\
Hence $X$ satisfies condition $(IOM^{''})$, and according to Theorem \ref{ioml-60}, it is an implicative-orthomodular lattice. 
\end{proof}

\begin{corollary} \label{ioml-180} 
The implicative-modular algebras are implicative-orthomodular algebras, quantum-Wajsberg algebras and 
pre-Wajsberg algebras.  
\end{corollary}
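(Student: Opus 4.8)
The plan is to obtain this corollary purely by transitivity, chaining the two results that immediately precede it. The substantive work has already been carried out in Theorem \ref{ioml-170}, so no fresh computation is needed; the corollary is a bookkeeping consequence of the class inclusions already established.

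First I would invoke Theorem \ref{ioml-170}, which asserts that any implicative-modular algebra is an implicative-orthomodular lattice. This places every implicative-modular algebra inside the class $\mathbf{IOML}$.

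Next I would appeal to Corollary \ref{ioml-100}, which states that the implicative-orthomodular lattices are simultaneously quantum-Wajsberg algebras, implicative-orthomodular algebras, and pre-Wajsberg algebras. Composing the two inclusions yields that every implicative-modular algebra inherits all three of these properties, which is exactly the assertion of the corollary. In the notation of Remark \ref{ioml-120}, this simply reflects the chain $\mathbf{IMOD}\subseteq\mathbf{IOML}=\mathbf{QW}\bigcap\mathbf{IOL}\subseteq\mathbf{QW}=\mathbf{preW}\bigcap\mathbf{IOM}$, from which membership in $\mathbf{QW}$, $\mathbf{preW}$, and $\mathbf{IOM}$ follows at once.

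There is no genuine obstacle to overcome here: the only point requiring care is that all the difficulty is concentrated in Theorem \ref{ioml-170}, where the defining axiom $(Imod)$ was shown (via Proposition \ref{ioml-160}$(4)$ for $(Impl)$ and the specialization $y:=(x\ra y)^*$, $z:=y$ for $(IOM^{''})$) to force an implicative-orthomodular lattice structure. Once that theorem is in hand, the corollary is immediate and warrants at most a one-line proof.
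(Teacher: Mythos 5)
Your proof is correct and coincides with the paper's own argument, which is exactly the one-line composition of Theorem \ref{ioml-170} with Corollary \ref{ioml-100}. Nothing further is needed.
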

\begin{proof} It follows by Theorem \ref{ioml-170} and Corollary \ref{ioml-100}. 
\end{proof}

\begin{remark} \label{ioml-200} The converse of Theorem \ref{ioml-170} is not always true. 
Indeed, consider the implicative-orthomodular lattice $X$ from Example \ref{ioml-130}. 
Axiom $(Imod)$ is not satisfied for $x=a, y=c, z=e$, so that $X$ is not an implicative-modular algebra. 
\end{remark}

\noindent
The next example follows from \cite[Ex. 13.3.1]{Ior35}. 

\begin{example} \label{ioml-210} %\emph{(see \cite[Ex. 13.3.1]{Ior35})}
Let $X=\{0,a,b,c,d,1\}$ and let $(X,\ra,^*,1)$ be the involutive BE algebra 
with $\ra$ and the corresponding operation $\Cap$ given in the following tables:  
\[
\begin{array}{c|ccccccc}
\ra & 0 & a & b & c & d & 1 \\ \hline
0   & 1 & 1 & 1 & 1 & 1 & 1 \\ 
a   & b & 1 & b & 1 & 1 & 1 \\ 
b   & a & a & 1 & 1 & 1 & 1 \\ 
c   & d & 1 & 1 & 1 & d & 1 \\
d   & c & 1 & 1 & c & 1 & 1 \\
1   & 0 & a & b & c & d & 1
\end{array}
\hspace{10mm}
\begin{array}{c|ccccccc}
\Cap & 0 & a & b & c & d & 1 \\ \hline
0    & 0 & 0 & 0 & 0 & 0 & 0 \\ 
a    & 0 & a & 0 & c & d & a \\ 
b    & 0 & 0 & b & c & d & b \\ 
c    & 0 & a & b & c & 0 & c \\
d    & 0 & a & b & 0 & d & d \\
1    & 0 & a & b & c & d & 1
\end{array}
.
\]

We can see that $(X,\ra,^*,1)$ satisfies axiom $(Imod)$, hence it is an implicative-modular algebra. 
\end{example}

$\vspace*{1mm}$

\section{Implicative-orthomodular softlattices and widelattices}

Based on involutive BE algebras, we redefine the orthomodular softlattices and orthomodular widelattices, 
by introducing the notions of implicative-orthomodular softlattices and implicative-orthomodular widelattices. 
We prove that the implicative-orthomodular softlattices are equivalent to implicative-orthomodular lattices, 
and that the implicative-orthomodular widelattices are special cases of quantum-Wajsberg algebras. 
We also show that the implicative-orthomodular softlattices are defitionally equivalent to orthomodular 
softlattices, and the implicative-orthomodular widelattices are defitionally equivalent to orthomodular 
widelattices. \\
\noindent
Let $(X,\ra,^*,1)$ be an involutive BE algebra  verifying condition: \\ 
$(Pom)$ $(x\odot y)\oplus ((x\odot y)^*\odot x)=x$ or, equivalently, $x\Cup (x\odot y)=x$. \\
Then $X$ is called (\cite[Def. 4.2, 4.19]{Ior32}): \\ 
- an \emph{orthomodular softlattices} (OMSL for short) if it satisfies condition: \\ 
$(G)$ $x\odot x=x;$ \\  
- an \emph{orthomodular widelattices} (OMWL for short) if it satisfies condition: \\ 
$(m$-$Pabs$-$i)$ $x\odot (x\oplus x\oplus y)=x$.   

\begin{definition} \label{gioml-20} 
\emph{
Let Let $(X,\ra,^*,1)$ be an involutive BE algebra  verifying condition $(QW_2)$. Then $X$ is called: \\
- an \emph{implicative-orthomodular softlattices} (IOMSL for short) if it satisfies condition $(iG)$; \\
- an \emph{implicative-orthomodular widelattices} (IOMWL for short) if it satisfies condition $(Iabs$-$i)$.  
}
\end{definition} 

\begin{remark} \label{gioml-30}
Since by Theorem \ref{ioml-60}, condition $(QW_2)$ is equivalent to condition $(IOM^{''})$, 
then by the mutually inverse transformations (\cite{Ior30, Ior35}) \\ 
$\hspace*{3cm}$ $\Phi:$\hspace*{0.2cm}$ x\odot y:=(x\ra y^*)^*$ $\hspace*{0.1cm}$ and  
                $\hspace*{0.1cm}$ $\Psi:$\hspace*{0.2cm}$ x\ra y:=(x\odot y^*)^*$, \\
the following hold: \\
$(1)$ the implicative-orthomodular softlattices are defitionally equivalent to orthomodular softlattices; \\
$(2)$ the implicative-orthomodular widelattices are defitionally equivalent to orthomodular widelattices.              
\end{remark}

\begin{proposition} \label{gioml-90} 
An involutive BE algebra satisfying conditions $(QW_2)$ and $(iG)$ is implicative. 
\end{proposition}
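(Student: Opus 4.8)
The plan is to show directly that $X$ satisfies $(Impl)$, i.e. $(x\ra y)\ra x=x$ for all $x,y$; by Definition \ref{iol-20} this is exactly what ``implicative'' means. The first observation is that $(QW_2)$ by itself forces the orthomodularity conditions: substituting $y:=0$ in $(QW_2)$ and using $0\Cap(z\Cap x)=0$ gives $x^*=x^*\Cap(x\ra z)$, which is $(IOM^{'})$ (and hence $(IOM)$, $(IOM^{''})$ by Lemma \ref{ioml-30}). Crucially this step does not invoke implicativity, so Propositions \ref{ioml-40} and \ref{ioml-50} — proved for involutive BE algebras satisfying the equivalent conditions $(IOM)$, $(IOM^{'})$, $(IOM^{''})$ — are at our disposal. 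This mirrors the ``twin'' statement Proposition \ref{gioml-90-10} for $(QW_1)$; but the argument there collapses an iterated implication via $(Pimpl)$, and that collapse does not transfer verbatim to $(QW_2)$. I will instead sandwich $(x\ra y)\ra x$ between $x$ and $x$ in the order $\le_Q$.

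Concretely, reading the consequence of $(QW_2)$ above as $x^*\le_Q x\ra y$ (for all $x,y$), I would feed this into the monotonicity property Proposition \ref{ioml-40}$(3)$: from $x^*\le_Q x\ra y$ it yields $(x\ra y)\ra x\le_Q x^*\ra x$. Here $(iG)$ enters decisively, since $x^*\ra x=x$, so that in fact $(x\ra y)\ra x\le_Q x$. For the reverse relation, $(IOM)$ applied with its free variable set equal to $x\ra y$ gives $x\Cap\big((x\ra y)\ra x\big)=x$, that is $x\le_Q(x\ra y)\ra x$. Antisymmetry of $\le_Q$ (Proposition \ref{qbe-20}$(2)$) then forces $(x\ra y)\ra x=x$, which is $(Impl)$, so $X$ is implicative.

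The step I expect to be the real obstacle — and the place where the two hypotheses genuinely interact — is producing the reverse inequality $(x\ra y)\ra x\le_Q x$. The condition $(QW_2)$ on its own only delivers the ``inward'' relation $x^*\le_Q x\ra y$, and without more this merely reproduces facts already known (for instance $x\le_Q(x\ra y)\ra x$), leading in circles; the non-commutativity of $\Cap$ and $\Cup$ means one cannot simply transpose the $(QW_1)$-proof of Proposition \ref{gioml-90-10}. What breaks the symmetry is $(iG)$ in the sharp form $x^*\ra x=x$: after transporting $x^*\le_Q x\ra y$ through Proposition \ref{ioml-40}$(3)$ one lands on the expression $x^*\ra x$, and it is precisely $(iG)$ that lets this simplify back to $x$. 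An alternative route would be to first establish $(QW_1)$ from $(QW_2)$ and $(iG)$ and then quote Proposition \ref{gioml-90-10}, but the $\le_Q$-sandwich above appears both shorter and more transparent.
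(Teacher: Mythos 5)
Your proof is correct, and it takes a genuinely different route from the one in the paper. You both start the same way: $(QW_2)$ with $y:=0$ yields $x^*=x^*\Cap(x\ra z)$, hence the equivalent conditions $(IOM)$, $(IOM^{'})$, $(IOM^{''})$, without any appeal to $(Impl)$ — so Proposition \ref{ioml-40} is indeed legitimately available and there is no circularity. From there the paper argues purely equationally: it writes $(x\ra y)\ra x=(x\ra y)\ra\bigl(x\Cup(x\ra y)^*\bigr)$ by $(IOM^{''})$, expands the $\Cup$, commutes the two antecedents by $(BE_4)$, collapses $(x\ra y)\ra(x\ra y)^*$ to $(x\ra y)^*$ by $(iG)$, and recognizes $x\Cup(x\ra y)^*=x$ again — a five-line self-contained computation using only $(IOM^{''})$, $(BE_4)$ and $(iG)$. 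You instead sandwich $(x\ra y)\ra x$ in the order $\le_Q$: the relation $x^*\le_Q x\ra y$ fed through the monotonicity property of Proposition \ref{ioml-40}$(3)$ gives $(x\ra y)\ra x\le_Q x^*\ra x=x$ by $(iG)$, while $(IOM)$ gives $x\le_Q(x\ra y)\ra x$, and antisymmetry of $\le_Q$ finishes. Your argument is conceptually more transparent — it isolates exactly where $(iG)$ breaks the symmetry — but it leans on the heavier intermediate result \ref{ioml-40}$(3)$, whose own proof is a nontrivial computation; the paper's version is longer on the page but needs nothing beyond $(IOM^{''})$ and the BE axioms. Your closing remark that one could alternatively derive $(QW_1)$ first and invoke Proposition \ref{gioml-90-10} is also sound (that is essentially the content of Proposition \ref{gioml-40}), though as you note it is less direct.
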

\begin{proof}
Let $X$ be an involutive BE algebra satisfying conditions $(QW_2)$ and $(iG)$, and let $x,y\in X$. 
Since $X$ satisfies $(QW_2)$, then it satisfies $(IOM)$, $(IOM^{'})$ , $(IOM^{''})$. Then we have: \\
$\hspace*{2.00cm}$ $(x\ra y)\ra x=(x\ra y)\ra (x\Cup (x\ra y)^*)$ (by $(IOM^{''})$) \\ 
$\hspace*{4.35cm}$ $=(x\ra y)\ra ((x\ra (x\ra y)^*)\ra (x\ra y)^*)$ \\
$\hspace*{4.35cm}$ $=(x\ra (x\ra y)^*)\ra ((x\ra y)\ra (x\ra y)^*)$ \\
$\hspace*{4.35cm}$ $=(x\ra (x\ra y)^*)\ra (x\ra y)^*$ (by $(iG)$) \\
$\hspace*{4.35cm}$ $=x\Cup (x\ra y)^*=x$ (by $(IOM^{''})$). \\
Hence $(x\ra y)\ra x=x$, so that $X$ satisfies axiom $(Impl)$. 
Thus $X$ is an implicative involutive BE algebra.
\end{proof}

\begin{proposition} \label{gioml-40} Let $(X,\ra,^*,1)$ be an involutive BE algebra. 
If $X$ satisfies axioms $(QW_2)$ and $(iG)$, then $X$ satisfies axiom $(QW_1)$. 
\end{proposition}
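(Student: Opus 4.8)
The plan is to reduce the statement to two results already available in the excerpt, so that almost no fresh computation is required. First I would observe that the hypotheses here are exactly those of Proposition \ref{gioml-90}: $X$ is an involutive BE algebra satisfying $(QW_2)$ and $(iG)$. That proposition then guarantees that $X$ satisfies $(Impl)$, i.e. $X$ is an \emph{implicative} involutive BE algebra. Securing implicativeness is the crucial first move, because it upgrades $X$ from a generic involutive BE algebra to the setting in which the various quantum axioms collapse onto one another.

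Once implicativeness is in hand, the conclusion is immediate. Theorem \ref{ioml-90} asserts that for an implicative involutive BE algebra the three axioms $(QW_1)$, $(QW_2)$ and $(QW)$ are pairwise equivalent. Since $X$ is assumed to satisfy $(QW_2)$, the implication $(b)\Rightarrow(a)$ of that theorem yields $(QW_1)$ directly. Thus the whole argument is: apply Proposition \ref{gioml-90} to get $(Impl)$, then apply Theorem \ref{ioml-90} to pass from $(QW_2)$ to $(QW_1)$.

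I do not expect a genuine obstacle at the level of this statement, because the substantive work has been front-loaded into the two cited results. The only point deserving care is to confirm that the hypotheses of Proposition \ref{gioml-90} are met verbatim (they are), and to note that its internal proof already uses the equivalence of $(QW_2)$ with $(IOM^{''})$ from Theorem \ref{ioml-60} together with $(iG)$ to manufacture $(Impl)$; so the chain of dependencies is consistent and non-circular.

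If instead a self-contained derivation were wanted, one could try to bypass the implicativeness detour and imitate the $(a)\Rightarrow(b)$ computation inside Theorem \ref{ioml-90} run the other way, deriving $x\ra(x\Cap y)=x\ra y$ from $(QW_2)$-equivalent conditions $(IOM)$, $(IOM^{'})$, $(IOM^{''})$ together with $(iG)$; this is essentially the $(b)\Rightarrow(c)$ calculation of Theorem \ref{ioml-90}, which already shows $(QW_2)$ forces $(QW_1)$ under $(IOM)$. That route is longer and duplicates existing work, so the economical proof via Proposition \ref{gioml-90} and Theorem \ref{ioml-90} is the one I would record.
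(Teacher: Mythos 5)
Your argument is correct and the dependency chain you trace is indeed non-circular: Proposition \ref{gioml-90} is proved from $(QW_2)$, $(iG)$ and Theorem \ref{ioml-60} alone, and Theorem \ref{ioml-90} is established in the previous section without any reference to Proposition \ref{gioml-40}. Your first step coincides exactly with the paper's: its proof also opens by invoking Proposition \ref{gioml-90} to secure $(Impl)$. Where you diverge is in the second step. Rather than citing the equivalence of $(QW_1)$ and $(QW_2)$ from Theorem \ref{ioml-90}, the paper carries out a fresh equational computation: starting from $x\ra y=x\ra (y\Cup (y\ra x)^*)$ via $(IOM^{''})$, it rewrites with $(BE_4)$ and Lemma \ref{qbe-10}$(6)$, and finishes with an application of $(Impl)$ to land on $x\ra (x\Cap y)$. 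This calculation is not the same as the one in the $(b)\Rightarrow (c)$ part of Theorem \ref{ioml-90} (which goes through Lemma \ref{iol-30-05}$(3)$ and $(IOM)$), so the paper is in effect giving a second, independent derivation of $(QW_1)$ from $(QW_2)$ under $(Impl)$. Your route buys brevity and avoids duplicating work already recorded; the paper's route keeps the section self-contained with an explicit derivation. Both are valid proofs of the proposition.
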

\begin{proof}
Let $X$ be an involutive BE algebra satisfying axioms $(QW_2)$ and $(iG)$, and let $x,y\in X$. 
By Proposition \ref{gioml-90}, $X$ is implicative.
Since $X$ satisfies $(QW_2)$, then it satisfies $(IOM)$, $(IOM^{'})$, $(IOM^{''})$. It follows that: \\
$\hspace*{2.00cm}$ $x\ra y=x\ra (y\Cup (y\ra x)^*)$ (by $(IOM^{''})$) \\ 
$\hspace*{3.05cm}$ $=x\ra ((y\ra (y\ra x)^*)\ra (y\ra x)^*)$ \\
$\hspace*{3.05cm}$ $=(y\ra (y\ra x)^*)\ra (x\ra (y\ra x)^*)$  (by $(BE_4)$) \\
$\hspace*{3.05cm}$ $=(x\ra (y\ra x)^*)^*\ra (y\ra (y\ra x)^*)^*$ (Lemma \ref{qbe-10}$(6)$) \\
$\hspace*{3.05cm}$ $=((y\ra x)\ra x^*)^*\ra ((y\ra x)\ra y^*)^*$ (Lemma \ref{qbe-10}$(6)$) \\
$\hspace*{3.05cm}$ $=((x^*\ra y^*)\ra x^*)^*\ra ((x^*\ra y^*)\ra y^*)^*$ \\
$\hspace*{3.05cm}$ $=(x^*)^*\ra ((x^*\ra y^*)\ra y^*)^*$ (by $(Impl)$) \\
$\hspace*{3.05cm}$ $=x\ra (x\Cap y)$. \\
Thus $x\ra (x\Cap y)=x\ra y$, that is $X$ satisfies axiom $(QW_1)$. 
\end{proof}

\begin{proposition} \label{gioml-50} Let $(X,\ra,^*,1)$ be an involutive BE algebra. 
If $X$ satisfies axioms $(QW_1)$ and $(iG)$, then $X$ satisfies axiom $(QW_2)$. 
\end{proposition}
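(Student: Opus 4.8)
The plan is to reduce the statement to two results already established in the excerpt, thereby avoiding any fresh manipulation of the $\Cap$ and $\Cup$ operations. The first observation is that the hypotheses $(QW_1)$ and $(iG)$ are exactly those of Proposition \ref{gioml-90-10}, which concludes that $X$ is implicative. Hence $X$ is an implicative involutive BE algebra that, in addition, satisfies $(QW_1)$.

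Once implicativity is secured, I would invoke Theorem \ref{ioml-90}, whose standing assumption is precisely that $X$ is an implicative involutive BE algebra. That theorem asserts the equivalence of $(QW_1)$, $(QW_2)$ and $(QW)$ for such algebras. Since $X$ satisfies $(QW_1)$ (condition $(a)$ of the theorem), the implication $(a)\Rightarrow (b)$ immediately yields that $X$ satisfies $(QW_2)$, which is the desired conclusion.

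The one point I would verify carefully is that the hypotheses of the two cited results compose without circularity. Proposition \ref{gioml-90-10} takes $(QW_1)$ and $(iG)$ as input and produces implicativity, whereas Theorem \ref{ioml-90} treats implicativity as a hypothesis and then regards $(QW_1)$ as one of the equivalent conditions. Because $(QW_1)$ is assumed from the outset, the two steps chain cleanly: we feed $(QW_1)$ and $(iG)$ into the proposition to obtain $(Impl)$, and then feed $(Impl)$ together with $(QW_1)$ into the theorem to obtain $(QW_2)$. I expect no genuine obstacle here; the statement is essentially a corollary of Proposition \ref{gioml-90-10} and Theorem \ref{ioml-90}, and together with Proposition \ref{gioml-40} it records that, in the presence of $(iG)$, the axioms $(QW_1)$ and $(QW_2)$ are interchangeable.
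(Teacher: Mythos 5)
Your proof is correct, and it takes a genuinely shorter route than the paper's. The paper's own argument begins exactly as yours does, by citing Proposition \ref{gioml-90-10} to obtain $(Impl)$ from $(QW_1)$ and $(iG)$; but it then carries out a long explicit chain of about twenty rewriting steps (using $(QW_1)$, $(BE_4)$, Lemma \ref{qbe-10}$(6)$, $(iG)$ and $(Impl)$) to arrive at $(IOM)$, and only then invokes Theorem \ref{ioml-60} to conclude $(QW_2)$. You instead observe that, once implicativity is in hand, Theorem \ref{ioml-90} already contains the implication $(QW_1)\Rightarrow(QW_2)$ for implicative involutive BE algebras, so the explicit computation is redundant. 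Your concern about circularity is the right one to raise, and it checks out: Proposition \ref{gioml-90-10} is proved in Section 3 using only $(iG)$, $(Pimpl)$ and Lemma \ref{qbe-10}, and Theorem \ref{ioml-90} is proved in Section 4 via Lemma \ref{iol-30-05} and Theorem \ref{ioml-60}, none of which depend on Proposition \ref{gioml-50}. What the paper's longer computation buys is a self-contained derivation of $(IOM)$ under the weaker-looking package $(QW_1)+(iG)$ without routing through the full equivalence of Theorem \ref{ioml-90}; what your version buys is economy and a clearer picture of the logical architecture, namely that the only new content of Proposition \ref{gioml-50} beyond Theorem \ref{ioml-90} is the passage from $(iG)$ to $(Impl)$ supplied by Proposition \ref{gioml-90-10}.
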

\begin{proof}
Let $X$ be an involutive BE algebra satisfying axioms $(QW_1)$ and $(iG)$, and let $x,y\in X$. 
By Proposition \ref{gioml-90-10}, $X$ is implicative. 
Then we have succesively: \\
$\hspace*{2.00cm}$ $x\ra (x\Cap y)=x\ra y$  (by $(QW_1)$), \\
$\hspace*{2.00cm}$ $x\ra (x\Cap (y\ra x^*))=x\ra (y\ra x^*)$ (replacing $y$ by $y\ra x^*$), \\
$\hspace*{2.00cm}$ $x\ra (x^*\Cup (y\ra x^*)^*)^*=y\ra (x\ra x^*)$ (by $(BE_4)$), \\
$\hspace*{2.00cm}$ $(x^*\Cup (y\ra x^*)^*)\ra x^*=y\ra x^*$ (Lemma \ref{qbe-10}$(6)$ and $(iG)$), \\
$\hspace*{2.00cm}$ $((x^*\ra (y\ra x^*)^*)\ra (y\ra x^*)^*)\ra x^*=x\ra y^*$, \\
$\hspace*{2.00cm}$ $(((y\ra x^*)\ra x)\ra (y\ra x^*)^*)\ra x^*=x\ra y^*$, \\
$\hspace*{2.00cm}$ $(((x\ra y^*)\ra x)\ra (y\ra x^*)^*)\ra x^*=x\ra y^*$, \\
$\hspace*{2.00cm}$ $(x\ra (y\ra x^*)^*)\ra x^*=x\ra y^*$ (by $(Impl)$), \\
$\hspace*{2.00cm}$ $((y\ra x^*)\ra x^*)\ra x^*=x\ra y^*$, \\
$\hspace*{2.00cm}$ $(y\Cup x^*)\ra x^*=x\ra y^*$, \\
$\hspace*{2.00cm}$ $x\ra (y\Cup x^*)^*=x\ra y^*$, \\
$\hspace*{2.00cm}$ $x\ra (y^*\Cap x)=x\ra y^*$, \\
$\hspace*{2.00cm}$ $(x\ra y^*)\ra (x\Cap (x\ra y^*))=(x\ra y^*)\ra x$ 
                                              (replacing $x$ by $x\ra y^*$ and $y$ by $x^*$), \\
$\hspace*{2.00cm}$ $(x\ra y^*)\ra (x^*\Cup (x\ra y^*)^*)^*=x$ (by $(Impl)$), \\
$\hspace*{2.00cm}$ $(x^*\Cup (x\ra y^*)^*)\ra (x\ra y^*)^*=x$, \\
$\hspace*{2.00cm}$ $((x^*\ra (x\ra y^*)^*)\ra (x\ra y^*)^*)\ra (x\ra y^*)^*=x$, \\
$\hspace*{2.00cm}$ $(((x\ra y^*)\ra x)\ra (x\ra y^*)^*)\ra (x\ra y^*)^*=x$, \\
$\hspace*{2.00cm}$ $(x\ra (x\ra y^*)^*)\ra (x\ra y^*)^*=x$ (by $(Impl)$), \\
$\hspace*{2.00cm}$ $x\Cup (x\ra y^*)^*=x$, \\
$\hspace*{2.00cm}$ $x^*\Cap (x\ra y^*)=x^*$, \\
$\hspace*{2.00cm}$ $x\Cap (x^*\ra y)=x$ (replacing $x$ by $x^*$ and $y$ by $y^*$). \\
Thus $X$ satisfies condition $(IOM)$, so that, by Theorem \ref{ioml-60}, $X$ verifies axiom $(QW_2)$. 
\end{proof} 

\begin{theorem} \label{gioml-60} Let $(X,\ra,^*,1)$ be an involutive BE algebra satisfying condition $(iG)$. 
Then axioms $(QW_1)$ and $(QW_2)$ are equivalent.  
\end{theorem}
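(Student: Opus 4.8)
The plan is to derive this equivalence directly from the two immediately preceding propositions, each of which already supplies one of the two required implications under the standing assumption $(iG)$. Since no computation beyond what those propositions contain is needed, the argument reduces to fixing the two directions and combining them; there is no genuine obstacle to overcome here, only the bookkeeping of matching hypotheses.

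Concretely, I would fix an involutive BE algebra $(X,\ra,^*,1)$ satisfying $(iG)$ and treat the two implications separately. For the direction $(QW_1)\Rightarrow (QW_2)$, I would invoke Proposition \ref{gioml-50}, which asserts precisely that an involutive BE algebra satisfying $(QW_1)$ together with $(iG)$ also satisfies $(QW_2)$. For the converse $(QW_2)\Rightarrow (QW_1)$, I would invoke Proposition \ref{gioml-40}, which asserts that $(QW_2)$ together with $(iG)$ yields $(QW_1)$. Putting the two together, under the hypothesis $(iG)$ each of $(QW_1)$ and $(QW_2)$ implies the other, so the two axioms are equivalent, as claimed.

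The substantive content therefore lives entirely in Propositions \ref{gioml-40} and \ref{gioml-50}, not in this theorem. The one point deserving care is that both of those propositions silently route through implicativity: Proposition \ref{gioml-90} (for the $(QW_2)$ side) and Proposition \ref{gioml-90-10} (for the $(QW_1)$ side) show that $(iG)$ together with either of $(QW_1)$, $(QW_2)$ already forces $(Impl)$, so implicativity is a derived consequence rather than an extra assumption, and the two propositions may indeed be applied under the single shared hypothesis $(iG)$.

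It is worth noting in passing that $(iG)$ cannot be dropped: the analogous equivalence of $(QW_1)$ and $(QW_2)$ for \emph{implicative} involutive BE algebras is the content of Theorem \ref{ioml-90}, where the role played here by $(iG)$ is instead played by the stronger axiom $(Impl)$ (which, by Lemma \ref{iol-30}, implies $(iG)$). Thus this result is the natural softlattice counterpart of Theorem \ref{ioml-90}, obtained by weakening $(Impl)$ to $(iG)$.
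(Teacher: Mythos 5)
Your proposal is correct and coincides exactly with the paper's own proof, which likewise deduces the theorem by combining Proposition \ref{gioml-50} for the direction $(QW_1)\Rightarrow(QW_2)$ and Proposition \ref{gioml-40} for the converse. Your additional remark that implicativity is derived rather than assumed (via Propositions \ref{gioml-90} and \ref{gioml-90-10}) accurately reflects how those two propositions are structured internally.
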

\begin{proof} It follows by Propositions \ref{gioml-40} and \ref{gioml-50}. 
\end{proof}

\begin{corollary} \label{gioml-70}
If $X$ is an involutive BE algebra satisfying conditiom $(iG)$, the following are equivalent: \\
$(a)$ $X$ satisfies axiom $(QW_1);$ \\
$(b)$ $X$ satisfies axiom $(QW_2);$ \\
$(c)$ $X$ satisfies axiom $(QW)$. 
\end{corollary}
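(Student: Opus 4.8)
The plan is to reduce everything to Theorem \ref{gioml-60} together with the equivalence, recalled in the Preliminaries from \cite{Ciu78}, that axiom $(QW)$ holds if and only if both $(QW_1)$ and $(QW_2)$ hold. Under the standing hypotheses that $X$ is an involutive BE algebra satisfying $(iG)$, Theorem \ref{gioml-60} already gives $(a)\LR (b)$, so the only genuinely new content is to splice $(c)$ into this equivalence. I would structure the argument as a short cycle $(a)\Ri (c)\Ri (a)$ layered on top of $(a)\LR (b)$.

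First I would record $(c)\Ri (a)$, which is immediate and requires no appeal to $(iG)$: since $(QW)$ is equivalent to the conjunction of $(QW_1)$ and $(QW_2)$, any algebra verifying $(QW)$ in particular verifies $(QW_1)$. This is the same trivial step that closes Theorem \ref{ioml-90}.

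Next I would prove $(a)\Ri (c)$. Assuming $X$ satisfies $(QW_1)$, the hypotheses that $X$ is involutive and satisfies $(iG)$ let me invoke Theorem \ref{gioml-60} to conclude that $X$ also satisfies $(QW_2)$. Thus $X$ satisfies both $(QW_1)$ and $(QW_2)$, and by the equivalence from \cite{Ciu78} it satisfies $(QW)$. Together with $(a)\LR (b)$ from Theorem \ref{gioml-60}, this yields the full three-way equivalence $(a)\LR (b)\LR (c)$.

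I do not expect a real obstacle: all of the substantive computation is already packaged inside Theorem \ref{gioml-60}, which in turn rests on Propositions \ref{gioml-40} and \ref{gioml-50}. The corollary is therefore a purely formal consequence, and the only point requiring care is to apply the hypothesis $(iG)$ exactly where Theorem \ref{gioml-60} needs it, namely in the step $(a)\Ri (c)$ that upgrades $(QW_1)$ to $(QW_2)$, while noting that $(c)\Ri (a)$ holds unconditionally.
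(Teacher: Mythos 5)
Your proposal is correct and follows essentially the same route as the paper: the paper likewise combines Theorem \ref{gioml-60} (equivalently, Propositions \ref{gioml-40} and \ref{gioml-50}) with the fact from \cite{Ciu78} that $(QW)$ is equivalent to the conjunction of $(QW_1)$ and $(QW_2)$, differing only in that it writes out $(a)\Ri(c)$, $(b)\Ri(c)$, $(c)\Ri(a)$ and $(c)\Ri(b)$ separately rather than as your cycle. No gaps.
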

\begin{proof} 
By Theorem \ref{gioml-60}, $(a)\Leftrightarrow (b)$. 
If $X$ satisfies $(QW_1)$, then by Proposition \ref{gioml-50}, $X$ verifies $(QW_2)$, so that $(a)\Rightarrow (c)$. 
Similarly, if $X$ satisfies $(QW_2)$, then by Proposition \ref{gioml-40}, $X$ verifies $(QW_1)$, hence 
$(b)\Rightarrow (c)$. 
Since by definition, $(QW)$ implies $(QW_1)$ and $(QW_2)$, it follows that $(c)\Rightarrow (a)$ and  
$(c)\Rightarrow (b)$, and the proof is complete. 
\end{proof}

\begin{corollary} \label{gioml-80}
Denoting by $\mathbf{IOMSL}$ the class of implicative-orthomodular softlattices, we have: \\
$\hspace*{4.00cm}$ $\mathbf{IOMSL}=\mathbf{preW}\bigcap \mathbf{IOSL}$ and \\
$\hspace*{4.00cm}$ $\mathbf{IOMSL}=\mathbf{QW}\bigcap \mathbf{IOSL}$.
\end{corollary}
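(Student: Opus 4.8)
The plan is to unfold each of the three classes into its defining axioms and then reduce the two claimed equalities to the equivalence already recorded in Corollary \ref{gioml-70}. By Definition \ref{gioml-20}, an involutive BE algebra lies in $\mathbf{IOMSL}$ exactly when it satisfies both $(QW_2)$ and $(iG)$. Since pre-Wajsberg algebras are defined by $(QW_1)$ and, by Definition \ref{iol-90}, implicative-orthosoftlattices are defined by $(iG)$, membership in $\mathbf{preW}\bigcap \mathbf{IOSL}$ is the conjunction of $(QW_1)$ and $(iG)$. Likewise, as $\mathbf{QW}$ is the class defined by $(QW)$, membership in $\mathbf{QW}\bigcap \mathbf{IOSL}$ is the conjunction of $(QW)$ and $(iG)$.

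First I would fix an involutive BE algebra $X$ satisfying $(iG)$, so that Corollary \ref{gioml-70} applies and guarantees that, for such an $X$, the three axioms $(QW_1)$, $(QW_2)$ and $(QW)$ are mutually equivalent. Under the standing hypothesis $(iG)$ the three pairs of conditions therefore cut out exactly the same algebras: an involutive BE algebra satisfies $(QW_2)$ and $(iG)$ if and only if it satisfies $(QW_1)$ and $(iG)$, and if and only if it satisfies $(QW)$ and $(iG)$. Reading these equivalences as set identities yields at once $\mathbf{IOMSL}=\mathbf{preW}\bigcap \mathbf{IOSL}$ and $\mathbf{IOMSL}=\mathbf{QW}\bigcap \mathbf{IOSL}$.

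I do not expect any genuine obstacle here, since the substance of the argument has already been done: Proposition \ref{gioml-40} supplies the implication from $(QW_2)$ to $(QW_1)$ under $(iG)$, Proposition \ref{gioml-50} supplies the converse, and these are bundled into Theorem \ref{gioml-60} and Corollary \ref{gioml-70}. The one point worth flagging in the write-up is that $(iG)$ is exactly the hypothesis carried by the common factor $\mathbf{IOSL}$ on both right-hand sides, and it is precisely this hypothesis that licenses the use of Corollary \ref{gioml-70}; without it the equivalence of $(QW_1)$ and $(QW_2)$ need not hold (the analogous equivalence in Theorem \ref{ioml-90} instead rested on $(Impl)$). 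Consequently I would present the proof as a short, explicit appeal to Corollary \ref{gioml-70} applied to the defining axioms unfolded in the first paragraph.
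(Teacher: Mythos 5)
Your proposal is correct and matches the paper's intent: the corollary is stated without proof precisely because it follows by unfolding the definitions of $\mathbf{IOMSL}$, $\mathbf{preW}$, $\mathbf{IOSL}$ and $\mathbf{QW}$ and invoking the equivalence of $(QW_1)$, $(QW_2)$ and $(QW)$ under $(iG)$ from Corollary \ref{gioml-70}. Your remark that $(iG)$ is exactly the hypothesis supplied by the common factor $\mathbf{IOSL}$ is the right point to flag.
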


\begin{theorem} \label{gioml-100}
The implicative-orthomodular softlattices are equivalent to implicative-orthomodular lattices. 
\end{theorem}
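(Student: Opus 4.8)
The plan is to prove the equivalence by establishing the two inclusions between the classes, observing first that both $\mathbf{IOML}$ and $\mathbf{IOMSL}$ are subclasses of involutive BE algebras over the very same signature $(X,\ra,^*,1)$, so no nontrivial term-transformation is involved and ``equivalent'' here reduces to the coincidence of the two classes. Recall that by Definition \ref{ioml-20} an IOML is an involutive BE algebra satisfying $(Impl)$ and $(QW_2)$, whereas by Definition \ref{gioml-20} an IOMSL is an involutive BE algebra satisfying $(QW_2)$ and $(iG)$. Thus everything hinges on relating the axioms $(Impl)$ and $(iG)$ in the presence of $(QW_2)$.

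First I would show that every IOML is an IOMSL. Let $X$ be an IOML, so that $X$ is an implicative involutive BE algebra satisfying $(QW_2)$. Since $X$ is implicative, Lemma \ref{iol-30} guarantees that $X$ satisfies $(iG)$. Hence $X$ satisfies both $(QW_2)$ and $(iG)$, which is precisely the definition of an IOMSL.

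For the converse, I would show that every IOMSL is an IOML. Let $X$ be an IOMSL, so $X$ satisfies $(QW_2)$ and $(iG)$. By Proposition \ref{gioml-90}, an involutive BE algebra satisfying $(QW_2)$ and $(iG)$ is implicative, i.e. it verifies $(Impl)$. Therefore $X$ satisfies $(Impl)$ and $(QW_2)$, which is exactly the definition of an IOML. Combining the two inclusions yields the asserted equivalence.

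The work in this theorem is entirely front-loaded into the two auxiliary results, so there is no genuine obstacle remaining at this stage. The nontrivial implication $(QW_2)\wedge(iG)\Rightarrow(Impl)$ was already carried out in Proposition \ref{gioml-90} (exploiting the equivalent forms $(IOM)$, $(IOM^{'})$, $(IOM^{''})$ of $(QW_2)$ supplied by Theorem \ref{ioml-60}), and the easy direction rests on Lemma \ref{iol-30}. The only point requiring attention is to confirm that nothing extra is hidden in the word ``equivalence'': since both classes live on the same type $(X,\ra,^*,1)$ and the identity map serves in both directions, assembling the two inclusions genuinely identifies the two classes, which is what I expect to state as the conclusion.
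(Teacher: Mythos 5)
Your proposal is correct and follows essentially the same route as the paper's own proof: both directions rest on exactly the same two ingredients, namely Lemma \ref{iol-30} for $(Impl)\Rightarrow(iG)$ and Proposition \ref{gioml-90} for $(QW_2)\wedge(iG)\Rightarrow(Impl)$. The only difference is cosmetic (you treat the easy inclusion first and add a remark about the two classes sharing the same signature).
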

\begin{proof}
Let $X$ be an implicative-orthomodular softlattice, that is an involutive BE algebra satisfying conditions $(QW_2)$ 
and $(iG)$. By Proposition \ref{gioml-90}, $X$ satisfies condition $(Impl)$, hence it is an implicative-orthomodular lattice. Conversely, if $X$ is an implicative-orthomodular lattice, it satisfies $(QW_2)$ and $(Impl)$. 
Since by Lemma \ref{iol-30}, condition $(Impl)$ implies condition $(iG)$, it follows that $X$ is an implicative-orthomodular softlattice.  
\end{proof}

\begin{lemma} \label{gioml-150} Let $(X,\ra,^*,1)$ be an involutive BE algebra satisfying axiom $(QW_2)$.  
The following hold for all $x,y,z\in X$: \\ 
$(1)$ $(y\ra (x\ra y^*)^*)\ra (z\ra (x\ra y^*)^*)=z\ra y;$ \\     %(64)
$(2)$ $((x\ra y)\ra y)\ra ((x\ra y)\ra z)=y\ra z;$ \\  
$(3)$ $(x\ra (y\ra z^*))\ra (x\ra (y\ra (x\ra (y\ra z^*))^*))^*=(x\ra y^*)^*;$ \\  %(65)
$(4)$ $(x\ra y^*)\ra (z\ra (x\ra (x\ra y^*)^*)^*)=z\ra x;$ \\  %(66)
$(5)$ $((y\ra z^*)\ra (x\ra (y\ra (y\ra z^*)^*)^*))\ra $ \\
$\hspace*{1.50cm}$ $((y\ra z^*)\ra (x\ra ((y\ra z^*)\ra (x\ra (y\ra (y\ra z^*)^*))^*)))^*
                            =((y\ra z^*)\ra x^*)^*$. \\   %(68)
$(6)$ $(y\ra z^*)\ra (x\ra (y\ra (y\ra z^*)^*)^*)=x\ra y$. \\  %(69)
$(7)$ $ (x\ra y)\ra ((y\ra z^*)\ra (x\ra (x\ra y)^*))^*=((y\ra z^*)\ra x^*)^*$. %(67)
\end{lemma}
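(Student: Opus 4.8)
The plan is to exploit the fact that axiom $(QW_2)$, on its own, already forces the orthomodular laws, so that the whole machinery of Propositions \ref{ioml-40} and \ref{ioml-50} becomes available even without assuming $(Impl)$. Indeed, putting $y:=0$ in $(QW_2)$ and using $0\Cap w=0$ (Proposition \ref{qbe-20}$(6)$) gives $x^*=x^*\Cap(x\ra z)$, and replacing $x,z$ by $x^*,y$ yields $(IOM^{'})$ exactly as in the proof of Theorem \ref{ioml-60}$(a)\Rightarrow(b)$, but using only $(QW_2)$. Hence by Lemma \ref{ioml-30} all three of $(IOM)$, $(IOM^{'})$, $(IOM^{''})$ hold, so every conclusion of Propositions \ref{ioml-40} and \ref{ioml-50} is at my disposal; in particular $(Pom)$, i.e. $x\Cup(x\odot y)=x$, holds, since $(IOM^{''})$ reads $x\Cup(x\odot y^*)=x$ after noting $(x\ra y)^*=x\odot y^*$. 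The recurring tools will be Lemma \ref{qbe-10}$(3),(6),(7),(8)$, the exchange law $(BE_4)$, the transfer identities Proposition \ref{qbe-20}$(7),(8)$ and Proposition \ref{qbe-30}$(8)$ (namely $(x\ra(x\ra y)^*)^*=y\Cap x$), together with Proposition \ref{ioml-50}$(1),(2)$.

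For the ``single-collapse'' identities $(1)$--$(4)$ and $(6)$ the uniform strategy is to rewrite each nested implication in terms of $\Cup,\Cap,\odot$ and then kill one factor by an orthomodular absorption. For $(1)$, writing $w=(x\ra y^*)^*=x\odot y$ and applying Proposition \ref{qbe-20}$(8)$ turns the left side into $z\ra(y\Cup w)$, whereupon $(Pom)$ gives $y\Cup(y\odot x)=y$, so the left side is $z\ra y$. For $(6)$, Proposition \ref{qbe-30}$(8)$ collapses the subterm $(y\ra(y\ra z^*)^*)^*$ to $z^*\Cap y$, and after one use of $(BE_4)$ it remains to show $(y\ra z^*)\ra(z^*\Cap y)=y$; this follows from $z^*\Cap y=(z\Cup y^*)^*$ (Proposition \ref{qbe-20}$(3)$) and $(z\Cup y^*)\ra(z\ra y^*)^*=y$ (Proposition \ref{ioml-50}$(2)$). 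Identity $(4)$ is the exact mirror of $(6)$ with $v=x\ra y^*$ in place of $u$, and $(2)$ reduces, via Lemma \ref{qbe-10}$(3),(6)$ and Proposition \ref{qbe-30}$(8)$, to $(y\Cap(x\ra y))\ra z=y\ra z$, which holds because $(IOM)$ gives $y\Cap(x\ra y)=y$. Identity $(3)$ folds its left side into $m\Cup s^{*}$ with $m=(x\ra y^*)^*$ and $s^{*}=m\odot z$ (using Lemma \ref{qbe-10}$(8)$ to see $x\ra(y\ra z^*)=m\ra z^*$), so again $(Pom)$ makes the $z$ disappear and leaves $m=(x\ra y^*)^*$.

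Identity $(7)$ needs two successive collapses. First Proposition \ref{qbe-30}$(8)$ rewrites $x\ra(x\ra y)^*$ as $(y\Cap x)^*$; then Proposition \ref{qbe-20}$(7)$ turns $(y\Cap x)\ra u^{*}$ into $(x\ra y)\ra(x\ra u^{*})$, and after Lemma \ref{qbe-10}$(3)$ and a second application of Proposition \ref{qbe-30}$(8)$ the left side becomes $((x\ra u^{*})\Cap(x\ra y))^{*}$, where $u=y\ra z^*$. The proof is finished by showing $x\ra u^{*}\le_Q x\ra y$: since $u^{*}=y\odot z$ and $(y\odot z)\Cap y=(y\ra(z^*\Cap y))^*=(y\ra z^*)^*=y\odot z$ by Proposition \ref{ioml-50}$(1)$ (so $y\odot z\le_Q y$), Proposition \ref{ioml-40}$(3)$ gives $x\ra u^{*}\le_Q x\ra y$, collapsing the meet and leaving $(x\ra u^{*})^{*}=(u\ra x^*)^{*}$, which is the right-hand side by Lemma \ref{qbe-10}$(3)$.

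The delicate case is $(5)$, and I intend to reduce it to the two identities already in hand rather than expand it head-on. Its first factor is literally the left-hand side of $(6)$, hence equals $x\ra y$, so $(5)$ takes the form $(x\ra y)\ra B^{*}=(u\ra x^*)^{*}$ with $u=y\ra z^*$; this is precisely the shape of $(7)$, so it suffices to show that the inner factor $B$ collapses to $u\ra(x\ra(x\ra y)^{*})$, after which $(5)$ is just $(7)$. I expect this last reduction to be the main obstacle: substituting $y\ra u^{*}=(z^*\Cap y)^{*}$ and repeatedly applying Lemma \ref{qbe-10}$(3),(6)$, $(BE_4)$, and Proposition \ref{qbe-30}$(8)$ turns the triply nested subterm into an expression of the form $(\,(x\ra y^*)\Cap u\,)^{*}$, and one must then discharge it using the meet-absorption $(y\ra z^*)\ra(z^*\Cap y)=y$ established for $(6)$ together with Proposition \ref{qbe-20}$(7),(8)$. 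Throughout $(5)$ one must be careful that $\Cap$ and $\Cup$ are not assumed commutative, so each collapse has to cite the correct one of $(IOM)$, $(IOM^{'})$, $(IOM^{''})$ or the matching part of Proposition \ref{ioml-50}, and the bookkeeping of which argument plays which role is where the real work lies.
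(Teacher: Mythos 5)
Most of your proposal is sound, and in places it improves on the paper. Your opening reduction is correct: putting $y:=0$ in $(QW_2)$ and using Proposition \ref{qbe-20}$(6)$ yields $(IOM^{'})$ without any appeal to $(Impl)$, so all of Propositions \ref{ioml-40} and \ref{ioml-50} are legitimately available — this is exactly what the paper's own proof tacitly does when it invokes $(IOM^{''})$. Items $(1)$ and $(3)$ are the paper's computations repackaged through $\Cup$ and $\odot$; for $(2)$ you use $(IOM)$ together with Proposition \ref{qbe-20}$(7)$ (which is the operative fact, not Proposition \ref{qbe-30}$(8)$ as you cite) instead of the paper's negation-swap of $(1)$ — both work. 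You prove $(6)$ directly via $z^*\Cap y=(z\Cup y^*)^*$ and Proposition \ref{ioml-50}$(2)$ and obtain $(4)$ by renaming; the paper runs this pair in the opposite order, proving $(4)$ first. Your proof of $(7)$ is genuinely different and cleaner than the paper's: the paper extracts $(7)$ from $(5)$ and $(6)$, while you collapse the left side to $((x\ra u^*)\Cap(x\ra y))^*$ with $u=y\ra z^*$ via Proposition \ref{qbe-30}$(8)$ twice and Proposition \ref{qbe-20}$(7)$, then establish $u^*\le_Q y$ from Proposition \ref{ioml-50}$(1)$ and finish with the monotonicity of Proposition \ref{ioml-40}$(3)$; every step checks out, and it is independent of $(5)$, so your inverted architecture is not circular.

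The genuine gap is $(5)$, precisely where you hedge. You defer the claim that the inner factor $B$ collapses to $u\ra(x\ra(x\ra y)^*)$, and against the star placement as typeset that claim is false, not merely hard: in Example \ref{gioml-210} take $x=d$, $y=a$, $z=a$, so $u=y\ra z^*=d$; then the inner subterm $u\ra(x\ra(y\ra u^*))^*$ evaluates to $a$, whereas $(x\ra y)^*=d$, giving $B=a$ while $u\ra(x\ra(x\ra y)^*)=1$ — the two sides of $(5)$ happen to agree at this point, but not through your reduction, so your proposed route cannot be completed as stated. What you missed is the one-line observation the paper uses: $(5)$ is nothing but $(3)$ under the substitution $x\mapsto y\ra z^*$, $y\mapsto x$, $z\mapsto y\ra(y\ra z^*)^*$, and under that substitution the inner subterm of $(5)$ is \emph{verbatim} the first factor, so $(6)$ collapses both occurrences simultaneously and $(5)$ turns into exactly $(7)$ with no further computation. (Carrying out the substitution shows the typeset $(5)$ has one $^*$ sitting on $(x\ra (y\ra (y\ra z^*)^*))$ rather than on the inner $(y\ra (y\ra z^*)^*)$; read against the paper's proof this is a typo, and your attempt to attack the literal version head-on is exactly why your plan stalls.) With that substitution remark in place of your speculative collapse, your proof of the lemma closes.
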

\begin{proof}
Let $(X,\ra,^*,1)$ be an involutive BE algebra satisfying axiom $(QW_2)$. \\
$(1)$ We have succesively: \\ 
$\hspace*{2cm}$ $y\Cup (y\ra x^*)^*=y$ (by $(IOM^{''})$). \\
$\hspace*{2cm}$ $y\Cup (x\ra y^*)^*=y$ (Lemma \ref{qbe-10}$(6)$) \\
$\hspace*{2cm}$ $z\ra (y\Cup (x\ra y^*)^*)=z\ra y$. \\
$\hspace*{2cm}$ $(y\ra (x\ra y^*)^*)\ra (z\ra (x\ra y^*)^*)=z\ra y$ (by $(BE_4)$). \\
$(2)$ From $(1)$, using Lemma \ref{qbe-10}$(3)$, we get $((x\ra y^*)\ra y^*)\ra ((x\ra y^*)\ra z^*)=y^*\ra z^*$. 
Replacing $y$ by $y^*$ and $z$ by $z^*$, it follows that $((x\ra y)\ra y)\ra ((x\ra y)\ra z)=y\ra z$. \\
$(3)$ We have succesively: \\
$\hspace*{2.00cm}$ $(x\ra y^*)^*\Cup ((x\ra y^*)^*\ra z^*)^*=(x\ra y^*)^*$ (by $(IOM^{''})$). \\
$\hspace*{2.00cm}$ $((x\ra y^*)^*\ra ((x\ra y^*)^*\ra z^*)^*)\ra ((x\ra y^*)^*\ra z^*)^*=(x\ra y^*)^*$. \\
$\hspace*{2.00cm}$ $((x\ra y^*)^*\ra z^*)\ra ((x\ra y^*)^*\ra ((x\ra y^*)^*\ra z^*)^*)^*=(x\ra y^*)^*$ \\ 
                                                          $\hspace*{10.00cm}$ (Lemma \ref{qbe-10}$(3)$). \\
$\hspace*{2.00cm}$ $(x\ra (y\ra z^*))\ra (x\ra (y\ra (x\ra (y\ra z^*))^*))^*=(x\ra y^*)^*$ \\
                                                          $\hspace*{10.00cm}$ (Lemma \ref{qbe-10}$(7)$). \\
$(4)$ Following the method used in $(3)$, we get succesively: \\
$\hspace*{2.00cm}$ $x\Cup (x\ra y^*)^*=x$ ($(IOM^{''})$). \\ 
$\hspace*{2.00cm}$ $z\ra (x\Cup (x\ra y^*)^*)=z\ra x$. \\
$\hspace*{2.00cm}$ $z\ra ((x\ra (x\ra y^*)^*)\ra (x\ra y^*)^*)=z\ra x$. \\
$\hspace*{2.00cm}$ $z\ra ((x\ra y^*)\ra (x\ra (x\ra y^*)^*)^*)=z\ra x$ (Lemma \ref{qbe-10}$(5)$). \\ 
$\hspace*{2.00cm}$ $(x\ra y^*)\ra (z\ra (x\ra (x\ra y^*)^*)^*)=z\ra x$ (by $(BE_4)$). \\
$(5)$ It follows from $(3)$, replacing $x$ by $y\ra z^*$, $y$ by $x$ and $z$ by $y\ra (y\ra z^*)^*$. \\ 
$(6)$ It follows from $(4)$, replacing $x$ by $y$, $y$ by $z$ and $z$ by $x$. \\
$(7)$ It follows from $(5)$ taking into consideration $(6)$. 
\end{proof}

\begin{lemma} \label{gioml-160} Let $(X,\ra,^*,1)$ be an involutive BE algebra satisfying axioms $(QW_2)$ and  $(Iabs$-$i)$. The following hold for all $x,y,z\in X$: \\ 
$(1)$ $((x\ra y^*)\ra (z\ra y))\ra (x\ra y^*)=x\ra y^*;$ \\            %(73)$
$(2)$ $x\ra (y\ra (y^*\ra ((x\ra y^*)\ra z^*))^*)=x\ra y^*;$ \\        %(71) 
$(3)$ $x\ra (y^*\ra (y\ra ((x\ra y)\ra ((y\ra z)^*\ra (x\ra (x\ra y)^*)))))=x\ra y;$ \\  %(75)
$(4)$ $x\ra (y^*\ra (y\ra ((y\ra z^*)\ra x^*)^*)^*)=x\ra y;$ \\         %(74)   
$(5)$ $(x\ra (x\ra y^*)^*)\ra (y^*\ra (y\ra ((y\ra x^*)\ra (x\ra (x\ra y^*)^*)^*)^*)^*)$ \\
      $\hspace*{3.50cm}$ $=(x\ra (x\ra y^*))\ra y;$ \\                %(77)
$(6)$ $(x\ra (x\ra y^*)^*)\ra (y^*\ra (y\ra x^*)^*)=(x\ra (x\ra y^*)^*)\ra y$.  %(79)              
\end{lemma}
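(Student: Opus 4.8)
The plan is to treat part (1) as the base identity and to deduce (2)--(6) from it together with Lemma \ref{gioml-150} by substitution and contraposition, exactly in the style in which Lemma \ref{gioml-150}$(5),(6),(7)$ were obtained from $(3),(4)$. Throughout I may use that $(QW_2)$ yields the equivalent conditions $(IOM)$, $(IOM^{'})$, $(IOM^{''})$ (Theorem \ref{ioml-60}) and all of Lemma \ref{gioml-150}, together with the exchange law $(BE_4)$ and the contraposition laws of Lemma \ref{qbe-10}$(3),(5),(6)$. The genuinely new ingredient, supplied by the widelattice hypothesis, is $(Iabs$-$i)$.

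For part (1), writing $u:=x\ra y^*$, the first move is to rewrite the second argument: Lemma \ref{gioml-150}$(1)$ (with $(x\ra y^*)^*=u^*$ and Lemma \ref{qbe-10}$(3)$) gives $z\ra y=(u\ra y^*)\ra (u\ra z^*)$, whence by $(BE_4)$
\[
u\ra (z\ra y)=(u\ra y^*)\ra \bigl(u\ra (u\ra z^*)\bigr).
\]
Now I apply $(Iabs$-$i)$ in the instance $\bigl(u\ra (u\ra z^*)\bigr)\ra u=u$; setting $t:=u\ra (u\ra z^*)$, this reads $t\ra u=u$. By $(IOM)$ one has $t\le_Q (u\ra y^*)\ra t$, so Proposition \ref{ioml-40}$(3)$ gives $\bigl((u\ra y^*)\ra t\bigr)\ra u\le_Q t\ra u=u$, while $(IOM)$ also gives $u\le_Q \bigl((u\ra y^*)\ra t\bigr)\ra u$. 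Antisymmetry of $\le_Q$ (Proposition \ref{qbe-20}$(2)$) then forces $\bigl((u\ra y^*)\ra t\bigr)\ra u=u$, which by the display is exactly $\bigl(u\ra (z\ra y)\bigr)\ra u$, the left-hand side of (1). This is the step I expect to be the crux: $(Iabs$-$i)$ only collapses the ``doubled'' block $u\ra (u\ra z^*)$, and the extra outer factor $u\ra y^*$ must be absorbed not by a further identity but by the monotonicity-plus-antisymmetry argument above.

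Parts (2)--(6) are then bookkeeping on top of (1). For (2), putting $u:=x\ra y^*$ and $s:=u\ra (z\ra y)$, the exchange law and Lemma \ref{qbe-10}$(6)$ rewrite the inner term $y^*\ra ((x\ra y^*)\ra z^*)$ as $s$, so the left-hand side becomes $x\ra (s\ra y^*)=s\ra (x\ra y^*)=s\ra u$, which equals $u=x\ra y^*$ by (1). Parts (3) and (4) are the analogues of (1)/(2) with $x\ra y$ in place of $x\ra y^*$, and I expect them the same way, now feeding Lemma \ref{gioml-150}$(6)$ (whose output is $x\ra y$) into the reduction. Part (5) should be the heavily nested instance obtained by substituting $x\mapsto x\ra (x\ra y^*)^*$ (and relabelling) into (3)/(4), and part (6) the simplification of (5) got by collapsing its inner block via Lemma \ref{gioml-150}$(4),(6)$.

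The main obstacle is not conceptual but the control of the deeply nested implications in (5) and (6): each reduction is a single application of $(BE_4)$ or of a contraposition law, and one misrouted $*$ breaks the chain. The real difficulty is therefore choosing, at each stage, the substitution into Lemma \ref{gioml-150} that exposes a $u\ra (u\ra\cdot)$ block to which the part-(1) mechanism (namely $(Iabs$-$i)$ upgraded by $(IOM)$ and antisymmetry) can be applied.
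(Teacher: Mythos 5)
Your part (1) is correct but takes a detour the paper does not need. After you rewrite the left-hand side as $((u\ra y^*)\ra t)\ra u$ with $u:=x\ra y^*$ and $t:=u\ra (u\ra z^*)$, you absorb the prefix $u\ra y^*$ by combining $(IOM)$, Proposition \ref{ioml-40}$(3)$ and antisymmetry of $\le_Q$; this is legitimate (all three tools follow from $(QW_2)$ alone), but it is not the crux you claim it is: two further applications of $(BE_4)$ turn $(u\ra y^*)\ra \bigl(u\ra (u\ra z^*)\bigr)$ into $u\ra \bigl(u\ra ((u\ra y^*)\ra z^*)\bigr)$, so the whole left-hand side is already a single instance of $(Iabs$-$i)$ with $y:=(u\ra y^*)\ra z^*$. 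That is precisely the paper's proof run in reverse --- it substitutes this value into $(Iabs$-$i)$ and then collapses the inner block with $(BE_4)$, contraposition and Lemma \ref{gioml-150}$(1)$ --- so your monotonicity-plus-antisymmetry mechanism, while sound, buys nothing over pure exchange-law bookkeeping. Your part (2) coincides with the paper's. Parts (3)--(6) are only sketched, and this is where the real gap lies: you correctly identify the overall strategy (substitution instances of (2), then collapse of an inner block via Lemma \ref{gioml-150}), and your substitution $x\mapsto x\ra (x\ra y^*)^*$ for (5) is the right one, but the paper obtains (3) from (2) by the substitution $y\mapsto y^*$ together with a specific nested choice of $z$, gets (4) from (3) via Lemma \ref{gioml-150}$(7)$, and gets (6) from (5) by collapsing the inner block $(y\ra x^*)\ra (x\ra (x\ra y^*)^*)^*$ with $(IOM^{''})$ directly rather than with Lemma \ref{gioml-150}$(4),(6)$. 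Since, as you note yourself, each of these reductions is a single $(BE_4)$ step or contraposition and one misplaced $^*$ breaks the chain, the proof is not complete until those substitutions are exhibited explicitly.
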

\begin{proof}
Let $X$ be an involutive BE algebra satisfying axioms $(QW_2)$ and $(Iabs$-$i)$. By Theorem \ref{ioml-60}, 
X verifies the equivalent conditions $(IOM)$, $(IOM^{'})$, $(IOM^{''})$. \\
$(1)$ Since by $(Iabs$-$i)$, $(x\ra (x\ra y))\ra x=x$, replacing $x$ by $x\ra y^*$ and 
$y$ by $(y\ra (x\ra y^*)^*)\ra z^*$, 
we get succesively: \\
$\hspace*{1.00cm}$ $((x\ra y^*)\ra ((x\ra y^*)\ra ((y\ra (x\ra y^*)^*)\ra z^*)))\ra (x\ra y^*)=x\ra y^*$. \\
$\hspace*{1.00cm}$ $((x\ra y^*)\ra ((y\ra (x\ra y^*)^*)\ra ((x\ra y^*)\ra z^*)))\ra (x\ra y^*)=x\ra y^*$ 
                                                               $\hspace*{10.00cm}$ (by $(BE_4)$). \\
$\hspace*{1.00cm}$ $((x\ra y^*)\ra ((y\ra (x\ra y^*)^*)\ra (z\ra (x\ra y^*))))\ra (x\ra y^*)=x\ra y^*$ 
                                                              $\hspace*{10.00cm}$ (Lemma \ref{qbe-10}$(3)$). \\
$\hspace*{1.00cm}$ $((x\ra y^*)\ra (z\ra y))\ra (x\ra y^*)=x\ra y^*$ (Lemma \ref{gioml-150}$(1)$). \\
$(2)$ We have the following: \\
$\hspace*{1.00cm}$ $((x\ra y^*)\ra (z\ra y))\ra (x\ra y^*)=x\ra y^*$ (by $(1)$). \\
$\hspace*{1.00cm}$ $(x\ra y^*)^*\ra ((x\ra y^*)\ra (z\ra y))^*$ (Lemma \ref{qbe-10}$(6))$). \\
$\hspace*{1.00cm}$ $x\ra (y\ra ((x\ra y^*)\ra (z\ra y))^*)=x\ra y^*$ (Lemma \ref{qbe-10}$(7)$). \\
$\hspace*{1.00cm}$ $x\ra (y\ra ((x\ra y^*)\ra (y^*\ra z^*))^*)=x\ra y^*$ (Lemma \ref{qbe-10}$(6)$). \\
$\hspace*{1.00cm}$ $x\ra (y\ra ((y^*\ra z^*)^*\ra (x\ra y^*)^*)^*)=x\ra y^*$ (Lemma \ref{qbe-10}$(6)$). \\ 
$\hspace*{1.00cm}$ $x\ra (y\ra (y^*\ra (z\ra (x\ra y^*)^*))^*)=x\ra y^*$ (Lemma \ref{qbe-10}$(7)$). \\
$\hspace*{1.00cm}$ $x\ra (y\ra (y^*\ra ((x\ra y^*)\ra z^*))^*)=x\ra y^*$ (Lemma \ref{qbe-10}$(3)$). \\
$(3)$ It follows from $(2)$, replacing $y$ by $y^*$ and $z$ by $(y\ra z^*)\ra (x\ra (x\ra y)^*)$. \\
$(4)$ It follows from $(3)$, applying Lemma \ref{gioml-150}$(7)$. \\
$(5)$ It follows from $(4)$, replacing $x$ by $x\ra (x\ra y^*)^*$ and $z$ by $x$. \\
$(6)$ Since by $(IOM^{''})$, $x\Cup (x\ra y^*)^*)^*=x$, we get $(x\ra (x\ra y^*)^*)\ra (x\ra y^*)^*=x$. 
Using Lemma \ref{qbe-10}$(6)$, we get $(y\ra x^*)\ra (x\ra (x\ra y^*)^*)=x$, so that $(5)$ implies $(6)$.  
\end{proof}

\begin{proposition} \label{gioml-170} Let $(X,\ra,^*,1)$ be an involutive BE algebra. 
If $X$ satisfies axioms $(QW_2)$ and $(Iabs$-$i)$, then $X$ satisfies axiom $(QW_1)$. 
\end{proposition}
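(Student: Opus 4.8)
The plan is to run the argument off Lemma \ref{gioml-160}(6), whose hypotheses ($(QW_2)$ together with $(Iabs$-$i)$) are exactly those in force here; recall also that $(QW_2)$ supplies $(IOM)$, $(IOM^{'})$, $(IOM^{''})$ by Theorem \ref{ioml-60}. The goal $(QW_1)$ is $x\ra (x\Cap y)=x\ra y$. A crucial point to keep in mind is that, in contrast with Theorem \ref{gioml-60}, the axiom $(iG)$ is \emph{not} assumed, so $X$ need not be implicative and one cannot simply route through Theorem \ref{ioml-90}.

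First I would observe that the right-hand side of the identity in Lemma \ref{gioml-160}(6) is already in $(QW_1)$-shape. By Proposition \ref{qbe-30}(8), $x\ra (x\ra y^*)^*=(y^*\Cap x)^*$, so combining Proposition \ref{qbe-20}(3) with Lemma \ref{qbe-10}(6) gives $(x\ra (x\ra y^*)^*)\ra y=(y^*\Cap x)^*\ra y=y^*\ra (y^*\Cap x)$, which is precisely the left side of $(QW_1)$ instantiated at $(y^*,x)$. Hence it suffices to prove that the left-hand side of Lemma \ref{gioml-160}(6) collapses to $y^*\ra x$ (equivalently $x^*\ra y$). For that left-hand side I would first simplify with Lemma \ref{qbe-10}(3),(6), rewriting $x\ra (x\ra y^*)^*$ as $(y\ra x^*)\ra x^*=y\Cup x^*$ and $y^*\ra (y\ra x^*)^*$ as $(y\ra x^*)\ra y$, and then apply $(BE_4)$ and feed in the conditions $(IOM)$/$(IOM^{''})$ through their packaged consequences in Lemma \ref{gioml-150} and Proposition \ref{ioml-50}, so as to reduce $(y\Cup x^*)\ra ((y\ra x^*)\ra y)$ to $y^*\ra x$.

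The main obstacle is the non-commutativity of $\Cap$. From $(QW_2)$ one obtains for free only the \emph{reversed} law $x\ra (y\Cap x)=x\ra y$ of Proposition \ref{ioml-50}(1), whereas $(QW_1)$ demands $x\ra (x\Cap y)$; bridging this order-swap is exactly what the technical chain of Lemmas \ref{gioml-150}--\ref{gioml-160} is built to do, and it is the one place where $(Iabs$-$i)$ is genuinely consumed. The delicate step is injecting $(IOM)$ at the single non-reversible point of the computation: the manipulations above are reversible identities, so Lemma \ref{gioml-160}(6) on its own is equivalent merely to the weaker fixed-point relation $(y\ra x^*)\ra W=W$ with $W=(y\Cup x^*)\ra y$ (this is the form obtained after applying $(BE_4)$ to the simplified left-hand side). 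Thus $(IOM)$ must do real work to upgrade this fixed-point equation to the equality $W=x^*\ra y$ that yields $(QW_1)$; verifying that exactly this upgrade is available without $(iG)$ is where I expect the effort to concentrate.
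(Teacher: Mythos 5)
Your plan is correct and follows essentially the same route as the paper's proof: both hinge on Lemma \ref{gioml-160}$(6)$, use $(BE_4)$ to regroup its left-hand side so that the inner term becomes $x\Cup (x\ra y^*)^*$, collapse that to $x$ via $(IOM^{''})$ (available from $(QW_2)$ by Theorem \ref{ioml-60}), and then read off $(QW_1)$ instantiated at $(y^*,x)$. The only difference is cosmetic: you identify the right-hand side as $y^*\ra (y^*\Cap x)$ up front via Proposition \ref{qbe-30}$(8)$, whereas the paper reaches the same conclusion by a variable substitution at the end.
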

\begin{proof}
Let $(X,\ra,^*,1)$ be an involutive BE algebra satisfying axioms $(QW_2)$ and $(Iabs$-$i)$, and let $x,y\in X$. 
We have the following equivalences: \\
$\hspace*{2.00cm}$ $(x\ra (x\ra y^*)^*)\ra (y^*\ra (y\ra x^*)^*)=(x\ra (x\ra y^*)^*)\ra y$ 
                                                   (Lemma \ref{gioml-160}$(6)$). \\ 
$\hspace*{2.00cm}$ $y^*\ra ((x\ra (x\ra y^*)^*)\ra (x\ra y^*)^*)=(x\ra (x\ra y^*)^*)\ra y$ (by $(BE_4)$). \\
$\hspace*{2.00cm}$ $y^*\ra (x\Cup (x\ra y^*)^*)=(x\ra (x\ra y^*)^*\ra y$. \\                                 
$\hspace*{2.00cm}$ $y^*\ra x=(x\ra (x\ra y^*)^*\ra y$ (by $(IOM^{''})$). \\
$\hspace*{2.00cm}$ $y^*\ra (x\ra (x\ra y^*)^*)^*=y^*\ra x$ (Lemma \ref{qbe-10}$(3)$). \\
$\hspace*{2.00cm}$ $x\ra (y\ra (y\ra x)^*)^*=x\ra y$ (replacing $y$ by $x^*$ and $x$ by $y$). \\
$\hspace*{2.00cm}$ $x\ra ((x^*\ra y^*)\ra y^*)^*=x\ra y$ (Lemma \ref{qbe-10}$(6)$). \\
$\hspace*{2.00cm}$ $x\ra (x\Cap y)=x\ra y$. \\
Hence $X$ satisfies axiom $(QW_1)$.  
\end{proof}

\begin{theorem} \label{gioml-180} 
The implicative-orthomodular widelattices coincide to quantum-Wajsberg algebras verifying $(Iabs$-$i)$.  
\end{theorem}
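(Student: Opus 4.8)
The plan is to establish the asserted coincidence of classes as a pair of set-theoretic inclusions, after unwinding the two definitions into the primitive axioms $(QW_1)$ and $(QW_2)$. By Definition \ref{gioml-20}, an implicative-orthomodular widelattice is exactly an involutive BE algebra satisfying $(QW_2)$ together with $(Iabs$-$i)$; and a quantum-Wajsberg algebra is an involutive BE algebra satisfying $(QW)$, which is equivalent to the conjunction of $(QW_1)$ and $(QW_2)$. So the theorem reduces to showing that, for an involutive BE algebra already satisfying $(Iabs$-$i)$, condition $(QW_2)$ alone is equivalent to the pair $\{(QW_1),(QW_2)\}$.

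For the inclusion that every implicative-orthomodular widelattice is a quantum-Wajsberg algebra verifying $(Iabs$-$i)$, I would take $X$ satisfying $(QW_2)$ and $(Iabs$-$i)$ and apply Proposition \ref{gioml-170}, which gives that $X$ also satisfies $(QW_1)$. Then $X$ satisfies both $(QW_1)$ and $(QW_2)$, hence $(QW)$, so $X$ is a quantum-Wajsberg algebra; and it still verifies $(Iabs$-$i)$ by hypothesis, as required.

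The converse inclusion is immediate: a quantum-Wajsberg algebra verifying $(Iabs$-$i)$ satisfies $(QW)$, hence in particular $(QW_2)$, and it verifies $(Iabs$-$i)$, so by Definition \ref{gioml-20} it is an implicative-orthomodular widelattice. All the genuine work has already been carried out in Proposition \ref{gioml-170}, whose proof rests on the technical identities of Lemmas \ref{gioml-150} and \ref{gioml-160}; the present theorem is then only a matter of matching axioms to class definitions, so I do not expect any real obstacle here beyond that bookkeeping.
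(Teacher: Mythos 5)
Your proposal is correct and follows essentially the same route as the paper: both directions reduce to the definitions, with Proposition \ref{gioml-170} supplying $(QW_1)$ in the forward direction and the converse being immediate from $(QW)\Rightarrow(QW_2)$. No gaps.
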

\begin{proof}
By definition, an implicative-orthomodular widelattice $X$ is an involutive BE algebra verifying axioms $(QW_2)$ and $(Iabs$-$i)$. According to Proposition \ref{gioml-170}, $X$ satisfies axiom $(QW_1)$, so that it is a 
quantum-Wajsberg algebra satisfying $(Iabs$-$i)$. 
Conversely, a quantum-Wajsberg algebra satisfying $(Iabs$-$i)$ verifies $(QW_2)$, so that it is an implicative-orthomodular widelattice. 
\end{proof} 

\begin{corollary} \label{gioml-190} 
Since by \cite{Ciu78} a quantum-Wajsberg algebra satisfies condition $(QW_3)$, it follows that 
any implicative-orthomodular widelattice is a meta-Wajsberg algebra verifying $(Iabs$-$i)$.  
\end{corollary}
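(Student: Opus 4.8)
The plan is to read the statement off directly from Theorem \ref{gioml-180} together with the known behavior of quantum-Wajsberg algebras, so the argument is a short chain of implications rather than a computation. First I would recall that, by Definition \ref{gioml-20}, an implicative-orthomodular widelattice $X$ is an involutive BE algebra satisfying axioms $(QW_2)$ and $(Iabs$-$i)$. By Theorem \ref{gioml-180}, such an $X$ is precisely a quantum-Wajsberg algebra that in addition verifies $(Iabs$-$i)$. This is the step that carries all the weight, since it upgrades the weaker pair $\{(QW_2),(Iabs$-$i)\}$ to the full quantum-Wajsberg structure (the key input being Proposition \ref{gioml-170}, which supplies $(QW_1)$ and hence $(QW)$).

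Next I would invoke the cited fact from \cite{Ciu78} that every quantum-Wajsberg algebra satisfies $(QW_3)$, i.e. $(x\Cap y)\ra (y\Cap x)=1$ for all $x,y\in X$. Since $X$ is a quantum-Wajsberg algebra, it therefore satisfies $(QW_3)$, which is exactly the defining condition of a meta-Wajsberg algebra. Because $X$ still satisfies $(Iabs$-$i)$ by hypothesis, I would conclude that $X$ is a meta-Wajsberg algebra verifying $(Iabs$-$i)$, which is the desired statement.

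There is no genuine obstacle at the level of this corollary: it is a formal consequence of Theorem \ref{gioml-180} and the $(QW_3)$-property of quantum-Wajsberg algebras, in the same way that Proposition \ref{ioml-110} and Corollary \ref{ioml-100} are drawn from the quantum-Wajsberg structure. The only point requiring care is to retain $(Iabs$-$i)$ explicitly in the conclusion, so that the class obtained is the meta-Wajsberg counterpart of the widelattice restriction rather than the whole class $\mathbf{metaW}$.
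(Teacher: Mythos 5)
Your proposal is correct and follows exactly the route the paper intends: Theorem \ref{gioml-180} identifies implicative-orthomodular widelattices with quantum-Wajsberg algebras satisfying $(Iabs$-$i)$, and the cited $(QW_3)$-property of quantum-Wajsberg algebras from \cite{Ciu78} then yields the meta-Wajsberg conclusion. No gaps.
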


\begin{corollary} \label{gioml-200}
Denoting by $\mathbf{IOMWL}$ the class of implicative-orthomodular widelattices, we have: \\
$\hspace*{4.00cm}$ $\mathbf{IOMWL}=\mathbf{QW}\bigcap \mathbf{IOWL}$ and \\
$\hspace*{4.00cm}$ $\mathbf{IOMWL}\subset \mathbf{metaW}$. 
\end{corollary}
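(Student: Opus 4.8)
The plan is to obtain both assertions directly from the structural results already established in this section, so that no fresh computation with the operations $\ra$, $\Cap$, $\Cup$ is needed.

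First I would prove the equality $\mathbf{IOMWL}=\mathbf{QW}\cap\mathbf{IOWL}$ by unwinding the three class definitions. By Definition \ref{iol-90}, an involutive BE algebra belongs to $\mathbf{IOWL}$ exactly when it satisfies $(Iabs$-$i)$, while $\mathbf{QW}$ is by definition the class of quantum-Wajsberg algebras; hence $\mathbf{QW}\cap\mathbf{IOWL}$ is precisely the class of quantum-Wajsberg algebras that also satisfy $(Iabs$-$i)$. But Theorem \ref{gioml-180} characterizes the implicative-orthomodular widelattices as exactly the quantum-Wajsberg algebras verifying $(Iabs$-$i)$. Matching these two descriptions yields the equality at once.

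For the inclusion $\mathbf{IOMWL}\subseteq\mathbf{metaW}$ I would simply invoke Corollary \ref{gioml-190}, which already records that every implicative-orthomodular widelattice is a meta-Wajsberg algebra. If the symbol $\subset$ is read as proper inclusion (as with $\mathbf{preW}\subset\mathbf{metaW}$ and $\mathbf{IOML}\subset\mathbf{metaW}$ earlier), then I would also supply a witness separating the two classes. Here I would exploit the fact, recalled in the Preliminaries, that every quantum-Wajsberg algebra satisfies $(QW_3)$ and hence lies in $\mathbf{metaW}$; consequently any quantum-Wajsberg algebra that fails $(Iabs$-$i)$ sits in $\mathbf{metaW}$ but, lacking $(Iabs$-$i)$, cannot lie in $\mathbf{IOWL}$ and therefore not in $\mathbf{IOMWL}=\mathbf{QW}\cap\mathbf{IOWL}$.

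The formal part of the argument is routine, being a matter of composing Theorem \ref{gioml-180} with the class definitions and Corollary \ref{gioml-190}. The only genuine obstacle I anticipate is the strictness of the inclusion: it would require exhibiting a concrete quantum-Wajsberg (equivalently meta-Wajsberg) algebra in which $(Iabs$-$i)$ fails, and verifying that failure in a small finite model of the kind appearing in Examples \ref{ioml-130} and \ref{ioml-210}.
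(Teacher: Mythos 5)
Your proposal is correct and follows essentially the same route the paper intends: the equality $\mathbf{IOMWL}=\mathbf{QW}\bigcap \mathbf{IOWL}$ is read off directly from Theorem \ref{gioml-180} together with the definition of $\mathbf{IOWL}$ via $(Iabs$-$i)$, and the inclusion in $\mathbf{metaW}$ is exactly Corollary \ref{gioml-190}. Your remark that strictness of the inclusion would additionally require a concrete meta-Wajsberg (e.g.\ quantum-Wajsberg) algebra failing $(Iabs$-$i)$ is a fair observation, since the paper does not exhibit such a witness either.
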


The nexe example follows from \cite[Ex. 13.3.3]{Ior35}. 

\begin{example} \label{gioml-210} %\emph{(see \cite[Ex. 13.3.3]{Ior35})}
Let $X=\{0,a,b,c,d,1\}$ and let $(X,\ra,^*,1)$ be the involutive BE algebra 
with $\ra$ and the corresponding operation $\Cap$ given in the following tables:  
\[
\begin{array}{c|ccccccc}
\ra & 0 & a & b & c & d & 1 \\ \hline
0   & 1 & 1 & 1 & 1 & 1 & 1 \\ 
a   & d & 1 & 1 & 1 & d & 1 \\ 
b   & c & 1 & 1 & 1 & 1 & 1 \\ 
c   & b & 1 & 1 & 1 & 1 & 1 \\
d   & a & a & 1 & 1 & 1 & 1 \\
1   & 0 & a & b & c & d & 1
\end{array}
\hspace{10mm}
\begin{array}{c|ccccccc}
\Cap & 0 & a & b & c & d & 1 \\ \hline
0    & 0 & 0 & 0 & 0 & 0 & 0 \\ 
a    & 0 & a & b & c & 0 & a \\ 
b    & 0 & a & b & c & d & b \\ 
c    & 0 & a & b & c & d & c \\
d    & 0 & 0 & b & c & d & d \\
1    & 0 & a & b & c & d & 1
\end{array}
.
\]

We can check that $X$ is an implicative-orthomodular widelattice. 
Since $(b\ra 0)\ra b=1\neq b$, it follows that axiom $(Impl)$ is not verified, so that $X$ is not an 
implicative-orthomodular lattice. 
\end{example}

$\vspace*{5mm}$

\section{Concluding remarks}

We introduced and studied in \cite{Ciu78, Ciu79, Ciu80} the quantum-Wajsberg algebras as implicative counterpart 
of quantum-MV algebras defined by R. Giuntini in \cite{Giunt2}. 
In the papers \cite{Ciu81, Ciu82}, we gave generalizations of quantum-Wajsberg algebras:  
implicative-orthomodular, pre-Wajsberg and meta-Wajsberg algebras. \\
In this paper, we introduce and study certain subclasses of quantum-Wajsberg algebras.  
We redefine the orthomodular lattices, by introducing the notion of implicative-orthomodular lattices, giving 
certain characterizations of these structures and proving that they are quantum-Wajsberg algebras. 
We also define and study the implicative-modular algebras and we prove that they are implicative-orthomodular lattices. 
We also introduce the implicative-orthomodular softlattices and widelattices as generalizations of implicative-orthomodular lattices. 
Finally,we prove that the implicative-orthomodular softlattices are equivalent to implicative-orthomodular lattices, and that  the implicative-orthomodular widelattices are special cases of quantum-Wajsberg algebras. \\
As we mentioned, according to \cite[Th. 2.3.9]{DvPu} every orthomodular lattice determines a quantum-MV 
algebra by taking $\oplus$ as the supremum $\vee$ and $^*$ as the orthocomplement $^{'}$. 
In the proof of this result, the authors used the Sasaki projection and the Foulis-Holland theorem for orthomodular 
lattices. 
As a future topic of research, the Sasaki projection and the Foulis-Holland theorem could be reformulated for 
the case of implicative-orthomodular lattices, and the above mentioned theorem could be adapted accordingly.

%$\vspace*{1mm}$
          
%\begin{center}
%\sc Acknowledgement 
%\end{center}
%The author is very grateful to the anonymous referees for their useful remarks and suggestions on the subject that %helped improving the presentation.

$\vspace*{1mm}$

\end{document}